\DeclareMathOperator{\Range}{R} 
\newtheorem{theorem}{Theorem}[section]
\newtheorem{lemma}[theorem]{Lemma}
\newtheorem{proposition}[theorem]{Proposition}
\theoremstyle{definition}
\newtheorem{definition}[theorem]{Definition}
\theoremstyle{remark}
\newtheorem{remark}[theorem]{Remark}
\numberwithin{equation}{section}
\newcommand{\ba}{\begin{array}}
	\newcommand{\ea}{\end{array}}
\newcommand{\la}{\lambda}
\newcommand{\ds}{\displaystyle}
\begin{document}
\date{}
\title{ \bf\large{Global dynamics in a reaction-diffusion competition model with edge behavior}}
\author{
Kuiyue Liu\; and\; Shanshan Chen\footnote{Corresponding author, Email: chenss@hit.edu.cn}\\[-1mm]
{\small Department of Mathematics, Harbin Institute of Technology}\\[-2mm]
{\small Weihai, Shandong 264209, P. R. China}\\[1mm]
}

\maketitle
\begin{abstract}
In this paper, we investigate a two-species competition model in a landscape consisting of a finite number of adjacent patches. For the two-patch scenario, by treating edge behavior at the interface as a strategy, it has been shown that there exists an ideal free distribution (IFD) strategy, which is a globally evolutionarily stable strategy. Specifically, when the resident species follows the IFD strategy and the mutant species does not, the mutant species is unable to invade the resident population. Building on this foundation, our work focuses on exploring the dynamics of the system when neither species can adopt the IFD strategy.
 We demonstrate that if the strategies of both species either exceed or fall below the IFD strategy, the mutant species can outcompete and eliminate the resident species, provided that its strategy is closer to the IFD strategy and its diffusion rates are equal to or slower than those of the resident species.
Furthermore, if the strategies of the two species lie on opposite sides of the IFD strategy, the two species can coexist. This result is further extended to the case of an arbitrary but finite number of patches.
\\[2mm]
\noindent {\bf Keywords}: global dynamics; invasion analysis; reaction-diffusion; edge behavior \\[2mm]
\noindent {\bf MSC 2020}: 35K57, 37C65, 92D25, 92D40.
\end{abstract}
\section{Introduction}
Reaction-diffusion equations are often the framework of choice to model biological invasions. For two competing species, it was demonstrated in \cite{4-Dockery1998,3-Hasting1983} that the species with a slower diffusion rate could outcompete and eliminate its faster counterpart if they are identical except for their diffusion rates, and one can also refer to \cite{5-He-Ni2016,6-LamNi2012,2-Lou-JDE-2006} and references therein for the weak competition case in spatially heterogeneous environments.

As a result of habitat fragmentation, the landscape may comprise multiple patches. By ignoring the spatial structure within each patch,
discrete patch models (systems of ordinary differential equations) can be used to model population dynamics. The researchers also addressed  questions similar to those studied in reaction-diffusion models. For instance, one can refer to
\cite{49-Chen-2022,50-Chen-2022,17-Cheng-Lin-2019,18-Gourley-Kuang-2005,3-Hasting1983,19-Lin-Lou-2014,20-Slavik-2020} and references therein for the dynamics of two competing species. 

Taking into account the spatial structure of patches, numerous experiments have illustrated that there exists edge behavior of individuals, and individuals may have movement preference at interfaces between patches \cite{23-Reeve-2010,24-Reeve-2008,21-Ries-2001,25-Schtickzelle-2003,22-Schultz-2001}.
The continuous density conditions at interfaces were first proposed in \cite{26-Pacala-1982} and \cite{27-Shigesada-1986} for finite and infinite two-patch landscapes, respectively. Further and related studies can be found in \cite{29-Cruywagen-1996,28-Freedman-1989,30-Lutscher-2006} and references therein.

The habitat preference of individuals can lead to density discontinuities at interfaces.
Building on this observation and the work of Ovaskainen and
Cornell \cite{31-Ovaskainen-2003}, Maciel and Lutscher \cite{32-Maciel-2013} first introduced the discontinuous density conditions and proposed the following single-species reaction-diffusion model with two patches:
\begin{equation}\label{single}
	\begin{cases}
		(u_1)_t=d_1(u_1)_{xx}+\ds r_1u_1(1-\frac{u_1}{k_1}),\;\;0<x<x_1,\\
		(u_2)_t=d_2(u_2)_{xx}+\ds r_2u_2(1-\frac{u_2}{k_2}),\;\;x_1<x<L,\\
		u_2(x_1^+,t)=pu_1(x_1^-,t),\; d_2(u_2)_x(x_1^+,t)=d_1(u_1)_x(x_1^-,t),\\
		(u_1)_x(0^+)=(u_2)_x(L^-)=0.
	\end{cases}
\end{equation}
(They also investigated an infinite landscape consisting of periodically alternating patches of two types.)
Here $u_i$ denotes the density of the species in patch $i$, $d_i$ denotes the diffusion coefficient in patch $i$,
and $r_i$ and $k_i$ represent the intrinsic growth rate and carrying capacity of the species, respectively. Moreover,
$p:=(d_1/d_2)[{\alpha}/{(1-\alpha)}]$,
where $\alpha$ characterizes the patch preference at the interface $x=x_1$. The density of the species is discontinuous
at the interface if $p\ne1$. Zaker et al. \cite{33-Zaker-BMB-2019} showed that edge behavior influences the shape of the steady state of model \eqref{single}, and total biomass could exceed total carrying capacity under certain conditions. Then Tchouaga and Lutscher \cite{34-TCHOUAGA-2023} studied another single-species model with Allee effect and found that bi-stability can occur, characterized by  two nonzero stable steady states. Moreover, one can  refer to \cite{37-Alqawasmeh-2019,35-Zhang-2022,44-Zhang-2024,40-Hamel-2024,38-Maciel-2015,39-Shigesada-2015} for propagation phenomena in reaction-diffusion models with edge behavior.

There are also significant findings on two-species competition models similar to \eqref{single}.
For a competition model in a one-dimensional infinite landscape, Maciel and Lutscher \cite{49-Maciel-2018} found that diffusion rate and patch preference influence competitive outcomes through muti-scale analysis and homogenization.
Then Maciel et al. \cite{36-Maciel-2020} considered the following model with $n=2$:
	\begin{subequations}\label{m1}
	\begin{numcases}{}
		(u_i)_t=d_i( u_i)_{xx}+r_i u_i\left(1-\ds\frac{u_i+v_i}{k_i}\right),\;\;\;\; x\in \Omega_i,\;t>0,\; i=1,2,\cdots,n, \label{m1-1}\\
		(v_i)_t=\hat d_i( v_i)_{xx}+r_i v_i\left(1-\ds\frac{u_i+v_i}{k_i}\right),\;\; \;\; x\in \Omega_i,\;t>0, \; i=1,2,\cdots,n,\label{m1-2}\\
		u_{i+1}(x_i^{+},t)=p_i u_i(x_i^{-},t),\;d_{i+1}( u_{i+1})_{x}(x_i^{+},t)=d_i ( u_{i})_{x}(x_i^{-},t),\;\;\;\;t>0,\label{m1-3}\\
		v_{i+1}(x_i^{+},t)=\hat p_iv_i(x_i^{-},t),\;\hat d_{i+1}( v_{i+1})_{x}(x_i^{+},t)=\hat d_i ( v_{i})_{x}(x_i^{-},t),\;\;\;\;\;t>0,\label{m1-4}\\
		( u_1)_x(0^{+},t)=(u_n)_x(L^{-},t)=( v_1)_x(0^{+},t)=( v_n)_x(L^{-},t)=0,\;\;\;t>0.\label{m1-5}
	\end{numcases}
\end{subequations}
Here the landscape is finite and composed of
$n$ adjacent patches $\{\Omega_i\}_{i=1}^n$, where $\Omega_i:=(x_{i-1},x_i)$ represents the $i$-th patch with length $x_i-x_{i-1}>0$ for $i=1,\cdots,n$; $x_0=0$ and $x_n=L$ denote the boundaries of the landscape and the no-flux boundary conditions are imposed at $x=0,L$; $u_i$ and $v_i$ denote the densities of two competing species in patch $i$; and $d_i$, $\hat d_i>0$ denote the diffusion rates of the two species in patch $i$. The two species admit the same per capita growth rate
with intrinsic growth rate $r_i>0$ and carrying capacity $k_i>0$ in each patch.
At the interface $x=x_i$ ($i=1,\cdots,n-1$), the discontinuous density conditions (if $p_i\ne 1$) and balanced flux conditions are imposed for the two species, where the superscripts $\pm$  denote one-sided limits from the right and left, respectively, and $p_i,\hat p_i>0$  are  composite parameters with
\begin{equation*}
	p_i:=\dfrac{\alpha_i}{1-\alpha_i}\dfrac{d_i}{d_{i+1}},\;\;\hat p_i:=\dfrac{\hat\alpha_i}{1-\hat\alpha_i}\dfrac{\hat d_i}{\hat d_{i+1}},\;\;i=1,\cdots,n-1.
\end{equation*}
Here $\alpha_i,\hat\alpha_i\in(0,1)$ represent the patch preference of the two species at the interface $x=x_i$. Specifically, $\alpha_i$ (resp. $\hat\alpha_i$) is the probability that the species $\mathbf u$ (resp. $\mathbf v$) at the interface $x_i$ moves to patch $i+1$, and $1-\alpha_i$ (resp. $1-\hat\alpha_i$) is the probability that the species $\mathbf u$ (resp. $\mathbf v$) at the interface $x_i$ moves to patch $i$.
	Denote
	\begin{equation}\label{ok} \mathbf{\overline{k}}=\left(\dfrac{k_2}{k_1},\cdots,\dfrac{k_n}{k_{n-1}}\right),\;\;\mathbf{p}=(p_1,\cdots,p_{n-1}),\;\;\mathbf{\hat p}=(\hat p_1,\cdots,\hat p_{n-1}).
	\end{equation}
For the two-patch case ($n=2$), Maciel et al. \cite{36-Maciel-2020} showed that $\mathbf{\overline{k}}$ is an IFD strategy. Moreover, if the resident species $\mathbf u$ adopts this IFD strategy, the mutant species $\mathbf v$ with a different strategy ($\mathbf{\hat{ p}}\ne\mathbf{\overline{k}}$) cannot successfully invade. This result can be easily extended to the case where $n$ is finite but arbitrary.

A natural question arise:
what is the dynamics of model \eqref{m1} if neither $\mathbf u$ nor $\mathbf v$ can adopt the IFD strategy? In this paper, we aim to address this question.
For the two-patch scenario ($n=2$), $\mathbf{\overline{k}}={k_2}/{k_1}$, $\mathbf{p}=p_1$ and $\mathbf{\hat p}=\hat p_1$, and we show that the following results hold:
\begin{enumerate}
\item [(i)] If $p_1$ and ${\hat p_1}$ lie on the same side of IFD strategy
(i.e., $ p_1,{\hat p_1}>{k_2}/{k_1}$ or $ {k_2}/{k_1}> p_1,{\hat p_1}$), the mutant species $\mathbf v$ can outcompete and eliminate the resident species $\mathbf u$ if its strategy ${\hat p}$ is closer to  ${k_2}/{k_1}$ and its diffusion rates are are equal to or slower than those of the resident species;
\item [(ii)] If $ p_1$ and $ {\hat p_1}$ lie on opposite sides of IFD strategy (i.e., $ p_1>{k_2}/{k_1}> {\hat p_1}$ or $ {\hat p_1}>{k_2}/{k_1}> p_1$), the two species can coexist.
\end{enumerate}
The general form of this result for the case $n\ge2$ is presented in Theorem \ref{theorem3.8}.

The above results can be understood within the framework of adaptive dynamics. Detailed discussions on adaptive dynamics can be found in \cite{53-Dieckmann-1996,52-Diekmann-2003,54-Geritz-1998,lam2022,55-SmithPrice} and references therein. One key concept is a global (resp. local) evolutionarily stable strategy (ESS): a strategy is called a global (resp. local) ESS if a population adopting it cannot be invaded by a small population of mutants using a different (resp. different nearby) strategy \cite{57-Cantrell-2010,55-SmithPrice} (see Definition \ref{51}). A related concept is a global (resp. local) neighborhood invader strategy (NIS): a strategy is called a global (resp. local) NIS if a small population of mutants adopting it can always invade a population using a  different (resp. different nearby) strategy \cite{56-Apaloo,57-Cantrell-2010} (see Definition \ref{52}). Finally,
a strategy is called a local convergent stable strategy (CSS) if, for any resident strategy in its neighborhood, a small population of mutants with a strategy closer to it can always invade. Moreover, the definition of a global CSS follows by removing the neighborhood restriction (see Definition \ref{53}).

In the two-patch scenario, let the composite parameters $p_1$ and $\hat{p}_1$ be the strategies of $\mathbf{u}$ and $\mathbf{v}$, respectively, treating all other parameters as identical to isolate their effects.
The above-mentioned result in \cite{36-Maciel-2020} implies that the IFD strategy is a global ESS and NIS.
Our result (i) strengthens this by proving that the IFD strategy is also a global CSS. Moreover, result (ii) parallels findings in reaction-diffusion-advection competition models \cite{57-Cantrell-2010}, illustrating that two species can coexist when their strategies are on opposite sides of the IFD strategy.

The rest of the paper is organized as follows. Sect. 2 is devoted to the invasion analysis for model \eqref{m1}, and the local stability of the semi-trivial steady state is obtained. In Sect. 3, we studied the global dynamics of model \eqref{m1}.	For simplicity, we list some notations used throughout the paper.
For vectors $\mathbf w=(w_1,\cdots,w_n)\in\mathbb R^n$ and $\mathbf z=(z_1,\cdots,z_n)\in\mathbb R^n$, we denote
\begin{equation}\label{truct}
	[\mathbf w]_{\ell}:=(w_1,\cdots,w_{\ell}),\;\;[\mathbf w]^{\sharp}_{\ell}:=(w_{n-\ell+1},\cdots,w_{n})\;\;\text{for}\;\;1\le \ell\le n,
\end{equation}
and write
\begin{equation}\label{order1}
	\mathbf w\gg(\text{resp.}\ge)\mathbf z \;\;\text{if}\;\; w_i>(\text{resp.}\ge)z_i\;\;\text{for}\;\;i=1,\cdots,n,
\end{equation}
and $\mathbf w>\mathbf z$ if $\mathbf w\ge\mathbf z$ and $\mathbf w\neq \mathbf z$.  Denote by $(u)_x$ the derivative of $u$ with respect to $x$.

\section{Local dynamics and invasion analysis}
In each patch $\Omega_i$ ($i=1,2,\cdots,n$), we view species $u_i$ as the resident species and $v_i$ as the mutant species. It follows from Theorem \ref{apt1} in Appendix \ref{appA} that \eqref{m1} admits only two semi-trivial steady states $(\mathbf u^*,\mathbf 0)$ and $(\mathbf 0, \mathbf v^*)$, where $$\mathbf u^*=(u_1^*,\cdots,u_n^*)\in C^2(\overline{\Omega}_1)\times \cdots \times C^2(\overline{\Omega}_n)$$ is the unique positive solution of
\begin{subequations}\label{m2}
	\begin{numcases}{}
		d_i (u^*_{i})_{xx}+r_i u^*_i\left(1-\frac{u^*_i}{k_i}\right)=0, \;\;\;\;x\in \Omega_i,\;\; i=1,2,\cdots,n, \label{m2a}\\
		u^*_{i+1}(x_i^{+})=p_iu^*_i(x_i^{-}),\;\;d_{i+1}(u^*_{i+1})_x(x_i^{+})=d_i(u^*_i)_x(x_i^{-}),\label{m2b}\\
		(u^*_1)_x(0^{+})=(u^*_n)_x(L^{-})=0.\label{m2c}
	\end{numcases}
\end{subequations}

In this section, we analyze the local dynamics of model \eqref{m1} and establish a sufficient condition under which species $\mathbf v=(v_1,\cdots,v_n)$ can invade when rare. Specifically, the mutant species $\mathbf v=(v_1,\cdots,v_n)$ can (resp. cannot) invade when rare if the semi-trivial steady state $(\mathbf u^*,\mathbf 0)$ is unstable (resp. stable). The stability of $(\mathbf u^*,\mathbf 0)$ is determined by the principal eigenvalue of the following eigenvalue problem:
\begin{subequations}\label{m3}
	\begin{numcases}
		\ds \hat d_i (\phi_i)_{xx}+r_i\left(1-\dfrac{u^*_i}{k_i}\right) \phi_i=\lambda \phi_i, \;\; x\in \Omega_i,\;\; i=1,2,\cdots,n, \label{m3-a}\\
		\phi_{i+1}(x_i^{+})=\hat p_i\phi_i(x_i^{-}),\;\;\hat d_{i+1}(\phi_{i+1})_{x}(x_i^{+})=\hat d_i (\phi_{i})_{x}(x_i^{-}),\label{m3-b}\\
		(\phi_{1})_{x}(0^{+})=(\phi_{n})_{x}(L^{-})=0.\label{m3-c}
	\end{numcases}
\end{subequations}
Proposition \ref{prin-exist} in Appendix \ref{AppendixC} guarantees the existence of the principal eigenvalue  for problem \eqref{m3}.
\begin{lemma}
Let $\lambda_1 (\mathbf {\hat p},\mathbf {\hat d},\mathbf 1-\mathbf{u^*}/\mathbf k)$ be the principal eigenvalue of the eigenvalue problem \eqref{m3}.	
Then, the semi-trivial steady state $(\mathbf u^*,\mathbf 0)$ is stable if $\lambda_1 (\mathbf {\hat p},\mathbf {\hat d},\mathbf 1-\mathbf{u^*}/\mathbf k)<0$ and unstable if $\lambda_1 (\mathbf {\hat p},\mathbf{ \hat d},\mathbf 1-\mathbf{u^*}/\mathbf k )>0$.\\
(Here we abuse the notation by denoting $\mathbf 1-\mathbf{u^*}/\mathbf k:=\left(1-u_1^*/k_1,\cdots,1-u_n^*/k_n\right)$.)
\end{lemma}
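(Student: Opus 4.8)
The statement is the standard principle of linearized stability for \eqref{m1} at $(\mathbf u^*,\mathbf 0)$, so I only sketch the structure. First I would recast \eqref{m1} as an abstract semilinear evolution equation $\mathbf W_t=\mathcal A\mathbf W+\mathcal F(\mathbf W)$ on the product space $X:=\prod_{i=1}^n C(\cOm_i)\times\prod_{i=1}^n C(\cOm_i)$ (or a suitable $L^2$-based Hilbert space), where $\mathcal A=\mathrm{diag}\bigl(d_1\partial_{xx},\dots,d_n\partial_{xx},\hat d_1\partial_{xx},\dots,\hat d_n\partial_{xx}\bigr)$ has domain encoding the interface transmission conditions \eqref{m1-3}--\eqref{m1-4} and the no-flux conditions \eqref{m1-5}, and $\mathcal F$ collects the (smooth, locally Lipschitz) reaction terms. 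One checks that $\mathcal A$ is sectorial with compact resolvent: on each subinterval this is one-dimensional elliptic regularity, and the transmission couplings at the $x_i$ are of standard type, so the resolvent is compact and $\mathcal A$ generates an analytic semigroup. Consequently the principle of linearized stability applies, and $(\mathbf u^*,\mathbf 0)$ is asymptotically stable if the spectral bound $s(\mathcal L)$ of the linearization $\mathcal L$ at $(\mathbf u^*,\mathbf 0)$ is negative, and unstable if $s(\mathcal L)>0$; here the spectrum of $\mathcal L$ consists of isolated eigenvalues of finite multiplicity, so $s(\mathcal L)$ is attained.

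Second, I would exploit the triangular structure of $\mathcal L$. Writing a perturbation as $(\mathbf w,\boldsymbol\psi)$ with $\mathbf w$ in the $\mathbf u$-component and $\boldsymbol\psi$ in the $\mathbf v$-component, and using that the $\mathbf v$-equations vanish to first order in $\mathbf w$ when $\mathbf v=0$, the linearization at $(\mathbf u^*,\mathbf 0)$ reads
\begin{equation*}
\mathcal L\begin{pmatrix}\mathbf w\\ \boldsymbol\psi\end{pmatrix}=\begin{pmatrix}\mathcal L_1 & \ast\\ 0 & \mathcal L_2\end{pmatrix}\begin{pmatrix}\mathbf w\\ \boldsymbol\psi\end{pmatrix},
\end{equation*}
where $\mathcal L_2$ is precisely the operator associated with \eqref{m3}, whose principal eigenvalue is $\lambda_1(\mathbf{\hat p},\mathbf{\hat d},\mathbf 1-\mathbf{u^*}/\mathbf k)$ by Proposition \ref{prin-exist}, and $\mathcal L_1$ is the linearization of the single-species problem \eqref{m2} at $\mathbf u^*$, i.e. $(\mathcal L_1\mathbf w)_i=d_i(w_i)_{xx}+r_i\bigl(1-2u_i^*/k_i\bigr)w_i$ with the $\mathbf p$-interface conditions and no-flux conditions. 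Since the off-diagonal coupling is a bounded (zeroth-order) multiplication operator, $\sigma(\mathcal L)=\sigma(\mathcal L_1)\cup\sigma(\mathcal L_2)$, hence $s(\mathcal L)=\max\{s(\mathcal L_1),\,\lambda_1(\mathbf{\hat p},\mathbf{\hat d},\mathbf 1-\mathbf{u^*}/\mathbf k)\}$.

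Third, I would show $s(\mathcal L_1)<0$. Let $\mu_1$ be the principal eigenvalue of $\mathcal L_1$. Since $\mathbf u^*$ solves \eqref{m2}, the componentwise positive function $\mathbf u^*$ is an eigenfunction, with eigenvalue $0$, of the operator $\mathbf w\mapsto\bigl(d_i(w_i)_{xx}+r_i(1-u_i^*/k_i)w_i\bigr)_i$ subject to the same interface and boundary conditions; by the characterization of the principal eigenvalue through the existence of a positive eigenfunction, $0$ is the principal eigenvalue of this operator. Replacing the zeroth-order coefficient $r_i(1-u_i^*/k_i)$ by the strictly smaller coefficient $r_i(1-2u_i^*/k_i)=r_i(1-u_i^*/k_i)-r_iu_i^*/k_i$ and invoking the strict monotonicity of the principal eigenvalue with respect to the zeroth-order coefficient (which holds for problems of the type \eqref{m3}, cf. Proposition \ref{prin-exist}), we obtain $\mu_1<0$. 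Therefore $s(\mathcal L)=\max\{\mu_1,\,\lambda_1(\mathbf{\hat p},\mathbf{\hat d},\mathbf 1-\mathbf{u^*}/\mathbf k)\}$, which is negative when $\lambda_1(\mathbf{\hat p},\mathbf{\hat d},\mathbf 1-\mathbf{u^*}/\mathbf k)<0$ and positive when $\lambda_1(\mathbf{\hat p},\mathbf{\hat d},\mathbf 1-\mathbf{u^*}/\mathbf k)>0$; combined with the principle of linearized stability, this proves the lemma.

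The main obstacle is the first step: verifying that the discontinuous-density transmission conditions \eqref{m1-3}--\eqref{m1-4} genuinely define a sectorial operator with compact resolvent on a suitable space, so that the analytic-semigroup machinery and the principle of linearized stability are available, and that the Krein--Rutman-type theory of principal eigenvalues (existence, simplicity, positive eigenfunction, monotonicity in the zeroth-order coefficient) carries over to this interface setting. Once that functional-analytic groundwork is in place — and much of it is supplied by Proposition \ref{prin-exist} and the appendices — the block-triangular decomposition and the eigenvalue comparison are routine.
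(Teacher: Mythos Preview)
Your proposal is correct and follows essentially the same route as the paper: abstract semigroup framework plus linearized stability (supplied by Appendix~\ref{AppendixC}), block-triangular decomposition of the linearization at $(\mathbf u^*,\mathbf 0)$, and the key step that the $\mathbf u$-block $\mathcal L_1$ has negative principal eigenvalue. The only cosmetic difference is in that last step: the paper proves $\lambda_1(\mathbf p,\mathbf d,\mathbf 1-2\mathbf{u^*}/\mathbf k)<0$ by a direct weighted-integration identity (multiply \eqref{m32} by $(\prod_{\ell=i}^n p_\ell)u_i^*$, subtract the analogous identity from \eqref{m2a}, and read off the sign), whereas you invoke strict monotonicity of the principal eigenvalue in the zeroth-order coefficient together with the observation that $\mathbf u^*$ itself is a positive eigenfunction with eigenvalue $0$ for the coefficient $r_i(1-u_i^*/k_i)$. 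These are two phrasings of the same computation---the paper's integration identity is precisely the proof of that monotonicity in this setting. Note that Proposition~\ref{prin-exist} as stated gives existence and continuity in $\mathbf d$ but not monotonicity in the potential, so if you keep your phrasing you should either supply the one-line variational/integration argument or point to the paper's identity.
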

\begin{proof}
  Model \eqref{m1} can be rewritten as the abstract evolution equation \eqref{abstract} in Appendix \ref{AppendixC}.
Then, by Remark \ref{linearstability}, a steady state of \eqref{m1} (or \eqref{abstract}) is asymptotically stable (resp., unstable)
if it is linearly stable (resp., linearly unstable). Accordingly, we consider the eigenvalue problem associated with the semi-trivial steady state $(\mathbf u^*,\mathbf 0)$:
\begin{equation}\label{mm3}
	\begin{cases}
d_i (\psi_{i})_{xx}+r_i \left(1-\ds\frac{2u^*_i}{k_i}\right)\psi_i-\dfrac{r_iu_i^*}{k_i}\phi_i=\mu\psi_i, \;\;\;\;x\in \Omega_i,\;\; i=1,2,\cdots,n, \\		\ds \hat d_i (\phi_i)_{xx}+r_i \left(1-\dfrac{u^*_i}{k_i}\right)\phi_i=\mu \phi_i, \;\; x\in \Omega_i,\;\; i=1,2,\cdots,n, \\
		\psi_{i+1}(x_i^{+})=p_i\psi_i(x_i^{-}),\;\;d_{i+1}(\psi_{i+1})_x(x_i^{+})=d_i(\psi_i)_x(x_i^{-}),\\
		\phi_{i+1}(x_i^{+})=\hat p_i\phi_i(x_i^{-}),\;\;\hat d_{i+1}(\phi_{i+1})_{x}(x_i^{+})=\hat d_i (\phi_{i})_{x}(x_i^{-}),\\
		(\phi_{1})_{x}(0^{+})=(\phi_{n})_{x}(L^{-})=(\psi_1)_x(0^{+})=(\psi_n)_x(L^{-})=0.
	\end{cases}
\end{equation}
We first show that
\begin{equation}\label{min}
	\operatorname{Re}\mu\le \max\{\lambda_1(\mathbf {\hat p},\mathbf {\hat d},\mathbf 1-\mathbf{u^*}/\mathbf k),\lambda_1(\mathbf { p},\mathbf {d},\mathbf 1-2\mathbf{u^*}/\mathbf k)\}.
\end{equation}
Here, $\lambda_1 (\mathbf {\hat p},\mathbf {\hat d},\mathbf 1-\mathbf{u^*}/\mathbf k)$ is the principal eigenvalue of \eqref{m3}, and
$\lambda_1(\mathbf { p},\mathbf {d},\mathbf 1-2\mathbf{u^*}/\mathbf k)$ corresponds to that of
\begin{equation}\label{m32}
	\begin{cases}
		\ds d_i (\psi_i)_{xx}+r_i \left(1-\dfrac{2u^*_i}{k_i}\right)\psi_i=\lambda \psi_i, \;\; x\in \Omega_i,\;\; i=1,2,\cdots,n, \\
		\psi_{i+1}(x_i^{+})= p_i\psi_i(x_i^{-}),\;\;d_{i+1}(\psi_{i+1})_{x}(x_i^{+})= d_i (\psi_{i})_{x}(x_i^{-}),\\
		(\psi_{1})_{x}(0^{+})=(\psi_{n})_{x}(L^{-})=0.
	\end{cases}
\end{equation}
The existence of the principal eigenvalue for \eqref{m32} is also a consequence of  Proposition \ref{prin-exist} in Appendix \ref{AppendixC}. Furthermore, this proposition shows that $\operatorname{Re}\mu\le \lambda(\mathbf {\hat p},\mathbf {\hat d},\mathbf 1-\mathbf{u^*}/\mathbf k)$ when $\Phi=(\phi_1,\cdots,\phi_n) \not \equiv \mathbf0$. If $\Phi \equiv \mathbf 0$, then $\Psi= (\psi_1,\cdots,\psi_n)\not \equiv \mathbf 0$, and we obtain that $\operatorname{Re} \mu\le \lambda(\mathbf { p},\mathbf {d},\mathbf 1-2\mathbf{u^*}/\mathbf k)$. Therefore,
\eqref{min} holds.

Now we prove that $\lambda_1(\mathbf { p},\mathbf {d},\mathbf 1-2\mathbf{u^*}/\mathbf k)<0$. 	Multiplying the first equation of \eqref{m32} with $(\prod_{\ell=i}^{n}p_\ell)u_i^*$, integrating the result over $\Omega_i$ and summing them  over all $i$, we have
\begin{equation}\label{2l11}
	\begin{aligned}
		&\lambda_1\left(\mathbf { p},\mathbf {d},\mathbf 1-\frac{2\mathbf u^*}{\mathbf k}\right) \left(\sum_{i=1}^{n}(\prod_{\ell=i}^{n}p_\ell)\int_{x_{i-1}}^{x_i}{\psi_iu_i^*}{\rm d}x\right) \\
		=&\sum_{i=1}^{n}{d_i}(\prod_{\ell=i}^{n}p_\ell)\int_{x_{i-1}}^{x_i}{(\psi_{i})_{xx}u_i^*}{\rm d}x+\sum_{i=1}^{n}(\prod_{\ell=i}^{n}p_\ell)\int_{x_{i-1}}^{x_i}{r_i \psi_i u^*_i \left(1-\dfrac{2u^*_i}{k_i}\right)}{\rm d}x.\\
	\end{aligned}
\end{equation}
Similarly, 	we deduce from \eqref{m2a} that
\begin{equation}\label{2l21}
	0=\sum_{i=1}^{n}d_i(\prod_{\ell=i}^{n}p_\ell)\int_{x_{i-1}}^{x_i}{(u^*_{i})_{xx}\psi_i}{\rm d}x+\sum_{i=1}^{n}(\prod_{\ell=i}^{n}p_\ell)\int_{x_{i-1}}^{x_i}{r_i \psi_i u^*_i \left(1-\dfrac{u^*_i}{k_i}\right)}{\rm d}x.
\end{equation}
Subtracting \eqref{2l21} from \eqref{2l11} and  applying integration by parts yields
\begin{equation*}
\lambda_1(\mathbf { p},\mathbf {d},\mathbf 1-2\mathbf{u^*}/\mathbf k) \left(\sum_{i=1}^{n}(\prod_{\ell=i}^{n}p_\ell)\int_{x_{i-1}}^{x_i}{\psi_iu_i^*}{\rm d}x\right)=-\sum_{i=1}^{n}(\prod_{\ell=i}^{n}p_\ell)\int_{x_{i-1}}^{x_i}{\dfrac{r_iu_i^*}{k_i}\psi_i}{\rm d}x<0.
\end{equation*}
This implies that
$\lambda_1(\mathbf { p},\mathbf {d},\mathbf 1-2\mathbf{u^*}/\mathbf k)<0$. The desired result follows from this and the fact that $\lambda_1(\mathbf {\hat p},\mathbf {\hat d},\mathbf 1-\mathbf{u^*}/\mathbf k)$ is also an eigenvalue of \eqref{mm3}.
\end{proof}

\begin{lemma}\label{lemma2.2}
	Let  $\lambda_1 (\mathbf {\hat p},\mathbf {\hat d},\mathbf 1-\mathbf{u^*}/\mathbf k)$ be the principal eigenvalue of \eqref{m3} with  corresponding eigenfunction $\Phi=(\phi_1(x),\cdots,\phi_n(x))$, where $\phi_i>0$ in $\overline\Omega_i$ for each $i=1,\cdots,n$. Then the following identity holds:
	\begin{equation}\label{id}
		\begin{aligned}
			&\lambda_1 (\mathbf {\hat p},\mathbf {\hat d},\mathbf 1-\mathbf{u^*}/\mathbf k) \left(\sum_{i=1}^{n}(\prod_{\ell=i}^{n}p_\ell)\int_{x_{i-1}}^{x_i}{\phi_iu_i^*}{\rm d}x\right) \\
			=&\sum_{i=1}^{n-1}(\prod_{\ell=i+1}^{n}p_\ell)d_{i}(\hat{p}_i-p_i)(u^*_{i})_{x}(x_i^-)\phi_i(x_i^-) +\sum_{i=1}^{n}(\prod_{\ell=i}^{n}p_\ell)(d_i-\hat d_i)\int_{x_{i-1}}^{x_i}(\phi_{i})_{x}(u^*_{i})_{x}{\rm d}x.
		\end{aligned}
	\end{equation}
	Here we define $p_n=1$.		
\end{lemma}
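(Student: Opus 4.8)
The plan is to pair the eigenvalue equation \eqref{m3-a} with the steady state $\mathbf u^*$ and the steady-state equation \eqref{m2a} with the eigenfunction $\Phi$, using in patch $\Omega_i$ the same weight $\prod_{\ell=i}^{n}p_\ell$ that produced \eqref{2l11}. Abbreviating $\lambda_1:=\lambda_1(\mathbf{\hat p},\mathbf{\hat d},\mathbf 1-\mathbf{u^*}/\mathbf k)$, I would first multiply \eqref{m3-a} by $\big(\prod_{\ell=i}^{n}p_\ell\big)u_i^*$, integrate over $\Omega_i$, and sum over $i=1,\dots,n$ to obtain
\[
\lambda_1\sum_{i=1}^{n}\Big(\prod_{\ell=i}^{n}p_\ell\Big)\int_{x_{i-1}}^{x_i}\phi_iu_i^*\,{\rm d}x
=\sum_{i=1}^{n}\hat d_i\Big(\prod_{\ell=i}^{n}p_\ell\Big)\int_{x_{i-1}}^{x_i}(\phi_i)_{xx}u_i^*\,{\rm d}x
+\sum_{i=1}^{n}\Big(\prod_{\ell=i}^{n}p_\ell\Big)\int_{x_{i-1}}^{x_i}r_i\Big(1-\frac{u_i^*}{k_i}\Big)\phi_iu_i^*\,{\rm d}x,
\]
and likewise multiply \eqref{m2a} by $\big(\prod_{\ell=i}^{n}p_\ell\big)\phi_i$, integrate and sum to obtain
\[
0=\sum_{i=1}^{n}d_i\Big(\prod_{\ell=i}^{n}p_\ell\Big)\int_{x_{i-1}}^{x_i}(u_i^*)_{xx}\phi_i\,{\rm d}x
+\sum_{i=1}^{n}\Big(\prod_{\ell=i}^{n}p_\ell\Big)\int_{x_{i-1}}^{x_i}r_i\Big(1-\frac{u_i^*}{k_i}\Big)\phi_iu_i^*\,{\rm d}x.
\]
Subtracting the second identity from the first cancels the reaction integrals and leaves
\[
\lambda_1\sum_{i=1}^{n}\Big(\prod_{\ell=i}^{n}p_\ell\Big)\int_{x_{i-1}}^{x_i}\phi_iu_i^*\,{\rm d}x
=\sum_{i=1}^{n}\Big(\prod_{\ell=i}^{n}p_\ell\Big)\Big(\hat d_i\int_{x_{i-1}}^{x_i}(\phi_i)_{xx}u_i^*\,{\rm d}x-d_i\int_{x_{i-1}}^{x_i}(u_i^*)_{xx}\phi_i\,{\rm d}x\Big).
\]

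Next I would integrate by parts twice in each $\Omega_i$, using $\int_{x_{i-1}}^{x_i}(\phi_i)_{xx}u_i^*\,{\rm d}x=\big[(\phi_i)_xu_i^*\big]_{x_{i-1}}^{x_i}-\int_{x_{i-1}}^{x_i}(\phi_i)_x(u_i^*)_x\,{\rm d}x$ and the analogous formula with $\phi_i$ and $u_i^*$ interchanged. The interior integrals then recombine into $\sum_{i=1}^{n}\big(\prod_{\ell=i}^{n}p_\ell\big)(d_i-\hat d_i)\int_{x_{i-1}}^{x_i}(\phi_i)_x(u_i^*)_x\,{\rm d}x$, which is precisely the second sum on the right of \eqref{id}; it remains to show that the collected boundary contributions $\sum_{i=1}^{n}\big(\prod_{\ell=i}^{n}p_\ell\big)\big(\hat d_i[(\phi_i)_xu_i^*]_{x_{i-1}}^{x_i}-d_i[(u_i^*)_x\phi_i]_{x_{i-1}}^{x_i}\big)$ equal the first sum in \eqref{id}.

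To evaluate that, I would regroup the endpoint values interface by interface. The terms at $x_0=0$ and $x_n=L$ vanish because $(\phi_1)_x(0^+)=(\phi_n)_x(L^-)=0$ and $(u_1^*)_x(0^+)=(u_n^*)_x(L^-)=0$ by \eqref{m3-c} and \eqref{m2c}. At an interior interface $x_j$ ($1\le j\le n-1$) one adds the $x_j^-$ evaluations coming from $\Omega_j$ (with weight $\prod_{\ell=j}^{n}p_\ell$) to the $x_j^+$ evaluations coming from $\Omega_{j+1}$ (with weight $\prod_{\ell=j+1}^{n}p_\ell$), and substitutes the interface conditions $u_{j+1}^*(x_j^+)=p_ju_j^*(x_j^-)$, $d_{j+1}(u_{j+1}^*)_x(x_j^+)=d_j(u_j^*)_x(x_j^-)$, $\phi_{j+1}(x_j^+)=\hat p_j\phi_j(x_j^-)$, $\hat d_{j+1}(\phi_{j+1})_x(x_j^+)=\hat d_j(\phi_j)_x(x_j^-)$, together with $\prod_{\ell=j}^{n}p_\ell=p_j\prod_{\ell=j+1}^{n}p_\ell$. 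Then the two $\hat d_j(\phi_j)_x(x_j^-)u_j^*(x_j^-)$ pieces cancel exactly, while the two $d_j(u_j^*)_x(x_j^-)\phi_j(x_j^-)$ pieces survive with coefficients $-p_j$ (from $\Omega_j$) and $\hat p_j$ (from $\Omega_{j+1}$), each multiplied by $\prod_{\ell=j+1}^{n}p_\ell$, which leaves exactly $\big(\prod_{\ell=j+1}^{n}p_\ell\big)d_j(\hat p_j-p_j)(u_j^*)_x(x_j^-)\phi_j(x_j^-)$. Summing over $j=1,\dots,n-1$ and combining with the interior term gives \eqref{id}. The only delicate step is this interface bookkeeping: one must keep track of the two distinct flux-continuity constants ($\hat d$ versus $d$) and the two distinct jump factors ($\hat p_j$ versus $p_j$), and check that the weight $\prod_{\ell=i}^{n}p_\ell$ is exactly the choice that makes the "symmetric" part of each interface term cancel, so that only the mismatch proportional to $\hat p_j-p_j$ remains; the rest is routine integration by parts.
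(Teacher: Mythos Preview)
Your proposal is correct and follows exactly the approach of the paper: multiply \eqref{m3-a} and \eqref{m2a} by the weighted test functions $(\prod_{\ell=i}^{n}p_\ell)u_i^*$ and $(\prod_{\ell=i}^{n}p_\ell)\phi_i$ respectively, subtract to cancel the reaction terms, and then integrate by parts using the interface and boundary conditions \eqref{m2b}--\eqref{m2c}, \eqref{m3-b}--\eqref{m3-c}. The paper's proof is terser and leaves the interface bookkeeping implicit, whereas you have spelled out the cancellation at each $x_j$ in full detail; the content is identical.
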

\begin{proof}
	Multiplying \eqref{m3-a} with $(\prod_{\ell=i}^{n}p_\ell)u_i^*$, integrating the result over $\Omega_i$ and summing them  over all $i$, we have
	\begin{equation}\label{2l1}
		\begin{aligned}
			&\lambda_1 (\mathbf {\hat p},\mathbf {\hat d},\mathbf 1-\mathbf{u^*}/\mathbf k) \left(\sum_{i=1}^{n}(\prod_{\ell=i}^{n}p_\ell)\int_{x_{i-1}}^{x_i}{\phi_iu_i^*}{\rm d}x\right) \\
			=&\sum_{i=1}^{n}\hat{d_i}(\prod_{\ell=i}^{n}p_\ell)\int_{x_{i-1}}^{x_i}{(\phi_{i})_{xx}u_i^*}{\rm d}x+\sum_{i=1}^{n}(\prod_{\ell=i}^{n}p_\ell)\int_{x_{i-1}}^{x_i}{r_i \phi_i u^*_i \left(1-\dfrac{u^*_i}{k_i}\right)}{\rm d}x.\\
		\end{aligned}
	\end{equation}
	Similarly, 	we deduce from \eqref{m2a} that
	\begin{equation}\label{2l2}
		0=\sum_{i=1}^{n}d_i(\prod_{\ell=i}^{n}p_\ell)\int_{x_{i-1}}^{x_i}{(u^*_{i})_{xx}\phi_i}{\rm d}x+\sum_{i=1}^{n}(\prod_{\ell=i}^{n}p_\ell)\int_{x_{i-1}}^{x_i}{r_i \phi_i u^*_i \left(1-\dfrac{u^*_i}{k_i}\right)}{\rm d}x.
	\end{equation}
	Subtracting \eqref{2l2} from \eqref{2l1} and integrating by parts, we can obtain the desired result from \eqref{m2b}-\eqref{m2c} and \eqref{m3-b}-\eqref{m3-c}.
\end{proof}

The following result is a direct consequence of Lemma \ref{lemma2.2}.
\begin{lemma}\label{theorem2.3}
	Suppose that $\mathbf d=\hat{\mathbf d}$, and let $\mathbf{\overline k}$ be defined in \eqref{ok}. Then the following statements hold for model \eqref{m1}:
	\begin{enumerate}
		\item [{$\rm (i)$}]  If $\mathbf p\gg \mathbf{\overline k}$, the semi-trivial steady state $(\mathbf u^*,\mathbf 0)$ is unstable for $\mathbf p\gg\hat{\mathbf p }$ and stable for $\hat{\mathbf p }\gg \mathbf p $;
		\item [{$\rm (ii)$}]  If $\mathbf{\overline k}\gg \mathbf p$, the semi-trivial steady state $(\mathbf u^*,\mathbf 0)$ is unstable for $\hat{\mathbf p }\gg \mathbf p$ and stable for $\mathbf p\gg\hat{\mathbf p }$.
	\end{enumerate}
\end{lemma}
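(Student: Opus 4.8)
The plan is to extract everything from the identity \eqref{id} of Lemma \ref{lemma2.2}. Putting $\mathbf d=\hat{\mathbf d}$ there kills the second sum on the right-hand side of \eqref{id}, since every factor $d_i-\hat d_i$ vanishes, so that
\begin{equation*}
\lambda_1 (\mathbf {\hat p},\mathbf {\hat d},\mathbf 1-\mathbf{u^*}/\mathbf k)\left(\sum_{i=1}^{n}\Big(\prod_{\ell=i}^{n}p_\ell\Big)\int_{x_{i-1}}^{x_i}\phi_i u_i^*\,{\rm d}x\right)=\sum_{i=1}^{n-1}\Big(\prod_{\ell=i+1}^{n}p_\ell\Big)d_i(\hat p_i-p_i)(u^*_i)_x(x_i^-)\phi_i(x_i^-).
\end{equation*}
Because $\phi_i>0$ on $\overline\Omega_i$, $u_i^*>0$, and all $p_\ell,d_i>0$ (with $p_n=1$), the coefficient of $\lambda_1$ on the left is strictly positive, and each weight $b_i:=(\prod_{\ell=i+1}^{n}p_\ell)\,d_i\,\phi_i(x_i^-)$ is strictly positive. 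Hence the sign of $\lambda_1 (\mathbf {\hat p},\mathbf {\hat d},\mathbf 1-\mathbf{u^*}/\mathbf k)$ equals that of $\sum_{i=1}^{n-1}b_i(\hat p_i-p_i)(u^*_i)_x(x_i^-)$, and since the sign of this principal eigenvalue governs the stability of $(\mathbf u^*,\mathbf 0)$, the whole statement reduces to one fact about the resident steady state: the sign of the interface derivatives $(u^*_i)_x(x_i^-)$.

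Concretely, I would prove the \emph{Key Claim}: if $\mathbf p\gg\mathbf{\overline k}$ then $(u^*_i)_x(x_i^-)<0$ for every $i=1,\dots,n-1$, while if $\mathbf{\overline k}\gg\mathbf p$ then $(u^*_i)_x(x_i^-)>0$ for every $i$. Granting this, case (i) with $\mathbf p\gg\hat{\mathbf p}$ makes each summand $b_i(\hat p_i-p_i)(u^*_i)_x(x_i^-)$ a positive multiple of a product of two negative numbers, hence positive, so $\lambda_1>0$ and $(\mathbf u^*,\mathbf 0)$ is unstable; with $\hat{\mathbf p}\gg\mathbf p$ every summand becomes negative, so $\lambda_1<0$ and $(\mathbf u^*,\mathbf 0)$ is stable; case (ii) is identical once the signs of $(u^*_i)_x(x_i^-)$ are reversed. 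Moreover, by the reflection $x\mapsto L-x$ (which sends patch $i$ to patch $n+1-i$ and the parameters $p_i,d_i,k_i$ to $1/p_{n-i},d_{n+1-i},k_{n+1-i}$, flips the sign of every $x$-derivative, and interchanges the hypotheses $\mathbf p\gg\mathbf{\overline k}$ and $\mathbf{\overline k}\gg\mathbf p$), it suffices to establish the Key Claim in the case $\mathbf p\gg\mathbf{\overline k}$.

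For $\mathbf p\gg\mathbf{\overline k}$ the engine is the observation that $\theta_i:=u^*_i-k_i$ solves on $\Omega_i$ the linear equation $d_i(\theta_i)_{xx}=(r_iu^*_i/k_i)\,\theta_i$, whose zeroth-order coefficient is positive; hence on each patch $\theta_i$ is strictly convex wherever positive, strictly concave wherever negative, and therefore changes sign at most once. I would feed this shape information into the three structural ingredients: the Neumann conditions $(\theta_1)_x(0^+)=(\theta_n)_x(L^-)=0$; the flux matching $d_i(u^*_i)_x(x_i^-)=d_{i+1}(u^*_{i+1})_x(x_i^+)$, which preserves the sign of the derivative across each interface; and the density jump $u^*_{i+1}(x_i^+)=p_i u^*_i(x_i^-)$ with $p_i>k_{i+1}/k_i$, so that $u^*_i(x_i^-)\ge k_i$ forces $u^*_{i+1}(x_i^+)>k_{i+1}$. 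Running the argument from $x=0$: first rule out $\theta_1(0^+)\ge 0$, for if it held then $\theta_1>0$ (or $\theta_1\equiv0$) on $\Omega_1$ with nonnegative exit derivative, and the convexity-while-positive property, combined with the density jumps $p_i>k_{i+1}/k_i$, would propagate ``$\theta_i>0$ and increasing'' through every subsequent patch, ending with $(u^*_n)_x(L^-)>0$ and contradicting the Neumann condition at $L$; hence $\theta_1<0$ with $(\theta_1)_x<0$ on $(0,x_1]$. A forward induction on $i$ then shows $(\theta_i)_x<0$ throughout $\Omega_i$: on patch $i+1$ one enters with a negative derivative, and the at-most-one-sign-change property leaves exactly one way to exit with a nonnegative derivative, namely $\theta_{i+1}$ developing an interior positive minimum, which again triggers the same forbidden forward escape.

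The main obstacle is precisely this escape-exclusion step. It requires a careful case split at the entrance $x_i^+$ of each patch, according to the sign of $\theta_{i+1}(x_i^+)$ — which need not agree with the sign of $(\theta_{i+1})_x(x_i^+)$ — and it genuinely uses the hypothesis $p_i>k_{i+1}/k_i$ at every interface to make the ``above $k$'' status propagate under the density jump. Crucially, the contradiction is only closed by the Neumann condition at the far endpoint $x=L$, so the argument cannot be localized patch by patch but must be carried from one end of the landscape to the other; the case $\mathbf{\overline k}\gg\mathbf p$ is then handled by the reflection above (equivalently, by running the same argument backward from $x=L$).
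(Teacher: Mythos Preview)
Your approach is correct and essentially identical to the paper's: both apply Lemma~\ref{lemma2.2} with $\mathbf d=\hat{\mathbf d}$ to reduce the sign of $\lambda_1$ to the sign of $(u_i^*)_x(x_i^-)$ at every interface. Your Key Claim is precisely Lemma~\ref{lemma2.1-n} in Appendix~\ref{appA}, which the paper simply cites; your inline convexity-and-forward-sweep sketch is a valid alternative to the paper's induction-on-$n$ proof of that lemma.
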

\begin{proof}
	Since $\mathbf d=\hat{\mathbf d}$, it follows from Lemma \ref{lemma2.2} that
	\begin{equation*}
		\begin{split}
			&\lambda_1 (\mathbf {\hat p},\mathbf {\hat d},\mathbf 1-\mathbf{u^*}/\mathbf k) \left(\sum_{i=1}^{n}(\prod_{\ell=i}^{n}p_\ell)\int_{x_{i-1}}^{x_i}{\phi_iu_i^*}{\rm d}x\right) \\ &=\sum_{i=1}^{n-1}(\prod_{\ell=i+1}^{n}p_\ell)d_{i}(\hat{p}_i-p_i)(u^*_{i})_{x}(x_i^-)\phi_i(x_i^-).
		\end{split}
	\end{equation*}
	Here $p_n=1$ and $(\phi_1(x),\cdots,\phi_n(x))$ is the eigenfunction corresponding to $\lambda_1 (\mathbf {\hat p},\mathbf {\hat d},\mathbf 1-\mathbf{u^*}/\mathbf k)$, where $\phi_i>0$ in $\overline\Omega_i$ for all $i=1,\cdots,n$.
	Thus, the desired result follows from this and Lemma \ref{lemma2.1-n}.
\end{proof}

\begin{lemma}\label{lemma2.3}
	Suppose that $\lambda_1 (\mathbf {\hat p},\mathbf {\hat d},\mathbf 1-\mathbf{u^*}/\mathbf k)=0$,  where $\lambda_1 (\mathbf {\hat p},\mathbf {\hat d},\mathbf 1-\mathbf{u^*}/\mathbf k)$ is the principal eigenvalue of \eqref{m3} and $\Phi=(\phi_1(x),\cdots,\phi_n(x))$ is the corresponding eigenfunction with $\phi_i(x)>0$ for $x\in \overline\Omega_i$ and $i=1,\cdots,n$. Let $x_{\ell_*-1}\le a< x_{\ell_*}$ and $x_{\ell^*-1}< b \le x_{\ell^*}$ for some $\ell_*$ and $\ell^*$ with
	$1\le \ell_*< \ell^*\le n$.
	Then the following two identities hold:
	\begin{equation}\label{id-1}
		\begin{aligned}
			0=&\sum_{i=\ell_*}^{\ell^*-1}(\prod_{\ell=i+1}^{\ell^*}p_\ell)d_{i}(\hat{p}_i-p_i)(u^*_{i})_{x}(x_i^-)\phi_i(x_i^-)+
			\hat {d}_{\ell^*}p_{\ell^*}(\phi_{\ell^*})_x(b^-)u^*_{\ell^*}(b^-)\\ &-\hat {d}_{\ell_*}(\prod_{\ell=\ell_*}^{\ell^*}p_\ell)(\phi_{\ell_*})_x(a^+)u^*_{\ell_*}(a^+) +\sum_{i=\ell_*}^{\ell^*}(\prod_{\ell=i}^{\ell^*}p_\ell)(d_i-\hat d_i)\int_{\Omega_i\cap(a,b)}(\phi_{i})_{x}(u^*_{i})_{x}{\rm d}x\\
			&-{d}_{\ell^*}p_{\ell^*}(u^*_{\ell^*})_x(b^-)\phi_{_{\ell^*}}(b^-) +{d}_{\ell_*}(\prod_{\ell=\ell_*}^{\ell^*}p_\ell)(u^*_{\ell_*})_x(a^+)\phi_{_{\ell_*}}(a^+),
		\end{aligned}
	\end{equation}
	and
	\begin{equation}\label{uv4-1}
		\begin{aligned}
			0=&\sum_{i=\ell_*}^{\ell^*-1} \dfrac{\hat{d}_i(k_i\hat{p}_i-k_{i+1})(\phi_{i})_{x}(x_i^-)}{\hat{p}_i\phi_i(x_i^-)}+\dfrac{\hat{d}_{\ell^*}k_{\ell^*}(\phi_{\ell^*})_{x}(b^-)}{\phi_{\ell^*}(b^-)}-\dfrac{\hat{d}_{\ell_*}k_{\ell_*}(\phi_{\ell_*})_{x}(a^+)}{\phi_{\ell_*}(a^+)}\\
			&+\sum_{i=\ell_*}^{\ell^*}\int_{\Omega_i\cap(a,b)}\left[\dfrac{\hat{d}_ik_i(\phi_{i})_{x}^2}{\phi_i^2}+\dfrac{r_i(k_i-u_i^*)^2}{k_i}  \right]{\rm d}x-d_{\ell^*}(u^*_{\ell^*})_x(b^-)+d_{\ell_*}(u^*_{\ell_*})_x(a^+).\\
		\end{aligned}
	\end{equation}
	Here we define $p_n=1$.
\end{lemma}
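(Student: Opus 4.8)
The plan is to obtain both identities by the weighted–testing technique already used for \eqref{id} in the proof of Lemma~\ref{lemma2.2}, but localized to the subinterval $(a,b)\subseteq(0,L)$; since $a$ and $b$ are interior points that need not coincide with interfaces, the integrations by parts will leave genuine boundary contributions at $a^{+}$ and $b^{-}$. Throughout I use the hypothesis $\lambda_1 (\mathbf {\hat p},\mathbf {\hat d},\mathbf 1-\mathbf{u^*}/\mathbf k)=0$, which turns \eqref{m3-a} into $\hat d_i(\phi_i)_{xx}+r_i(1-u_i^*/k_i)\phi_i=0$ on each $\Omega_i$, while $\mathbf u^*$ still solves \eqref{m2a}.

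To prove \eqref{id-1}, I would first multiply the equation for $\phi_i$ by $(\prod_{\ell=i}^{\ell^*}p_\ell)u_i^*$ and the equation \eqref{m2a} for $u_i^*$ by $(\prod_{\ell=i}^{\ell^*}p_\ell)\phi_i$, integrate each over $\Omega_i\cap(a,b)$, and subtract: the reaction terms $r_i\phi_iu_i^*(1-u_i^*/k_i)$ cancel, leaving for each $i$ the relation $\hat d_i\int(\phi_i)_{xx}u_i^*\,{\rm d}x=d_i\int(u_i^*)_{xx}\phi_i\,{\rm d}x$. A single integration by parts on each side rewrites this as a boundary term plus $(d_i-\hat d_i)\int(\phi_i)_x(u_i^*)_x\,{\rm d}x$. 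Summing over $i=\ell_*,\dots,\ell^*$ with the indicated weights and invoking the flux/jump conditions \eqref{m2b} and \eqref{m3-b} at the internal interfaces $x_i$ ($\ell_*\le i\le\ell^*-1$): the weights are chosen precisely so that the two $\hat d_i(\phi_i)_xu_i^*$ contributions coming from patch $i$ and patch $i+1$ cancel, while the $d_i(u_i^*)_x\phi_i$ contributions combine into $\sum_{i=\ell_*}^{\ell^*-1}(\prod_{\ell=i+1}^{\ell^*}p_\ell)d_i(\hat p_i-p_i)(u_i^*)_x(x_i^-)\phi_i(x_i^-)$; the boundary terms at $a$ and $b$ survive and produce the remaining explicit terms of \eqref{id-1}.

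To prove \eqref{uv4-1}, I would divide the equation for $\phi_i$ by $\phi_i>0$, multiply by $k_i$, and integrate over $\Omega_i\cap(a,b)$. Using $(\phi_i)_{xx}/\phi_i=\big((\phi_i)_x/\phi_i\big)_x+\big((\phi_i)_x/\phi_i\big)^2$ produces the integrand $\hat d_ik_i(\phi_i)_x^2/\phi_i^2$ together with the boundary term $\hat d_ik_i\big[(\phi_i)_x/\phi_i\big]$. For the remaining $k_ir_i(1-u_i^*/k_i)$ I would use the algebraic splitting $k_ir_i(1-u_i^*/k_i)=r_i(k_i-u_i^*)^2/k_i+r_iu_i^*(1-u_i^*/k_i)$ and replace $r_iu_i^*(1-u_i^*/k_i)$ by $-d_i(u_i^*)_{xx}$ via \eqref{m2a}, whose integral over $\Omega_i\cap(a,b)$ is $-d_i\big[(u_i^*)_x\big]$. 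Summing over $i=\ell_*,\dots,\ell^*$: at each internal interface the continuity of the flux $d_i(u_i^*)_x$ in \eqref{m2b} cancels the $(u_i^*)_x$ boundary terms, whereas \eqref{m3-b} turns the jump of $\hat d_ik_i(\phi_i)_x/\phi_i$ into the factor $(k_i\hat p_i-k_{i+1})/\hat p_i$; note that no cumulative products of $p_\ell$ enter here because each patch equation is tested only against the constant $k_i$. What is left at $a^+$ and $b^-$ is exactly \eqref{uv4-1}.

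I expect the only real difficulty to be the bookkeeping: checking that the weights $\prod_{\ell=i}^{\ell^*}p_\ell$ in \eqref{id-1} are exactly the ones that make the cross terms telescope and cancel at the internal interfaces, and keeping track of one-sided limits and signs (including the degenerate cases $a=x_{\ell_*-1}$ or $b=x_{\ell^*}$). The analytic content is elementary — two integrations by parts and one algebraic identity — so I do not anticipate a genuinely hard step.
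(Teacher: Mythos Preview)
Your proposal is correct and follows essentially the same approach as the paper: for \eqref{id-1} you localize the argument of Lemma~\ref{lemma2.2} to $(a,b)$ exactly as the paper does, and for \eqref{uv4-1} your strategy of dividing by $\phi_i/k_i$, integrating by parts, and eliminating $r_iu_i^*(1-u_i^*/k_i)$ via \eqref{m2a} is the same computation the paper carries out (the paper organizes it as subtracting the integrated equation \eqref{m2a} from the integrated equation obtained from \eqref{m3-a}, which is algebraically identical to your splitting $k_ir_i(1-u_i^*/k_i)=r_i(k_i-u_i^*)^2/k_i+r_iu_i^*(1-u_i^*/k_i)$).
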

\begin{proof}
	Letting $\lambda_1 (\mathbf {\hat p},\mathbf {\hat d},\mathbf 1-\mathbf{u^*}/\mathbf k)=0$ and replacing $(1,n)$ by $(l_*,l^*)$ in the proof of Lemma \ref{lemma2.2}, we can obtain that \eqref{id-1} holds.
	Now we show that \eqref{uv4-1} holds, and the proof is inspired by \cite{36-Maciel-2020}. Dividing \eqref{m3-a} by $\phi_i/k_i$, integrating the result over $\Omega_i\cap (a,b)$ and summing them from $i=\ell_*$ to $i=\ell^*$, we see from integration by parts and \eqref{m3-b}  that
	\begin{equation}\label{uv5}
		\begin{aligned}
			0=&\sum_{i=\ell_*}^{\ell^*-1} \dfrac{\hat{d}_i(k_i\hat{p}_i-k_{i+1})(\phi_{i})_{x}(x_i^-)}{\hat{p}_i\phi_i(x_i^-)}+\dfrac{\hat{d}_{\ell^*}k_{\ell^*}(\phi_{\ell^*})_{x}(b^-)}{\phi_{\ell^*}(b^-)}-\dfrac{\hat{d}_{\ell_*}k_{\ell_*}(\phi_{\ell_*})_{x}(a^+)}{\phi_{\ell_*}(a^+)}\\			&+\sum_{i=\ell_*}^{\ell^*}\int_{\Omega_i\cap(a,b)}\left[\dfrac{\hat{d}_ik_i(\phi_{i})_{x}^2}{\phi_i^2}+r_ik_i\left(1-\dfrac{u^*_i}{k_i}\right)  \right]{\rm d}x.
		\end{aligned}
	\end{equation}
	In addition, integrating \eqref{m2a} over $\Omega_i\cap(a,b)$ and summing them from $i=\ell_*$ to $i=\ell^*$ gives
	\begin{equation}\label{uv3}
		\sum_{i=\ell_*}^{\ell^*}\int_{\Omega_i\cap(a,b)} r_iu_i^*\left(1-\dfrac{u_i^*}{k_i}\right){\rm d}x+d_{\ell^*}(u_{\ell^*})_x(b^-)-d_{\ell_*}(u_{\ell_*})_x(a^+)=0.
	\end{equation}
	Subtracting \eqref{uv3} from \eqref{uv5}, we see that \eqref{uv4-1} holds.
\end{proof}
Then we obtain two results on the signs of $\lambda_1 (\mathbf {\hat p},\mathbf {\hat d},\mathbf 1-\mathbf{u^*}/\mathbf k)$. The first one is as follows.
\begin{lemma}\label{lemma2.5}
	Let $\mathbf{\overline k}$ be defined in \eqref{ok} and let $\lambda_1 (\mathbf {\hat p},\mathbf {\hat d},\mathbf 1-\mathbf{u^*}/\mathbf k)$ be the principal eigenvalue of \eqref{m3}. Then the following two statements hold:
	\begin{enumerate}
		\item [{$\rm (i)$}]  If $\mathbf p\gg \mathbf{\overline k}$, then $\lambda_1 (\mathbf {\hat p},\mathbf {\hat d},\mathbf 1-\mathbf{u^*}/\mathbf k)>0$ for $(\mathbf {\hat p},\mathbf {\hat d})\in \mathcal L_1$ and $\lambda_1 (\mathbf {\hat p},\mathbf {\hat d},\mathbf 1-\mathbf{u^*}/\mathbf k)<0$ for $(\mathbf {\hat p},\mathbf {\hat d})\in \mathcal L_2$, where
		\begin{equation}\label{l1l2}
			\mathcal L_1:=\{(\mathbf {\hat p},\mathbf {\hat d}):\mathbf p\gg\mathbf {\hat p},\;\mathbf d\ge \mathbf{\hat d}\},\;\;\mathcal L_2:=\{(\mathbf {\hat p},\mathbf {\hat d}):\mathbf {\hat p}\gg \mathbf p,\;\mathbf{\hat d}\ge \mathbf d\};
		\end{equation}
		\item [{$\rm (ii)$}]  If $\mathbf{\overline k}\gg \mathbf p$, then $\lambda_1 (\mathbf {\hat p},\mathbf {\hat d},\mathbf 1-\mathbf{u^*}/\mathbf k)>0$ for $(\mathbf {\hat p},\mathbf {\hat d})\in \mathcal S_1$ and $\lambda_1 (\mathbf {\hat p},\mathbf {\hat d},\mathbf 1-\mathbf{u^*}/\mathbf k)<0$ for $(\mathbf {\hat p},\mathbf {\hat d})\in \mathcal S_2$, where
		\begin{equation}\label{s1s2}
			\mathcal S_1:=\{(\mathbf {\hat p},\mathbf {\hat d}):\mathbf {\hat p}\gg\mathbf p,\; \mathbf d\ge\mathbf{\hat d}\},\;\;
			\mathcal S_2:=\{(\mathbf {\hat p},\mathbf {\hat d}): \mathbf p\gg\mathbf {\hat p},\;\mathbf{\hat d}\ge \mathbf d\}.
		\end{equation}
	\end{enumerate}
\end{lemma}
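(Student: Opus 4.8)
The plan is to deduce Lemma~\ref{lemma2.5} from Lemma~\ref{theorem2.3}, which already settles the case $\hat{\mathbf d}=\mathbf d$, together with a monotonicity property of the principal eigenvalue in the diffusion vector $\hat{\mathbf d}$. The crucial observation is that $\mathbf u^*$ solves \eqref{m2} and hence depends only on $(\mathbf p,\mathbf d,\mathbf k,\mathbf r)$ and the patch geometry, not on $(\hat{\mathbf p},\hat{\mathbf d})$; so in the eigenvalue problem \eqref{m3} the coefficient $\mathbf 1-\mathbf u^*/\mathbf k$ is frozen, and the dependence of $\lambda_1(\hat{\mathbf p},\hat{\mathbf d},\mathbf 1-\mathbf u^*/\mathbf k)$ on $\hat{\mathbf d}$ enters only through the diffusion terms and the flux conditions. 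I will show that, with $\hat{\mathbf p}$ fixed, $\hat{\mathbf d}\mapsto\lambda_1(\hat{\mathbf p},\hat{\mathbf d},\mathbf 1-\mathbf u^*/\mathbf k)$ is non-increasing in each component; the lemma then follows by comparing with the reference point $\hat{\mathbf d}=\mathbf d$.

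To obtain the monotonicity I would establish (building on the existence statement in Proposition~\ref{prin-exist}) a variational characterization of $\lambda_1$. The operator in \eqref{m3} is self-adjoint in the weighted space $\bigoplus_{i=1}^{n}L^{2}(\Omega_i,\,w_i\,\mathrm dx)$ with $w_i:=\prod_{\ell=i}^{n-1}\hat p_\ell$ and $w_n:=1$: this choice of weights is exactly the one absorbing the jump factors $\hat p_i$ in $\phi_{i+1}(x_i^+)=\hat p_i\phi_i(x_i^-)$, so that the associated quadratic form is symmetric on the form domain $\{\Phi\in\bigoplus_i H^1(\Omega_i):\phi_{i+1}(x_i^+)=\hat p_i\phi_i(x_i^-)\}$ and the flux conditions $\hat d_{i+1}(\phi_{i+1})_x(x_i^+)=\hat d_i(\phi_i)_x(x_i^-)$, as well as the no-flux conditions at $0,L$, emerge as natural boundary conditions. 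This yields
\begin{equation*}
\lambda_1(\hat{\mathbf p},\hat{\mathbf d},\mathbf 1-\mathbf u^*/\mathbf k)=\sup_{\Phi\ne\mathbf 0}\frac{\sum_{i=1}^{n}w_i\int_{\Omega_i}\bigl(-\hat d_i(\phi_i)_{x}^{2}+r_i(1-u_i^*/k_i)\phi_i^{2}\bigr)\,\mathrm dx}{\sum_{i=1}^{n}w_i\int_{\Omega_i}\phi_i^{2}\,\mathrm dx}.
\end{equation*}
Since the weights $w_i$ and the form domain do not involve $\hat{\mathbf d}$, increasing any $\hat d_j$ only decreases the numerator for every fixed admissible $\Phi$; hence the supremum is non-increasing in each $\hat d_j$, which is the desired monotonicity.

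With this in hand the conclusion is immediate. Fix $(\hat{\mathbf p},\hat{\mathbf d})\in\mathcal L_1$, so $\hat{\mathbf d}\le\mathbf d$ and $\mathbf p\gg\hat{\mathbf p}$; by monotonicity, $\lambda_1(\hat{\mathbf p},\hat{\mathbf d},\mathbf 1-\mathbf u^*/\mathbf k)\ge\lambda_1(\hat{\mathbf p},\mathbf d,\mathbf 1-\mathbf u^*/\mathbf k)$, and since $\mathbf p\gg\mathbf{\overline k}$ and $\mathbf p\gg\hat{\mathbf p}$, Lemma~\ref{theorem2.3}(i) gives that $(\mathbf u^*,\mathbf 0)$ is linearly unstable, i.e.\ $\lambda_1(\hat{\mathbf p},\mathbf d,\mathbf 1-\mathbf u^*/\mathbf k)>0$ (by the stability criterion recorded at the beginning of Section~2); hence $\lambda_1(\hat{\mathbf p},\hat{\mathbf d},\mathbf 1-\mathbf u^*/\mathbf k)>0$. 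For $(\hat{\mathbf p},\hat{\mathbf d})\in\mathcal L_2$ one has $\hat{\mathbf d}\ge\mathbf d$ and $\hat{\mathbf p}\gg\mathbf p$, so $\lambda_1(\hat{\mathbf p},\hat{\mathbf d},\mathbf 1-\mathbf u^*/\mathbf k)\le\lambda_1(\hat{\mathbf p},\mathbf d,\mathbf 1-\mathbf u^*/\mathbf k)<0$ by Lemma~\ref{theorem2.3}(i). Part~(ii) is identical, using Lemma~\ref{theorem2.3}(ii) in place of (i) on the regions $\mathcal S_1$ and $\mathcal S_2$.

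The main obstacle I anticipate is the variational step: making precise the weighted $L^2$ structure in which \eqref{m3} is self-adjoint — in particular checking that the jump factors $\hat p_i$ are exactly neutralized by the weights $w_i=\prod_{\ell=i}^{n-1}\hat p_\ell$, that the flux and no-flux conditions arise variationally, and that the principal eigenvalue furnished by Proposition~\ref{prin-exist} coincides with the supremum of the above Rayleigh quotient. Once this is secured, the monotonicity and the four-region bookkeeping are routine.
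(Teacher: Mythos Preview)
Your approach is correct and genuinely different from the paper's. Both routes start from Lemma~\ref{theorem2.3} (the case $\hat{\mathbf d}=\mathbf d$), but you then invoke a Rayleigh-quotient characterization of $\lambda_1$ in the weighted space and read off monotonicity in each $\hat d_j$ directly; the paper instead uses continuity of $\lambda_1$ in $\hat{\mathbf d}$ (Proposition~\ref{prin-exist}) plus the convexity of $\mathcal L_1,\mathcal L_2$, and then rules out $\lambda_1=0$ by a contradiction argument: assuming $\lambda_1=0$, it locates the first interior zero $y_1$ of $(\phi_\ell)_x$ and plugs $a=0$, $b=y_1$ into the identity \eqref{id-1} from Lemma~\ref{lemma2.3}, using the sign information on $(u_i^*)_x$ from Lemma~\ref{lemma2.1-n} to force a strict sign on the right-hand side. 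Your route is shorter and more conceptual (and the self-adjointness you need is essentially already recorded in the paper's Appendix~\ref{AppendixC}, with the equivalent weight normalization $1/\prod_{j=1}^{i-1}\hat p_j$); the paper's route, on the other hand, develops the identity \eqref{id-1} in a form that is reused verbatim in Lemma~\ref{lemma2.5-s}, where no ordering on $\hat{\mathbf d}$ is assumed and your monotonicity argument would not apply. One small point worth making explicit in your write-up: your monotonicity is only non-strict, but since Lemma~\ref{theorem2.3} gives a \emph{strict} sign at $\hat{\mathbf d}=\mathbf d$, the comparison $\lambda_1(\hat{\mathbf p},\hat{\mathbf d},\cdot)\ge\lambda_1(\hat{\mathbf p},\mathbf d,\cdot)>0$ (and the analogous $\le\cdots<0$) still yields the strict conclusions required.
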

\begin{proof}
	We only prove $\rm (i)$, and $\rm (ii)$ can be treated similarly. By Lemma \ref{theorem2.3},
	we see that $\lambda_1 (\mathbf {\hat p},\mathbf { \hat d},\mathbf 1-\mathbf{u^*}/\mathbf k)>0$ (resp. $<0$) for $\mathbf p\gg\hat{\mathbf p } $ (resp. $\hat{\mathbf p }\gg \mathbf p$) if $\mathbf { \hat d}=\mathbf {  d}$. 
	Note from Proposition \ref{prin-exist} in Appendix \ref{AppendixC} that $\lambda_1 (\mathbf {\hat p},\mathbf {\hat d},\mathbf 1-\mathbf{u^*}/\mathbf k)$ is continuous with respect to $\mathbf {\hat d}$.
	Noting that $\mathcal L_i$ $(i=1,2)$ are convex, it suffices to show that $\lambda_1 (\mathbf {\hat p},\mathbf {\hat d},\mathbf 1-\mathbf{u^*}/\mathbf k) \ne 0$ for $(\mathbf {\hat p},\mathbf {\hat d})\in \mathcal L_1\cup\mathcal L_2$.
	Suppose to the contrary that $\lambda_1 (\mathbf {\hat p},\mathbf {\hat d},\mathbf 1-\mathbf{u^*}/\mathbf k) = 0$ for some $(\mathbf {\hat p},\mathbf {\hat d})\in \mathcal L_1\cup\mathcal L_2$ and denote the corresponding eigenfunction by $\mathbf \phi=( \phi_1(x),\cdots,\phi_n(x))$ with $\phi_i(x)>0$ for $x\in\overline\Omega_i$ and $i=1,\cdots,n$.
	Then we will obtain a contradiction for each of the following two cases: (i$_{1}$) $(\mathbf {\hat p},\mathbf {\hat d})\in \mathcal L_1$; (i$_{2}$) $(\mathbf {\hat p},\mathbf {\hat d})\in \mathcal L_2$.
	For simplicity, we only obtain a contradiction for 	(i$_1$), and (i$_{2}$) can be treated similarly.	
	
	By Lemma \ref{lemma2.1-n}, $u^*_1(0^+)<k_1$. Then we deduce (by \eqref{m3-a}) that $(\phi_1)_{xx}(0^+)<0$. This combined with $(\phi_1)_{x}(0^+)=0$ implies that
	$(\phi_1)_x(x)<0$ for $x\in(0,\epsilon)$ with $0<\epsilon\ll1$.
	Since $(\phi_1)_{x}(L^-)=0$, it follows that
	\begin{equation*}
		y_1=\inf \{y>0:\exists 1\le \ell\le n\;\; \text{s.t.}\;\; y\in (x_{\ell-1},x_\ell] \;\;\text{and} \;\; (\phi_\ell)_x(y^-)=0\}
	\end{equation*}
	is well-defined with $y_1\in (x_{\ell_1-1},x_{\ell_1}]$ for some $\ell_1$ with $1\le \ell_1\le n$.  Then
	\begin{equation}\label{phiell1} (\phi_{\ell_1})_x(y_1^-)=0\;\;\text{and}\;\;(\phi_i)_x(x)<0\;\;\text{for}\;\;x\in\Omega_i\cap(0,y_1)\;\;\text{with}\;\;i=1,\cdots,\ell_1.
	\end{equation}
	Substituting $a=0$, $b=y_1$, $\ell_*=1$ and $\ell^*=\ell_1$ into \eqref{id-1}, we obtain that
	\begin{equation}\label{1ell1}
		\begin{aligned}
			0=&\sum_{i=1}^{\ell_1-1}(\prod_{\ell=i+1}^{\ell_1}p_\ell)d_{i}(\hat{p}_i-p_i)(u^*_{i})_{x}(x_i^-)\phi_i(x_i^-)+\hat {d}_{\ell_1}p_{\ell_1}(\phi_{_{\ell_1}})_x(y_1^-)u^*_{\ell_1}(y_1^-) \\ &+\sum_{i=1}^{\ell_1}(\prod_{\ell=i}^{\ell_1}p_\ell)(d_i-\hat d_i)\int_{\Omega_i\cap(0,y_1)}(\phi_{i})_{x}(u^*_{i})_{x}{\rm d}x-{d}_{\ell_1}p_{\ell_1}(u^*_{\ell_1})_x(y_1^-)\phi_{_{\ell_1}}(y_1^-)>0,
		\end{aligned}
	\end{equation}
	where we have used $(\mathbf {\hat p},\mathbf {\hat d})\in \mathcal L_1$, \eqref{phiell1} and Lemma \ref{lemma2.1-n} in the last step. This leads to a contradiction and completes the proof.
\end{proof}
In the following, we show that the sign of $\lambda_1 (\mathbf {\hat p},\mathbf {\hat d},\mathbf 1-\mathbf{u^*}/\mathbf k)$ can be determined without additional assumption on $\mathbf d,\mathbf {\hat d}$ if $\mathbf p$ and $\mathbf {\hat p}$ are opposite sides of $\mathbf{\overline k}$ with respect to the order defined in \eqref{order1}.
\begin{lemma}\label{lemma2.5-s}
	Let $\mathbf{\overline k}$ be defined in \eqref{ok} and let $\lambda_1 (\mathbf {\hat p},\mathbf {\hat d},\mathbf 1-\mathbf{u^*}/\mathbf k)$ be the principal eigenvalue of \eqref{m3}. Then the following statements hold:
	\begin{enumerate}
		\item [{$\rm (i)$}]  If $\mathbf p\gg \mathbf{\overline k}$, then $\lambda_1 (\mathbf {\hat p},\mathbf {\hat d},\mathbf 1-\mathbf{u^*}/\mathbf k)> 0$ for $(\mathbf {\hat p},\mathbf {\hat d})\in \mathcal L_3$, where
		\begin{equation}\label{l3}
			\mathcal L_3:=\{(\mathbf {\hat p},\mathbf {\hat d}):\mathbf{\overline{k}}\gg\mathbf {\hat p}\};
		\end{equation}
		\item [{$\rm (ii)$}]  If $\mathbf{\overline k}\gg\mathbf p$, then $\lambda_1 (\mathbf {\hat p},\mathbf {\hat d},\mathbf
		1-\mathbf{u^*}/\mathbf k) >0$ for $(\mathbf {\hat p},\mathbf {\hat d})\in \mathcal S_3$, where
		\begin{equation}\label{s3}
			\mathcal S_3:=\{(\mathbf {\hat p},\mathbf {\hat d}):\mathbf {\hat p}\gg\mathbf{\overline{k}}\}.
		\end{equation}
	\end{enumerate}
\end{lemma}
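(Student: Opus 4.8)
The plan is to run the contradiction argument of Lemma~\ref{lemma2.5}, but with the ``completed-square'' identity \eqref{uv4-1} in place of \eqref{id-1}; I describe $\rm(i)$, since $\rm(ii)$ is the mirror image. \emph{Step 1 (reduction to non-vanishing).} Fix $\mathbf{\hat p}$ with $\mathbf{\overline k}\gg\mathbf{\hat p}$. The $\mathbf{\hat d}$-slice of $\mathcal L_3$ is all of $(0,\infty)^n$, which is convex and contains $\mathbf{\hat d}=\mathbf d$, and $\lambda_1(\mathbf{\hat p},\mathbf{\hat d},\mathbf 1-\mathbf{u^*}/\mathbf k)$ is continuous in $\mathbf{\hat d}$ by Proposition~\ref{prin-exist}. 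When $\mathbf{\hat d}=\mathbf d$ one has $\mathbf p\gg\mathbf{\overline k}\gg\mathbf{\hat p}$, hence $\mathbf p\gg\mathbf{\hat p}$, so Lemma~\ref{theorem2.3}$\rm(i)$ makes $(\mathbf u^*,\mathbf 0)$ unstable there, i.e.\ $\lambda_1>0$. Moving along the segment from $(\mathbf{\hat p},\mathbf d)$ to $(\mathbf{\hat p},\mathbf{\hat d})$, the intermediate value theorem shows it suffices to prove $\lambda_1(\mathbf{\hat p},\mathbf{\hat d},\mathbf 1-\mathbf{u^*}/\mathbf k)\ne 0$ for all $(\mathbf{\hat p},\mathbf{\hat d})\in\mathcal L_3$.

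\emph{Step 2 (the subinterval).} Suppose instead $\lambda_1=0$ with positive eigenfunction $\Phi=(\phi_1,\dots,\phi_n)$. Since $\mathbf p\gg\mathbf{\overline k}$, Lemma~\ref{lemma2.1-n} gives $u_1^*(0^+)<k_1$ and $(u_i^*)_x<0$ in $\Omega_i$ for every $i$. Evaluating \eqref{m3-a} with $\lambda_1=0$ at $x=0^+$ and using $(\phi_1)_x(0^+)=0$ yields $(\phi_1)_{xx}(0^+)<0$, so $(\phi_1)_x<0$ just to the right of $0$; hence, exactly as in Lemma~\ref{lemma2.5},
\[
y_1=\inf\{y>0:\ \exists\,1\le\ell\le n\ \text{with}\ y\in(x_{\ell-1},x_\ell]\ \text{and}\ (\phi_\ell)_x(y^-)=0\}
\]
is well-defined (the defining set contains $L$ because $(\phi_n)_x(L^-)=0$), say $y_1\in(x_{\ell_1-1},x_{\ell_1}]$; moreover $(\phi_{\ell_1})_x(y_1^-)=0$ and, since the flux condition in \eqref{m3-b} preserves the sign of $(\phi_i)_x$ across interfaces, $(\phi_i)_x<0$ on $\Omega_i\cap(0,y_1)$ for $i=1,\dots,\ell_1$.

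\emph{Step 3 (the contradiction, and the mirror case).} Apply Lemma~\ref{lemma2.3} with $a=0$, $b=y_1$, $\ell_*=1$, $\ell^*=\ell_1$. In \eqref{uv4-1} the two terms evaluated at $0$ vanish ($(\phi_1)_x(0^+)=(u_1^*)_x(0^+)=0$) and the term carrying $(\phi_{\ell_1})_x(y_1^-)$ vanishes, so
\[
0=\sum_{i=1}^{\ell_1-1}\frac{\hat d_i(k_i\hat p_i-k_{i+1})(\phi_i)_x(x_i^-)}{\hat p_i\phi_i(x_i^-)}
+\sum_{i=1}^{\ell_1}\int_{\Omega_i\cap(0,y_1)}\left[\frac{\hat d_ik_i(\phi_i)_x^2}{\phi_i^2}+\frac{r_i(k_i-u_i^*)^2}{k_i}\right]{\rm d}x
-d_{\ell_1}(u^*_{\ell_1})_x(y_1^-).
\]
Every summand of the first sum is positive because $(\phi_i)_x(x_i^-)<0$ while $k_i\hat p_i-k_{i+1}<0$ (from $\mathbf{\overline k}\gg\mathbf{\hat p}$); the integral is strictly positive because $(\phi_1)_x\not\equiv 0$ on $\Omega_1\cap(0,y_1)$; and $-d_{\ell_1}(u^*_{\ell_1})_x(y_1^-)\ge 0$ since $(u^*_{\ell_1})_x<0$, with equality only when $y_1=L$. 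So the right-hand side is strictly positive, which is absurd. Part $\rm(ii)$ follows the same pattern after reflecting: one starts from $x=L$ — where Lemma~\ref{lemma2.1-n} now gives $u_n^*(L^-)<k_n$ and $(u_i^*)_x>0$ — uses the subinterval $[y_1',L]$ with $y_1'$ the last zero of $(\phi_\ell)_x$ before $L$, and invokes $\mathbf{\hat p}\gg\mathbf{\overline k}$ (so $k_i\hat p_i-k_{i+1}>0$), all signs flipping compatibly.

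\emph{Main obstacle.} The crux — and the reason \eqref{uv4-1} rather than \eqref{id-1} is the right tool — is that without any ordering between $\mathbf d$ and $\mathbf{\hat d}$ the diffusion-difference integrals $(d_i-\hat d_i)\int(\phi_i)_x(u^*_i)_x$ in \eqref{id-1} have no definite sign, whereas \eqref{uv4-1} absorbs the whole interior into the manifestly nonnegative integrand $\hat d_ik_i(\phi_i)_x^2/\phi_i^2+r_i(k_i-u_i^*)^2/k_i$ and isolates all decisive sign information in the interface and boundary terms, which are pinned down precisely by $\mathbf{\hat p}$ lying on the far side of $\mathbf{\overline k}$ from $\mathbf p$. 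The delicate point is to choose the subinterval so that the interface sum, the boundary term at its right endpoint, and the monotonicity of $\mathbf u^*$ all push in the same direction; taking the endpoint to be the first critical point of $\Phi$ achieves this, since it forces $(\phi_i)_x(x_i^-)$, $k_i\hat p_i-k_{i+1}$ and $(u^*_{\ell_1})_x(y_1^-)$ to have matching signs.
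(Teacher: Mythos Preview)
Your proof is correct and follows essentially the same approach as the paper: reduce to $\lambda_1\ne 0$ via continuity in $\mathbf{\hat d}$ and Lemma~\ref{theorem2.3}, then obtain a contradiction from identity \eqref{uv4-1} applied on the subinterval $[0,y_1]$ ending at the first critical point of the eigenfunction. The only cosmetic difference is that the paper first integrates \eqref{m3-a} over $(0,x)$ to show $(\phi_1)_x<0$ on all of $(0,x_1]$ (forcing the critical point into some patch $\ell_0\ge 2$), while you infer local negativity from $(\phi_1)_{xx}(0^+)<0$; either route feeds the same sign information into \eqref{uv4-1} and yields the same contradiction.
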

\begin{proof}
	We only prove $\rm (i)$, and $\rm (ii)$ can be treated similarly.
	Similar to Lemma \ref{lemma2.5}, it suffices to show that $\lambda_1 (\mathbf {\hat p},\mathbf {\hat d},\mathbf 1-\mathbf{u^*}/\mathbf k) \ne 0$ for $(\mathbf {\hat p},\mathbf {\hat d})\in \mathcal L_3$.
	Suppose to the contrary that $\lambda_1 (\mathbf {\hat p},\mathbf {\hat d},\mathbf 1-\mathbf{u^*}/\mathbf k) = 0$ for some $(\mathbf {\hat p},\mathbf {\hat d})\in \mathcal L_3$ and denote the corresponding eigenfunction by $\mathbf \phi=( \phi_1,\cdots,\phi_n)$ with $\phi_i(x)>0$ for $x\in\overline\Omega_i$ and $i=1,\cdots,n$.
	
	For $x\in(0,x_1]$, integrating \eqref{m3-a} over $(0,x)$ and noticing that $\lambda_1 (\mathbf {\hat p},\mathbf {\hat d},\mathbf 1-\mathbf{u^*}/\mathbf k) = 0$, we obtain that
	\begin{equation*}
		\hat{d}_1(\phi_{1})_{x}(x^-) =-\int_{0}^{x}{r_1\phi_{1}\left(1-\dfrac{u^*_1}{k_1}\right)}{\rm d}x<0,
	\end{equation*}
	where we have used Lemma \ref{lemma2.1-n} (i) in the last step. This combined with \eqref{m3-c} implies that
	\begin{equation*}
		y_0=\inf \{y:\exists \;2\le \ell\le n \text{ s.t. } y\in (x_{\ell-1},x_\ell]\;\;\text{and}\;\;(\phi_{\ell})_x(y^-)=0\}
	\end{equation*}
	is well-defined with $x_{\ell_0-1}<y_0\le x_{\ell_0}$ for some $\ell_0$ with $2\le \ell_0\le n$. Thus,
	\begin{equation}\label{lm4-6}
		(\phi_{\ell_0})_x(y_0^-)=0,\;\;(\phi_{\ell})_x(x^-)<0\;\;\text{for}\;\;x\in(x_{\ell-1}, x_{\ell}]\cap(0,y_0),\;\;\ell=1,\cdots,\ell_0.
	\end{equation}
	Noticing that $\lambda_1 (\mathbf {\hat p},\mathbf {\hat d},\mathbf 1-\mathbf{u^*}/\mathbf k) = 0$ and substituting $a=0$, $b=y_0$, $\ell_*=1$ and $\ell^*=\ell_0$ into \eqref{uv4-1}, we have
	\begin{equation*}
		\begin{aligned}
			0=&\sum_{i=1}^{\ell_0-1} \dfrac{\hat{d}_i(k_i\hat{p}_i-k_{i+1})(\phi_{i})_{x}(x_i^-)}{\hat{p}_i\phi_i(x_i^-)}+d_{1}(u^*_{1})_x(0^+)\\			&+\sum_{i=1}^{\ell_0}\int_{\Omega_i\cap(0,y_0)}\left[\dfrac{\hat{d}_ik_i(\phi_{i})_{x}^2}{\phi_i^2}+\dfrac{r_i(k_i-u_i^*)^2}{k_i}  \right]{\rm d}x-d_{\ell_0}(u^*_{\ell_0})_x(y_0^-)\\
			&+\dfrac{\hat{d}_{\ell_0}k_{\ell_0}(\phi_{\ell_0})_{x}(y_0^-)}{\phi_{\ell_0}(y_0^-)}-\dfrac{\hat{d_{1}}k_{1}(\phi_{1})_{x}(0^+)}{\phi_{1}(0^+)}>0,
		\end{aligned}
	\end{equation*}
	where we have used \eqref{m2c}, \eqref{m3-c}, \eqref{lm4-6}, $\mathbf{ \overline{k}}\gg \mathbf{\hat p}$ and Lemma \ref{lemma2.1-n} in the last step. This leads to a contradiction and  completes the proof.
\end{proof}	
The following result is a direct consequence of Lemmas \ref{lemma2.5} and \ref{lemma2.5-s}.
\begin{theorem}\label{theorem2.8}
	Let $\mathbf{\overline k}$ be defined in \eqref{ok}. Then the following statements  hold for model \eqref{m1}:
	\begin{enumerate}
		\item [{$\rm (i)$}] If $\mathbf p\gg \mathbf{\overline{k}}$, then the semi-trivial steady state $(\mathbf{u^*},\mathbf 0)$ is unstable for $(\mathbf {\hat p},\mathbf {\hat d})\in \mathcal L_1\cup\mathcal L_3 $ and stable for $(\mathbf {\hat p},\mathbf {\hat d})\in \mathcal L_2$, where $\mathcal L_1,\mathcal L_2$ are defined in \eqref{l1l2} and $\mathcal L_3$ is defined in \eqref{l3};
		\item [{$\rm (ii)$}] If $\mathbf{\overline{k}}\gg\mathbf p$, then the semi-trivial steady state $(\mathbf{u^*},\mathbf 0)$ is unstable for $(\mathbf {\hat p},\mathbf {\hat d})\in \mathcal S_1\cup\mathcal S_3 $ and stable for $(\mathbf {\hat p},\mathbf {\hat d})\in \mathcal S_2$, where $\mathcal S_1,\mathcal S_2$ are defined in \eqref{s1s2} and $\mathcal S_3$ is defined in \eqref{s3}.
	\end{enumerate}
\end{theorem}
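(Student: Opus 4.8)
The statement is a straightforward synthesis of the sign information for the principal eigenvalue $\lambda_1(\mathbf{\hat p},\mathbf{\hat d},\mathbf 1-\mathbf{u^*}/\mathbf k)$ collected in Lemmas~\ref{lemma2.5} and~\ref{lemma2.5-s} with the linearized stability dichotomy for $(\mathbf u^*,\mathbf 0)$ established at the beginning of this section, namely that $(\mathbf u^*,\mathbf 0)$ is asymptotically stable when $\lambda_1(\mathbf{\hat p},\mathbf{\hat d},\mathbf 1-\mathbf{u^*}/\mathbf k)<0$ and unstable when $\lambda_1(\mathbf{\hat p},\mathbf{\hat d},\mathbf 1-\mathbf{u^*}/\mathbf k)>0$. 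So the plan is simply to run through the parameter regions $\mathcal L_j$ and $\mathcal S_j$ and invoke the appropriate item of the appropriate lemma in each case.

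For part (i), suppose $\mathbf p\gg\mathbf{\overline k}$. If $(\mathbf{\hat p},\mathbf{\hat d})\in\mathcal L_1$ then $\lambda_1(\mathbf{\hat p},\mathbf{\hat d},\mathbf 1-\mathbf{u^*}/\mathbf k)>0$ by Lemma~\ref{lemma2.5}(i), and if $(\mathbf{\hat p},\mathbf{\hat d})\in\mathcal L_3$ then $\lambda_1(\mathbf{\hat p},\mathbf{\hat d},\mathbf 1-\mathbf{u^*}/\mathbf k)>0$ by Lemma~\ref{lemma2.5-s}(i); in both cases the stability dichotomy shows $(\mathbf u^*,\mathbf 0)$ is unstable, hence it is unstable for every $(\mathbf{\hat p},\mathbf{\hat d})\in\mathcal L_1\cup\mathcal L_3$. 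On the other hand, if $(\mathbf{\hat p},\mathbf{\hat d})\in\mathcal L_2$, Lemma~\ref{lemma2.5}(i) gives $\lambda_1(\mathbf{\hat p},\mathbf{\hat d},\mathbf 1-\mathbf{u^*}/\mathbf k)<0$, so $(\mathbf u^*,\mathbf 0)$ is stable. Part (ii) is identical after replacing $\mathbf p\gg\mathbf{\overline k}$ by $\mathbf{\overline k}\gg\mathbf p$ and $(\mathcal L_1,\mathcal L_2,\mathcal L_3)$ by $(\mathcal S_1,\mathcal S_2,\mathcal S_3)$, now using parts (ii) of Lemmas~\ref{lemma2.5} and~\ref{lemma2.5-s} to get $\lambda_1(\mathbf{\hat p},\mathbf{\hat d},\mathbf 1-\mathbf{u^*}/\mathbf k)>0$ on $\mathcal S_1\cup\mathcal S_3$ and $\lambda_1(\mathbf{\hat p},\mathbf{\hat d},\mathbf 1-\mathbf{u^*}/\mathbf k)<0$ on $\mathcal S_2$.

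Since all of the analytic work --- the weighted integral identities of Lemmas~\ref{lemma2.2}--\ref{lemma2.3}, the continuity in $\mathbf{\hat d}$ together with the convexity argument of Lemma~\ref{lemma2.5}, and the sign computation through \eqref{uv4-1} in Lemma~\ref{lemma2.5-s} --- is already in place, there is no real obstacle in this proof; it is genuinely a corollary. The only care required is organizational: one must pair each region with the correct item of the correct lemma and match the sign of $\lambda_1(\mathbf{\hat p},\mathbf{\hat d},\mathbf 1-\mathbf{u^*}/\mathbf k)$ to the correct side of the stability dichotomy. In particular, $\mathcal L_1\cup\mathcal L_3$ (resp. $\mathcal S_1\cup\mathcal S_3$) need not be a disjoint union, but since $\lambda_1(\mathbf{\hat p},\mathbf{\hat d},\mathbf 1-\mathbf{u^*}/\mathbf k)>0$ holds on each of the two pieces, the instability conclusion is unambiguous on their union.
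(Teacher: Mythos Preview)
Your proposal is correct and matches the paper's approach exactly: the paper states that Theorem~\ref{theorem2.8} ``is a direct consequence of Lemmas~\ref{lemma2.5} and~\ref{lemma2.5-s}'' and gives no further argument, relying implicitly on the stability dichotomy for $(\mathbf u^*,\mathbf 0)$ in terms of the sign of $\lambda_1(\mathbf{\hat p},\mathbf{\hat d},\mathbf 1-\mathbf{u^*}/\mathbf k)$ established earlier in the section. Your write-up simply makes that implicit pairing explicit.
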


Note that the mutant species $\mathbf v=(v_1,\cdots,v_n)$ can (resp. cannot) invade when rare if the semi-trivial steady state $(\mathbf u^*,\mathbf 0)$ is unstable (resp. stable).  Then, it follows from Theorem \ref{theorem2.8} that, for fixed $(\mathbf d,\mathbf q)$ with $\mathbf p\gg \mathbf{\overline{k}}$, the mutant species $\mathbf v$ can invade when rare if $(\mathbf {\hat p},\mathbf {\hat d})\in \mathcal L_1\cup\mathcal L_3 $; and for fixed $(\mathbf d,\mathbf q)$ with $\mathbf{\overline{k}}\gg\mathbf p$, the mutant species $\mathbf v$ can invade when rare if $(\mathbf {\hat p},\mathbf {\hat d})\in\mathcal S_1\cup\mathcal S_3 $. Finally, we summarize the results on invasion analysis in the following: (see Tables \ref{tablepk} and \ref{tablekp})

\begin{table}[htbt]
	\centering
	\caption{Summary of invasion analysis for $\mathbf p\gg \mathbf{\overline{k}}$. Here `` $1$ '' (resp. `` $0$ ") represents that species $\mathbf v$ can (resp. cannot) invade when rare.}
	\label{tablepk}
	\vspace{5pt}
	\begin{tabular}{ccc}
		\toprule[1pt] 
		Parameter condition for $\hat{\mathbf p}$ & Parameter condition for $\hat{\mathbf d}$ & Results\\
		\midrule[1pt]
		$\hat{\mathbf p}\gg\mathbf p$ &$\hat{\mathbf d}\ge\mathbf d$ &$0$\\
		$\mathbf p\gg\hat{\mathbf p}$   &$\mathbf d\ge \hat{\mathbf d}$ &$1$ \\
		$\mathbf{ \overline{k}}\gg\mathbf{\hat{p}}$  &none &$1$ \\
		\bottomrule[1pt]
	\end{tabular}
\end{table}
\begin{table}[htbt]
	\centering
	\caption{Summary of invasion analysis for $\mathbf{\overline{k}}\gg \mathbf p$. Here `` $1$ '' (resp. `` $0$ ") represents that species $\mathbf v$ can (resp. cannot) invade when rare.}
	\label{tablekp}
	\vspace{5pt}
	\begin{tabular}{ccc}
		\toprule[1pt] 
		Parameter condition for $\hat{\mathbf p}$ & Parameter condition for $\hat{\mathbf d}$ & Results\\
		\midrule[1pt]
		$\hat{\mathbf p}\gg\mathbf p$ &$\mathbf d \ge \hat{\mathbf d}$ &$1$\\
		$\mathbf p\gg\hat{\mathbf p}$   &$\hat{\mathbf d}\ge\mathbf d$ &$0$ \\
		$\mathbf{\hat{p}}\gg\mathbf{ \overline{k}}$  &none &$1$ \\
		\bottomrule[1pt]
	\end{tabular}
\end{table}

In the following, we further develop the concepts of adaptive dynamics introduced earlier, and show that our result in the two-patch scenario is related to the concept of convergent stable strategy (CSS). For the case $n=2$,
let $p_1$ and $\hat{p}_1$ be the strategies for the resident species $\mathbf{u}$ and mutant species $\mathbf{v}$, respectively, treating all other parameters as identical to isolate their effects.
The invasion fitness is given by the principal eigenvalue $\lambda_1(\hat{p}_1, \mathbf{\hat{d}}, \mathbf{1} - \mathbf{u^*}/\mathbf{k})$. Since   $\mathbf{u^*}$ depends on the resident's strategy  $p_1$, we denote $\lambda_1(\hat{p}_1, \mathbf{\hat{d}}, \mathbf{1} - \mathbf{u^*}/\mathbf{k})$ by  $\lambda_1(p_1, \hat{p}_1)$ in the subsequent analysis.

	\begin{definition}\label{51}\cite{57-Cantrell-2010,lam2022,55-SmithPrice}
		Suppose that $n=2$.
 A strategy $p_1^*$  is called a local evolutionarily stable strategy (ESS) if there exists $\delta>0$ such that $\lambda_1(p_1^*, \hat{p}_1)<0$ for all $\hat{p}_1\in (p_1^*-\delta, p_1^*+\delta)\setminus\{p_1^*\}$. It is called a global ESS if the above condition holds for $\delta=\infty$.
	\end{definition}
	\begin{definition}\label{52}\cite{56-Apaloo,57-Cantrell-2010,lam2022}
		Suppose that $n=2$. The strategy $\hat{p}_1^*$  is called a local neighborhood invader strategy (NIS) if
		there exists $\delta>0$ such that $\lambda_1(p_1, \hat{p}^*_1)>0$ for all $ p_1\in (\hat{p}_1^*-\delta, \hat{p}_1^*+\delta)\setminus\{\hat{p}_1^*\}$. It is called a global NIS if the above condition holds for $\delta=\infty$.
	\end{definition}
	\begin{definition}\label{53}\cite{57-Cantrell-2010,58-LamBMB}
	Suppose that $n=2$.  A strategy $p_1^*$ is called a local convergent stable strategy (CSS) if there exists $\delta>0$ such that
		\begin{equation*}
		\lambda_1(p_1, \hat{p}_1)=\begin{cases}
				<0\;\;\text{if}\;\; p_1^*\le p_1<\hat{p}_1<p_1^*+\delta\;\text{or}\;p_1^*-\delta< \hat{p}_1<p_1\le p_1^*,\\
				>0\;\;\text{if}\;\; p_1^*\le \hat{p}_1<p_1<p_1^*+\delta\;\text{or}\;p_1^*-\delta< p_1<\hat{p}_1\le p_1^*.
			\end{cases}
		\end{equation*}
It is called a global CSS if the above condition holds for $\delta=\infty$.		
	\end{definition}

	\begin{remark}
		For the two-patch case, it was shown in \cite{36-Maciel-2020} that the IFD strategy $\overline{\mathbf{k}}=k_2/k_1$ is a global ESS and NIS. 
		Our result in Theorem \ref{theorem2.8} (or Lemma \ref{theorem2.3}) implies that the IFD strategy is a global CSS.
	\end{remark}

\section{Global dynamics}
In the subsequent analysis, we investigate the global dynamics of of model \eqref{m1} using the theory of monotone dynamical systems. Subsect. \ref{ss1} is devoted to the nonexistence of positive steady state for model \eqref{m1}, while the main result is stated in Subsect. \ref{ss2}. We note that partial proofs in this section are inspired by the method developed in \cite{36-Maciel-2020,51-Zhou-2016}.

\subsection{Nonexistence of positive steady state}\label{ss1}
Denote by $(\mathbf u,\mathbf v)$ the positive steady state of \eqref{m1} if it exists, where
\begin{equation*}
	\mathbf u=(u_1(x),\cdots,u_n(x))\gg\mathbf0\;\;\text{and}\;\;\mathbf v=(v_1(x),\cdots,v_n(x))\gg\mathbf0.
\end{equation*}
Then $(\mathbf u,\mathbf v)$  is a positive solution of
\begin{subequations}\label{m4}
	\begin{numcases}
		\ds d_i (u_{i})_{xx}+r_i u_i\left(1-\dfrac{u_i+v_i}{k_i}\right)=0, \;\;x\in \Omega_i,\;\; i=1,2,\cdots,n, \label{m4-a}\\
		\ds \hat{d}_i (v_{i})_{xx}+r_i v_i\left(1-\dfrac{u_i+v_i}{k_i}\right)=0,\;\;\; x\in \Omega_i,\;\; i=1,2,\cdots,n,\label{m4-b}\\
		u_{i+1}(x_i^{+})=p_i u_i(x_i^{-}),\;d_{i+1}( u_{i+1})_{x}(x_i^{+})=d_i ( u_{i})_{x}(x_i^{-}),\;\;\label{m4-c}\\
		v_{i+1}(x_i^{+})=\hat{p}_iv_i(x_i^{-}),\;\hat{d}_{i+1}( v_{i+1})_{x}(x_i^{+})=\hat{d}_i ( v_{i})_{x}(x_i^{-}),\label{m4-d}\\
		( u_1)_x(0^{+})=(u_n)_x(L^{-})=( v_1)_x(0^{+})=( v_n)_x(L^{-})=0.\label{m4-e}
	\end{numcases}
\end{subequations}

In the following, we obtain some \textit{a priori} estimates on the positive steady state $(\mathbf u,\mathbf v)$. The first one is for the case $n=2$.
\begin{lemma}\label{lemma3.1}
	Suppose that $n=2$. Let $\mathbf u,\mathbf v\in C^2(\overline{\Omega}_1)\times C^2(\overline{\Omega}_2)$ be the positive solution of \eqref{m4} and let $\mathbf{\overline k}$ be defined in \eqref{ok}. Then the following statements hold:
	\begin{enumerate}
		\item [{$\rm (i)$}]  If $\mathbf p,\mathbf{\hat p}\gg\mathbf{ \overline{k}}$, then
		$(u_{1})_{x}(x_1^{-}),(u_{2})_{x}(x_1^{+}), (v_{1})_{x}(x_1^{-}),(v_{2})_{x}(x_1^{+})<0$;
		\item [{$\rm (ii)$}]  If $\mathbf{ \overline{k}}\gg \mathbf p,\mathbf{\hat p} $, then $(u_{1})_{x}(x_1^{-}),(u_{2})_{x}(x_1^{+}), (v_{1})_{x}(x_1^{-}),(v_{2})_{x}(x_1^{+})>0$.
	\end{enumerate}
\end{lemma}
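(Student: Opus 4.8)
The plan is to determine the signs of the two interface derivatives $(u_1)_x(x_1^-)$ and $(v_1)_x(x_1^-)$; the remaining two then follow from the balanced-flux conditions \eqref{m4-c}--\eqref{m4-d}, since $d_1,d_2,\hat d_1,\hat d_2>0$ force $(u_1)_x(x_1^-)$ and $(u_2)_x(x_1^+)$ to share a sign, and likewise $(v_1)_x(x_1^-)$ and $(v_2)_x(x_1^+)$. The key tool is a monotonicity trichotomy on each patch. Writing $z_i=u_i+v_i$, equations \eqref{m4-a}--\eqref{m4-b} show that on $\Omega_1$ the functions $(u_1)_{xx}$, $(v_1)_{xx}$ and $z_1-k_1$ all have the same sign pointwise. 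Combined with the no-flux conditions $(u_1)_x(0^+)=(v_1)_x(0^+)=0$, a continuation (connectedness) argument then yields exactly one of: (a) $z_1<k_1$ on $(0,x_1]$ with $u_1,v_1$ strictly decreasing there; (b) $z_1>k_1$ on $(0,x_1]$ with $u_1,v_1$ strictly increasing there; (c) $z_1\equiv k_1$ on $[0,x_1]$ with $u_1\equiv u_1(0)$, $v_1\equiv v_1(0)$ and $u_1(0)+v_1(0)=k_1$. The idea: if $z_1<k_1$ near $0$, then $u_1,v_1$ are concave and, starting from zero slope, strictly decreasing, so $z_1$ keeps decreasing and never returns to $k_1$; branch (c) is the degenerate threshold, handled by uniqueness for the smooth ODE system with Cauchy data at $x=0$. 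The same trichotomy holds on $\Omega_2$, anchored instead at $x=L$: $u_2,v_2$ strictly increasing on $(x_1,L)$ if $z_2(L^-)<k_2$, strictly decreasing if $z_2(L^-)>k_2$, and constant with $u_2(L)+v_2(L)=k_2$ if $z_2(L^-)=k_2$.

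Granting the trichotomy, all four interface derivatives carry one common sign $\sigma\in\{-1,0,1\}$: the two patch-$1$ values agree, the two patch-$2$ values agree, and the flux conditions link the two patches. In case (i), where $p_1,\hat p_1>k_2/k_1$, I will exclude $\sigma=0$ and $\sigma=1$. If $\sigma=0$, then all four components are constants $c_1,c_2,c_1',c_2'$ with $c_1+c_2=k_1$, $c_1'+c_2'=k_2$ and, by \eqref{m4-c}--\eqref{m4-d}, $c_1'=p_1c_1$, $c_2'=\hat p_1c_2$, so $k_2=p_1c_1+\hat p_1c_2>(k_2/k_1)(c_1+c_2)=k_2$, which is absurd. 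If $\sigma=1$, the trichotomy forces $z_1(x_1^-)>k_1$ and $z_2(x_1^+)<k_2$, while $u_2(x_1^+)=p_1u_1(x_1^-)$ and $v_2(x_1^+)=\hat p_1v_1(x_1^-)$ with $p_1,\hat p_1>k_2/k_1$ give $z_2(x_1^+)/k_2>z_1(x_1^-)/k_1>1$, contradicting $z_2(x_1^+)<k_2$. Hence $\sigma=-1$, and the trichotomy says $u_1,v_1$ decrease on $(0,x_1)$ and $u_2,v_2$ decrease on $(x_1,L)$, so $(u_1)_x(x_1^-),(u_2)_x(x_1^+),(v_1)_x(x_1^-),(v_2)_x(x_1^+)<0$. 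Case (ii), where $\overline{\mathbf k}\gg\mathbf p,\hat{\mathbf p}$ (i.e. $p_1,\hat p_1<k_2/k_1$), is the mirror image: the same two computations rule out $\sigma=0$ and $\sigma=-1$ (one gets $k_2<k_2$, respectively $z_2(x_1^+)/k_2<z_1(x_1^-)/k_1<1$), leaving $\sigma=1$, i.e. all four derivatives are positive.

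The main obstacle is establishing the monotonicity trichotomy --- in particular the continuation step guaranteeing that the combined density $z_i$ stays strictly on one side of the carrying capacity $k_i$ across each whole patch; once that is in hand, the flux matching and the interface jumps (which is where the hypotheses $p_1,\hat p_1>k_2/k_1$, resp. $<k_2/k_1$, enter) reduce the remainder to routine sign bookkeeping. A secondary technical point is the degenerate branch $z_i\equiv k_i$, dispatched via ODE uniqueness at the no-flux endpoint; it is precisely there that the strictness of the hypotheses is used.
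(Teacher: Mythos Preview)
Your argument is correct and takes a genuinely different route from the paper. The paper proceeds by deriving the integral identity
\[
0=\dfrac{d_1(k_1p_1-k_{2})(u_{1})_{x}(x_1^-)}{p_1u_1(x_1^-)}+
\sum_{i=1}^{2}\int_{x_{i-1}}^{x_i}\left[\dfrac{d_ik_i(u_{i})_{x}^2}{u_i^2}+\dfrac{r_i(k_i-u_i-v_i)^2}{k_i}\right]{\rm d}x
\]
(obtained by dividing \eqref{m4-a} by $u_i/k_i$, integrating, and subtracting the total-mass balance of \eqref{m4-a}--\eqref{m4-b}). If $(u_1)_x(x_1^-)\ge0$, the sign of $k_1p_1-k_2$ forces every term to vanish, hence $u_i+v_i\equiv k_i$; the jump conditions then give the same contradiction $k_2>k_2$ that you reach. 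Your approach instead establishes a pointwise trichotomy on each patch by a continuation argument from the no-flux endpoint, pinning down the sign of $z_i-k_i$ globally on $\overline{\Omega}_i$ and then reading off the common sign $\sigma$ via flux matching. The ODE-uniqueness step disposing of the degenerate branch $z_i\equiv k_i$ is sound because the system is smooth. What your method buys is a self-contained, more geometric argument that never invokes the integral-identity machinery; what the paper's method buys is brevity and reuse, since the same identity (in the form \eqref{uv4-1}) is the workhorse behind several other results in the paper (Lemmas \ref{lemma2.3} and \ref{lemma2.5-s}), so for the authors it is essentially free.
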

\begin{proof}
	We only prove (i), since (ii) can be proved similarly.
	Using arguments similar to those in the proof of \eqref{uv4-1}, we obtain that
	\begin{equation}\label{uv2}
		\begin{aligned}
			0=&\dfrac{d_1(k_1p_1-k_{2})(u_{1})_{x}(x_1^-)}{p_1u_1(x_1^-)}+
			\sum_{i=1}^{2}\int_{x_{i-1}}^{x_i}\left[\dfrac{d_ik_i(u_{i})_{x}^2}{u_i^2}+\dfrac{r_i(k_i-u_i-v_i)^2}{k_i}  \right]{\rm d}x.
		\end{aligned}
	\end{equation}
	Suppose to the contrary that $(u_{1})_{x}(x_1^{-})\ge0$. This combined with $\mathbf p\gg\mathbf{\overline{k}}$ implies that $u_i(x)+v_i(x)= k_i$ for $x \in\overline\Omega_i$ with $i=1,2$. Then, by \eqref{m4-c}-\eqref{m4-d},
	\begin{equation*} k_2=u_2(x_1^+)+v_2(x_1^+)=p_1u_1(x_1^-)+\hat{p}_1v_1(x_1^-)>k_2,
	\end{equation*}
	where we have used $\mathbf p,\mathbf {\hat p}\gg\mathbf{\overline{k}}$ in the last step. This leads to a contradiction, and consequently, $(u_{1})_{x}(x_1^-)< 0$. By \eqref{m4-c} again, we have $(u_{2})_{x}(x_1^+)< 0$. Similarly, we can also obtain that $(v_{1})_{x}(x_1^-),(v_{2})_{x}(x_1^+)< 0$. This completes the proof.
\end{proof}

The second one gives some identities for later use.
\begin{lemma}\label{lemma3.2}
		Let $\mathbf u,\mathbf v\in C^2(\overline{\Omega}_1)\times\cdots\times C^2(\overline{\Omega}_n)$ be the positive solution of \eqref{m4}.
	\begin{enumerate}
		\item [$\rm (i)$] Suppose that $x_{i-1}\le a<b\le x_{i}$ with $i=1,\cdots, n$. Then
		\begin{equation}\label{3m}
			\begin{split}
				\int_{a}^{b} r_i\left(k_i-u_i-v_i\right) {\rm d}x&= -\dfrac{d_ik_i(u_{i})_{x}(b^-)}{u_i(b^-)}+\dfrac{d_ik_i(u_{i})_{x}(a^+)}{u_i(a^+)}-d_ik_i\int_{a}^{b}\dfrac{(u_{i})_{x}^2}{u_i^2}{\rm d}x\\
				&=-\dfrac{\hat{d}_ik_i(v_{i})_{x}(b^-)}{v_i(b^-)}+\dfrac{\hat{d}_ik_i(v_{i})_{x}(a^+)}{v_i(a^+)} -\hat{d}_ik_i\int_{a}^{b}\dfrac{(v_{i})_{x}^2}{v_i^2}{\rm d}x.
			\end{split} 		
		\end{equation}
		Moreover, if $[d_i(u_{i})_{x}+\hat d_i(v_{i})_{x}](a^+)\ge (\text{resp.} >) [d_i(u_{i})_{x}+\hat d_i(v_{i})_{x}](b^-)$, then \begin{equation}\label{intrk}
			\int_{a}^{b} r_i\left(k_i-u_i-v_i\right) {\rm d}x\ge(\text{resp.} >)  0.
		\end{equation}	
		\item [$\rm (ii)$] Suppose that $x_{\ell_*-1}\le a< x_{\ell_*}$ and $x_{\ell^*-1}< b \le x_{\ell^*}$ for some $\ell_*$ and $\ell^*$ with
		$1\le \ell_*< \ell^*\le n$. Then
		\begin{equation}\label{3m5}
			\begin{aligned}
				0=&\sum_{i=\ell_*}^{\ell^*-1}(\prod_{\ell=i+1}^{\ell^*}p_\ell)d_{i}(\hat{p}_i-p_i)(u_{i})_{x}(x_i^-)v_i(x_i^-)+
				\hat {d}_{\ell^*}p_{\ell^*}(v_{\ell^*})_x(b^-)u_{\ell^*}(b^-)\\ &-\hat {d}_{\ell_*}(\prod_{\ell=\ell_*}^{\ell^*}p_\ell)(v_{\ell_*})_x(a^+)u_{\ell_*}(a^+) +{d}_{\ell_*}(\prod_{\ell=\ell_*}^{\ell^*}p_\ell)(u_{\ell_*})_x(a^+)v_{_{\ell_*}}(a^+)\\
				&-{d}_{\ell^*}p_{\ell^*}(u_{\ell^*})_x(b^-)v_{_{\ell^*}}(b^-) +\sum_{i=\ell_*}^{\ell^*}(\prod_{\ell=i}^{\ell^*}p_\ell)(d_i-\hat d_i)\int_{\Omega_i\cap(a,b)}(v_{i})_{x}(u_{i})_{x}{\rm d}x.
			\end{aligned}
		\end{equation}
		Here we define $p_n=1$.
	\end{enumerate}
\end{lemma}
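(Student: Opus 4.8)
The plan is to treat the two parts separately, in each case reusing the weighted integration‑by‑parts mechanism behind \eqref{uv4-1} (for the equalities in part (i)) and behind \eqref{id-1} (for part (ii)).

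For the two equalities in \eqref{3m}: since $u_i$ is continuous and strictly positive on the compact set $[a,b]\subset\overline\Omega_i$, it is bounded below by a positive constant there, so dividing \eqref{m4-a} by $u_i/k_i$ is legitimate and turns the reaction term into $r_i(k_i-u_i-v_i)$. Integrating the resulting relation $d_ik_i(u_i)_{xx}/u_i+r_i(k_i-u_i-v_i)=0$ over $(a,b)$ and using $\int_a^b (u_i)_{xx}/u_i\,{\rm d}x=(u_i)_x(b^-)/u_i(b^-)-(u_i)_x(a^+)/u_i(a^+)+\int_a^b (u_i)_x^2/u_i^2\,{\rm d}x$ gives the first line of \eqref{3m}; repeating the computation with \eqref{m4-b}, $\hat d_i$, $v_i$ in place of \eqref{m4-a}, $d_i$, $u_i$ gives the second. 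For the ``moreover'' clause I would instead add \eqref{m4-a} and \eqref{m4-b} and integrate over $(a,b)$: writing $w=u_i+v_i$, the diffusion terms give $[d_i(u_i)_x+\hat d_i(v_i)_x](b^-)-[d_i(u_i)_x+\hat d_i(v_i)_x](a^+)$ and the reaction terms give $\int_a^b r_iw(1-w/k_i)\,{\rm d}x$, whence $\int_a^b r_iw(1-w/k_i)\,{\rm d}x=[d_i(u_i)_x+\hat d_i(v_i)_x](a^+)-[d_i(u_i)_x+\hat d_i(v_i)_x](b^-)$. Combining this with the elementary identity $r_i(k_i-w)=r_iw(1-w/k_i)+(r_i/k_i)(k_i-w)^2$ yields $\int_a^b r_i(k_i-w)\,{\rm d}x=\big([d_i(u_i)_x+\hat d_i(v_i)_x](a^+)-[d_i(u_i)_x+\hat d_i(v_i)_x](b^-)\big)+\int_a^b (r_i/k_i)(k_i-w)^2\,{\rm d}x$, and since the last integral is nonnegative, \eqref{intrk} follows in both the $\ge$ and the $>$ versions. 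This short algebraic rearrangement is really the only non‑mechanical point in part (i).

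For the identity \eqref{3m5} in part (ii) I would multiply \eqref{m4-b} by $(\prod_{\ell=i}^{\ell^*}p_\ell)u_i$ and \eqref{m4-a} by $(\prod_{\ell=i}^{\ell^*}p_\ell)v_i$, integrate each over $\Omega_i\cap(a,b)$, sum over $i=\ell_*,\dots,\ell^*$, and subtract. Both reaction terms equal $r_iu_iv_i(1-(u_i+v_i)/k_i)$ up to the common weight, so they cancel, leaving $\sum_i(\prod_{\ell=i}^{\ell^*}p_\ell)\int_{\Omega_i\cap(a,b)}\big(\hat d_i(v_i)_{xx}u_i-d_i(u_i)_{xx}v_i\big)\,{\rm d}x=0$. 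Integrating by parts in each patch produces the volume terms $(\prod_{\ell=i}^{\ell^*}p_\ell)(d_i-\hat d_i)\int_{\Omega_i\cap(a,b)}(v_i)_x(u_i)_x\,{\rm d}x$, the endpoint contributions at $a^+$ (from patch $\ell_*$, weight $\prod_{\ell=\ell_*}^{\ell^*}p_\ell$) and at $b^-$ (from patch $\ell^*$, weight $p_{\ell^*}$) that are exactly the four boundary terms in \eqref{3m5}, and one boundary term at each interior interface $x_i$ with $\ell_*\le i\le \ell^*-1$. The only delicate point — and the main obstacle of the lemma — is collapsing these interface terms: at $x_i$ the patch‑$i$ term carries weight $\prod_{\ell=i}^{\ell^*}p_\ell=p_i\prod_{\ell=i+1}^{\ell^*}p_\ell$ and the term from patch $i+1$ carries weight $\prod_{\ell=i+1}^{\ell^*}p_\ell$; substituting $u_{i+1}(x_i^+)=p_iu_i(x_i^-)$, $d_{i+1}(u_{i+1})_x(x_i^+)=d_i(u_i)_x(x_i^-)$, $v_{i+1}(x_i^+)=\hat p_iv_i(x_i^-)$, $\hat d_{i+1}(v_{i+1})_x(x_i^+)=\hat d_i(v_i)_x(x_i^-)$ from \eqref{m4-c}--\eqref{m4-d}, the $\hat d_i(v_i)_xu_i$ parts cancel and precisely $(\prod_{\ell=i+1}^{\ell^*}p_\ell)d_i(\hat p_i-p_i)(u_i)_x(x_i^-)v_i(x_i^-)$ survives, which is the $i$‑th summand of \eqref{3m5} (with the convention $p_n=1$ used when $\ell^*=n$). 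Collecting all pieces gives \eqref{3m5}. One could alternatively observe that, upon setting the eigenvalue to zero, this is word‑for‑word the computation behind \eqref{id-1} with $\phi_i$ and $u_i^*$ replaced by $v_i$ and $u_i$, and simply invoke that parallel.
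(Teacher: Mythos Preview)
Your proposal is correct and follows essentially the same route as the paper: the equalities in \eqref{3m} come from dividing \eqref{m4-a}--\eqref{m4-b} by $u_i/k_i$ and $v_i/k_i$ and integrating by parts, the identity \eqref{3m5} is obtained exactly as in the derivation of \eqref{id-1} with $(u_i^*,\phi_i)$ replaced by $(u_i,v_i)$, and your argument for \eqref{intrk} (adding the two equations, integrating, and using $r_i(k_i-w)=r_iw(1-w/k_i)+(r_i/k_i)(k_i-w)^2$) is precisely the computation behind the references the paper cites for that step. In fact you spell out more detail than the paper, which simply points to Lemma~\ref{lemma2.3} and to \cite{45-Ge-2024,36-Maciel-2020}.
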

\begin{proof}
	Using arguments similar to those in the proof of \eqref{id-1}, we can obtain that (ii) holds.
	Dividing \eqref{m4-a}  and \eqref{m4-b} by $u_i/k_i$ and  $v_i/k_i$, respectively, and integrating by parts, we obtain that \eqref{3m} holds.
	The proof for the remaining part of  (i) can be found in \cite[Lemma 4.3]{45-Ge-2024}
	(see also \cite[Theorem 5.1]{36-Maciel-2020}).
\end{proof}
The third one is on the signs of $(u_i)_x$ and $(v_i)_x$. 	
\begin{lemma}\label{noccur}
	 Let $\mathbf u,\mathbf v\in C^2(\overline{\Omega}_1)\times\cdots\times C^2(\overline{\Omega}_n)$ be the positive solution of \eqref{m4} and suppose that $x_{i-1}\le a<b\le x_{i}$ with $i=1,\cdots, n$. Then the following two cases cannot occur:
	\begin{enumerate}
		\item [{\rm (i)}] $(u_i)_x,(v_i)_x\ge0$  with $(u_i)_x\not\equiv0$ or $(v_i)_x\not\equiv0$ in $(a,b)$, and $$\max\{(u_i)_{xx}( a^+),(v_i)_{xx}( a^+)\}\ge 0, \;\;\min\{(u_i)_{xx}(b^-),(v_i)_{xx}(b^-)\}\le 0;$$
		\item [{\rm (ii)}] $(u_i)_x,(v_i)_x\le0$ with $(u_i)_x\not\equiv0$ or $(v_i)_x\not\equiv0$ in $(a,b)$, and $$\min\{(u_i)_{xx}( a^+),(v_i)_{xx}( a^+)\}\le 0,\;\;\max\{(u_i)_{xx}(b^-),(v_i)_{xx}(b^-)\}\ge 0.$$
	\end{enumerate}
\end{lemma}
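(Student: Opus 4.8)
The plan is to work entirely inside the single patch $\Omega_i$ and to exploit the fact that $\mathbf u$ and $\mathbf v$ obey the \emph{same} per-capita growth law there. From \eqref{m4-a}--\eqref{m4-b}, on $(x_{i-1},x_i)$ one has $(u_i)_{xx}=-\tfrac{r_i}{d_i}u_i\bigl(1-\tfrac{u_i+v_i}{k_i}\bigr)$ and $(v_i)_{xx}=-\tfrac{r_i}{\hat d_i}v_i\bigl(1-\tfrac{u_i+v_i}{k_i}\bigr)$. Since $u_i,v_i>0$ and $d_i,\hat d_i>0$, at every point of the patch $(u_i)_{xx}$ and $(v_i)_{xx}$ carry the \emph{same} sign, namely the sign of $u_i+v_i-k_i$. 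Consequently $\max\{(u_i)_{xx},(v_i)_{xx}\}\ge 0$ at a point is equivalent to $u_i+v_i\ge k_i$ there, and $\min\{(u_i)_{xx},(v_i)_{xx}\}\le 0$ is equivalent to $u_i+v_i\le k_i$ there. This collapses the hypotheses on the second derivatives at $a^+$ and $b^-$ into comparisons between $u_i+v_i$ and the carrying capacity $k_i$.

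In case (i), the sign hypotheses become $(u_i+v_i)(a^+)\ge k_i$ and $(u_i+v_i)(b^-)\le k_i$, while $(u_i)_x,(v_i)_x\ge 0$ on $(a,b)$ makes $u_i+v_i$ nondecreasing on the interval $[a,b]$ (using that $u_i,v_i$ are $C^2$, hence continuous, on $\overline{\Omega}_i$), so $(u_i+v_i)(a^+)\le (u_i+v_i)(b^-)$. Chaining these gives $k_i\le (u_i+v_i)(a^+)\le (u_i+v_i)(b^-)\le k_i$, hence $u_i+v_i\equiv k_i$ on $[a,b]$. Substituting back into \eqref{m4-a}--\eqref{m4-b} yields $(u_i)_{xx}\equiv(v_i)_{xx}\equiv 0$ on $[a,b]$, so $(u_i)_x$ and $(v_i)_x$ are constants on $[a,b]$; being nonnegative and summing to $(u_i+v_i)_x\equiv 0$, both must vanish identically on $(a,b)$, which contradicts the assumption that $(u_i)_x\not\equiv 0$ or $(v_i)_x\not\equiv 0$ there. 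Case (ii) is handled identically after reversing all inequalities: the hypotheses then read $(u_i+v_i)(a^+)\le k_i$, $(u_i+v_i)(b^-)\ge k_i$, with $u_i+v_i$ nonincreasing on $[a,b]$, which again forces $u_i+v_i\equiv k_i$ on $[a,b]$ and leads to the same contradiction.

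The only point that requires care is the sign-synchronization of $(u_i)_{xx}$ and $(v_i)_{xx}$: this is exactly where the structural hypothesis that both species share the reaction factor $r_i\bigl(1-\tfrac{u_i+v_i}{k_i}\bigr)$ enters, and without it the $\max$/$\min$ conditions would not translate into a single inequality for $u_i+v_i$. Everything else is an elementary monotonicity-and-continuity argument for the $C^2$ functions $u_i,v_i$ restricted to $[a,b]$, so I do not expect any genuine obstacle.
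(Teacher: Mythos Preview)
Your proof is correct and follows essentially the same route as the paper's: translate the second-derivative sign conditions at $a^+$ and $b^-$ into the inequalities $(u_i+v_i)(a^+)\ge k_i$ and $(u_i+v_i)(b^-)\le k_i$ via \eqref{m4-a}--\eqref{m4-b}, and then confront these with the monotonicity of $u_i+v_i$ on $[a,b]$. The only cosmetic difference is that you push through the equality case $u_i+v_i\equiv k_i$ explicitly before reaching the contradiction, whereas the paper invokes strict monotonicity of $u_i+v_i$ (from $(u_i)_x\not\equiv0$ or $(v_i)_x\not\equiv0$) to get $(u_i+v_i)(a^+)<(u_i+v_i)(b^-)$ in one line.
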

\begin{proof}
	We only show that (i) cannot occur, and (ii) can be treated similarly. Suppose to the contrary that (i) occurs.
	Since
	$$\max\{(u_i)_{xx}( a^+),(v_i)_{xx}(a^+)\}\ge 0 \;\;\text{and}\;\;
	\min\{(u_i)_{xx}(b^-),(v_i)_{xx}(b^-)\}\le 0,$$ it follows from \eqref{m4-a} and \eqref{m4-b} that
	\begin{equation*}
		u_i(a^+)+v_i(a^+)\ge k_i\;\;\text{and}\;\;u_i(b^-)+v_i(b^-)\le k_i,
	\end{equation*}
	which contradicts the fact that $(u_i)_x,(v_i)_x\ge0$ with $(u_i)_x\not\equiv0$ or $(v_i)_x\not\equiv0$ in $(a, b)$. Therefore, (i) can not occur.
\end{proof}

The last one is as follows.
\begin{lemma}\label{add3.6}
	 Let $\mathbf u,\mathbf v\in C^2(\overline{\Omega}_1)\times\cdots\times C^2(\overline{\Omega}_n)$ be the positive solution of \eqref{m4}. If $(u_1)_x(x_1^-),(v_1)_x(x_1^-)\le0$, then $(u_{1})_{x}, (v_{1})_{x}\le 0$  in $\Omega_1$.
\end{lemma}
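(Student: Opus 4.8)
The plan is to reduce the statement to the single assertion that $u_1+v_1\le k_1$ on $\overline\Omega_1$, after which the monotonicity of $u_1$ and $v_1$ is immediate. The starting observation is that, from \eqref{m4-a} and \eqref{m4-b} together with $u_1,v_1>0$ on $\overline\Omega_1$, both $(u_1)_{xx}(x)$ and $(v_1)_{xx}(x)$ have the same sign as $u_1(x)+v_1(x)-k_1$; concretely, $(u_1)_{xx}=\frac{r_1u_1}{d_1k_1}(u_1+v_1-k_1)$ and $(v_1)_{xx}=\frac{r_1v_1}{\hat d_1k_1}(u_1+v_1-k_1)$.

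First I would set $h:=u_1+v_1-k_1\in C^2([0,x_1])$ and show $h\le 0$. Adding \eqref{m4-a} divided by $d_1$ to \eqref{m4-b} divided by $\hat d_1$ gives $h''=\frac{r_1h}{k_1}\bigl(\frac{u_1}{d_1}+\frac{v_1}{\hat d_1}\bigr)$, so in particular $h''>0$ wherever $h>0$. The boundary condition \eqref{m4-e} gives $h'(0^+)=(u_1)_x(0^+)+(v_1)_x(0^+)=0$, and the hypothesis $(u_1)_x(x_1^-),(v_1)_x(x_1^-)\le 0$ gives $h'(x_1^-)\le 0$. Suppose, for contradiction, that $h$ attains a positive maximum at some $\xi\in[0,x_1]$. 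If $\xi\in(0,x_1)$ this is impossible, since $h'(\xi)=0$ and $h''(\xi)\le 0$ would contradict $h''(\xi)>0$. If $\xi=0$, then $h'(0^+)=0$ together with $h''(0^+)>0$ forces $h'>0$, hence $h>h(0)$, just to the right of $0$, contradicting maximality. If $\xi=x_1$, then maximality forces $h'(x_1^-)\ge 0$, so $h'(x_1^-)=0$ by the hypothesis, and then $h''(x_1^-)>0$ forces $h'<0$, hence $h>h(x_1)$, just to the left of $x_1$, again a contradiction. Therefore $h\le 0$ on $[0,x_1]$, i.e. $u_1+v_1\le k_1$ there.

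Finally, with $h\le 0$ the formulas above give $(u_1)_{xx}\le 0$ and $(v_1)_{xx}\le 0$ throughout $[0,x_1]$, so $(u_1)_x$ and $(v_1)_x$ are non-increasing on $[0,x_1]$; combined with $(u_1)_x(0^+)=(v_1)_x(0^+)=0$ this yields $(u_1)_x,(v_1)_x\le 0$ on $\Omega_1$. There is no real obstacle here; the only slightly delicate points are the two boundary cases $\xi\in\{0,x_1\}$ of the maximum argument, where one must combine the Neumann condition (resp. the hypothesis) on $h'$ with the strict inequality $h''>0$ that follows from $h>0$. This is precisely the ``no-overshoot'' mechanism behind Lemma \ref{noccur}, here applied to the combined quantity $h=u_1+v_1-k_1$ on the whole first patch. (Alternatively, one could split into cases according to the sign of $u_1(0)+v_1(0)-k_1$ and invoke Lemma \ref{noccur} directly on suitable subintervals $(0,b)\subset\Omega_1$, but the argument above is shorter and cleaner.)
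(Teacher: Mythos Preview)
Your proof is correct and takes a genuinely different, more elementary route than the paper's. The paper splits into the cases $(u_1)_x(x_1^-)(v_1)_x(x_1^-)=0$ and $(u_1)_x(x_1^-),(v_1)_x(x_1^-)<0$; in the first case it invokes the integral identities of Lemma~\ref{lemma3.2}(i) (the flux-balance inequality \eqref{intrk} together with \eqref{3m}) to force one, and hence both, derivatives to vanish identically; in the second case it locates the last zeros $y_1,y_2$ of $(u_1)_x,(v_1)_x$ in $[0,x_1)$ and reduces to the first case on $[0,\max\{y_1,y_2\}]$. By contrast, you bypass all of this by noting that $(u_1)_{xx}$ and $(v_1)_{xx}$ share the sign of $h=u_1+v_1-k_1$ and that $h$ satisfies $h''=\frac{r_1}{k_1}\bigl(\frac{u_1}{d_1}+\frac{v_1}{\hat d_1}\bigr)h$ with $h'(0^+)=0$ and $h'(x_1^-)\le 0$; a direct maximum-principle argument on this scalar equation then gives $h\le 0$, hence $(u_1)_{xx},(v_1)_{xx}\le 0$, and the conclusion follows from $(u_1)_x(0^+)=(v_1)_x(0^+)=0$. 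Your approach is shorter and self-contained, avoiding the integral machinery of Lemma~\ref{lemma3.2}; the paper's approach, on the other hand, reuses tools already developed for the nonexistence argument in Theorem~\ref{theorem3.6}.
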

\begin{proof}
	Since $(u_1)_x(x_1^-),(v_1)_x(x_1^-)\le0$, we divide the proof into the following two cases:
	\begin{equation*}
		{\rm(a_1)}\;(u_1)_x(x_1^-)(v_1)_x(x_1^-)=0;\;\;{\rm(a_2)}\;(u_1)_x(x_1^-),\;(v_1)_x(x_1^-)<0.
	\end{equation*}
	
	For case (a$_1$), since $0=[d_1(u_1)_x+\hat d_1 (v_1)_x](0^+)\ge[d_1(u_1)_x+\hat d_1 (v_1)_x](x_1^-)$, it follows from Lemma \ref{lemma3.2} (i) that
	\begin{equation}\label{riine}
		\int_{0}^{x_1} r_1\left(k_1-u_1-v_1\right)\ge0.
	\end{equation}
	In view of  $(u_1)_x(x_1^-)(v_1)_x(x_1^-)=0$, we see from \eqref{3m} and \eqref{riine} that
	$(u_1)_x=0$ in $\Omega_1$ or $(v_1)_x=0$ in $\Omega_1$. This combined with \eqref{m4-a}-\eqref{m4-b} implies that
	$(u_1)_x,(v_1)_x=0$ in $\Omega_1$, and the desired result holds.
	
	For case (a$_2$), since $(u_1)_x(0^+)=(v_1)_x(0^+)=0$, it follows  that $$y_1:=\text{sup}\{x\in [0,x_1)|(u_{1})_{x}(x^+)=0\}\;\text{and}\; y_2:=\text{sup}\{x\in [0,x_1)|(v_{1})_{x}(x^+)=0\}$$
	are well-defined with $0\le y_1, y_2<x_1$. Define $y_0:=\max\{y_1,y_2\}$. Clearly, the desired result holds for $y_0=0$. If
	$y_0>0$, $(u_1)_x(y_0^-),(v_1)_x(y_0^-)\le0$ with $(u_1)_x(y_0^-)(v_1)_x(y_0^-)=0$. Then using similar arguments as in the proof of case (a$_1$), we see that $(u_1)_x,(v_1)_x=0$ in $[0,y_0]$, and the desired result holds.
\end{proof}

Now we show the nonexistence of positive steady state for model \eqref{m1}.
\begin{theorem}\label{theorem3.6}
	Let $\mathbf{\overline k}$ be defined in \eqref{ok}. Then model \eqref{m1} has no positive steady state if one of the following conditions holds:
	\begin{enumerate}
		\item [{\rm (i)}] $\mathbf p \gg\mathbf{\overline{k}}$ and $(\mathbf {\hat p},\mathbf {\hat d})\in \mathcal L_1^*\cup \mathcal L_2$, where $\mathcal L_2$ is defined in \eqref{l1l2}, and
		\begin{equation}\label{l1l2*}
			\mathcal L_1^*:=\{(\mathbf{\hat{p}},\mathbf{\hat{d}}):\mathbf p\gg\mathbf {\hat p}\gg  \mathbf{ \overline{k}},\;\mathbf d\ge\mathbf{\hat d}\};
		\end{equation}
		\item [{\rm (ii)}] $\mathbf{\overline{k}}\gg\mathbf p $ and $(\mathbf {\hat p},\mathbf {\hat d})\in \mathcal S_1^*\cup \mathcal S_2$, where $\mathcal S_2$ is defined in \eqref{s1s2}, and	
		\begin{equation}\label{s1s2*}
			\mathcal S_1^*:=\{(\mathbf{\hat{p}},\mathbf{\hat{d}}):\mathbf{ \overline{k}}\gg\mathbf {\hat p}\gg\mathbf p,\;\mathbf d\ge\mathbf{\hat d}\}.
		\end{equation}
	\end{enumerate}
\end{theorem}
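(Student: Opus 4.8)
The plan is to argue by contradiction: suppose \eqref{m4} admits a positive steady state $(\mathbf u,\mathbf v)$ with $\mathbf u\gg\mathbf 0$ and $\mathbf v\gg\mathbf 0$, and derive a contradiction. Cases (i) and (ii) are treated in parallel (the latter lying below the IFD strategy rather than above it), so it suffices to detail (i). Its two subcases, $(\mathbf{\hat p},\mathbf{\hat d})\in\mathcal L_1^*$ and $(\mathbf{\hat p},\mathbf{\hat d})\in\mathcal L_2$, both entail $\mathbf p\gg\mathbf{\overline k}$ and $\mathbf{\hat p}\gg\mathbf{\overline k}$; they differ only in how $\mathbf p$ compares with $\mathbf{\hat p}$ and $\mathbf d$ with $\mathbf{\hat d}$, and that distinction enters only at the very last step. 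Accordingly, the heart of the proof is an a priori estimate, valid for any positive steady state under the standing assumption $\mathbf p,\mathbf{\hat p}\gg\mathbf{\overline k}$: one has $(u_i)_x\le0$ and $(v_i)_x\le0$ throughout each $\overline\Omega_i$ ($i=1,\dots,n$) — in particular $(u_i)_x(v_i)_x\ge0$ on each $\Omega_i$ — with the strict inequalities $(u_i)_x(x_i^-)<0$ and $(v_i)_x(x_i^-)<0$ at every interior interface ($i=1,\dots,n-1$).

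To establish this, I would run a left-to-right analysis. On $\Omega_1$, the computation in the proof of Lemma \ref{lemma3.1} (based on \eqref{uv2}: divide \eqref{m4-a} and \eqref{m4-b} by $u_1/k_1$ and $v_1/k_1$, integrate over $\Omega_1$, and use the interface and no-flux conditions) forces $(u_1)_x(x_1^-)<0$ and $(v_1)_x(x_1^-)<0$, since the alternative would drive $u_1+v_1\equiv k_1$ on $\overline\Omega_1$, whence $k_2=p_1u_1(x_1^-)+\hat p_1v_1(x_1^-)>\tfrac{k_2}{k_1}\bigl(u_1(x_1^-)+v_1(x_1^-)\bigr)=k_2$ by \eqref{m4-c}-\eqref{m4-d}, a contradiction; then Lemma \ref{add3.6} upgrades this to $(u_1)_x,(v_1)_x\le0$ on all of $\Omega_1$, and the balanced-flux conditions carry these signs across $x_1$. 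The sign is propagated through the remaining patches by a first-violation argument: since $(u_i)_{xx}$ and $(v_i)_{xx}$ have the same sign at every point of $\Omega_i$ (read off from \eqref{m4-a}-\eqref{m4-b}), a first point at which $(u_i)_x$ or $(v_i)_x$ turned positive would produce a subinterval of some $\Omega_i$ on which the positions of $u_i+v_i$ relative to $k_i$ at the endpoints realize exactly the configuration that Lemma \ref{noccur} rules out; the no-flux condition at $x=L$ disposes of the last patch. Finally, once $(u_i)_x,(v_i)_x\le0$ holds on every patch, the strict interface inequalities follow as on $\Omega_1$: $(u_i)_x(x_i^-)=0$ would give $(u_i)_{xx}(x_i^-)\ge0$ and $(u_{i+1})_{xx}(x_i^+)\le0$, i.e.\ (by \eqref{m4-a}) $u_i(x_i^-)+v_i(x_i^-)\ge k_i$ while $u_{i+1}(x_i^+)+v_{i+1}(x_i^+)\le k_{i+1}$, contradicting $u_{i+1}(x_i^+)+v_{i+1}(x_i^+)=p_iu_i(x_i^-)+\hat p_iv_i(x_i^-)>k_{i+1}$ (since $p_i,\hat p_i>k_{i+1}/k_i$).

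Granting this estimate, the conclusion is immediate. Apply the identity \eqref{3m5} with $a=0$, $b=L$, $\ell_*=1$ and $\ell^*=n$; by \eqref{m4-e} the four boundary terms at $0$ and $L$ all vanish, leaving
\[
  0=\sum_{i=1}^{n-1}\Bigl(\prod_{\ell=i+1}^{n}p_\ell\Bigr)d_i(\hat p_i-p_i)(u_i)_x(x_i^-)v_i(x_i^-)+\sum_{i=1}^{n}\Bigl(\prod_{\ell=i}^{n}p_\ell\Bigr)(d_i-\hat d_i)\int_{\Omega_i}(u_i)_x(v_i)_x\,{\rm d}x .
\]
If $(\mathbf{\hat p},\mathbf{\hat d})\in\mathcal L_1^*$, then $\hat p_i-p_i<0$ and $d_i-\hat d_i\ge0$; since $(u_i)_x(x_i^-)<0$, $v_i(x_i^-)>0$ and $(u_i)_x(v_i)_x\ge0$ on each $\Omega_i$, the first sum is strictly positive and the second is nonnegative, so the right-hand side is $>0$ — impossible. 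If $(\mathbf{\hat p},\mathbf{\hat d})\in\mathcal L_2$, then $\hat p_i-p_i>0$ and $d_i-\hat d_i\le0$, and the same sign information makes the right-hand side strictly negative — again impossible. Hence \eqref{m4} has no positive solution in case (i); case (ii) follows by the parallel argument (all the derivative signs reversed, $\mathcal L_1^*,\mathcal L_2$ replaced by $\mathcal S_1^*,\mathcal S_2$, and \eqref{3m5} applied exactly as above).

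I expect the main obstacle to be the a priori sign estimate — specifically its propagation through the intermediate patches and the last patch, where, unlike on $\Omega_1$, there is no no-flux boundary available to invoke Lemma \ref{add3.6}; there one must combine Lemma \ref{noccur} with the interface conditions and the ordering $\mathbf p,\mathbf{\hat p}\gg\mathbf{\overline k}$, and the delicate point is to produce a subinterval on which $(u_i)_x$ and $(v_i)_x$ are \emph{simultaneously} controlled, so that Lemma \ref{noccur} can be applied. Once the signs are secured, the remainder is bookkeeping in \eqref{3m5}.
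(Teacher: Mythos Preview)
Your endgame is exactly right: once you know $(u_i)_x,(v_i)_x\le 0$ on every $\overline\Omega_i$ together with strict negativity of $(u_i)_x(x_i^-),(v_i)_x(x_i^-)$ at each interior interface, the identity \eqref{3m5} with $a=0$, $b=L$ disposes of both $\mathcal L_1^*$ and $\mathcal L_2$ in one stroke. Your upgrade from weak to strict negativity at interfaces is also valid (and in fact slicker than what the paper does at that point). The gap is the sign estimate itself for $n\ge 3$.

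First, your opening move on $\Omega_1$ does not work for $n>2$. The identity \eqref{uv2} is a \emph{global} identity on $[0,L]$: it sums over both patches and uses the no-flux condition at $L$ to kill boundary terms; only because there is a single interface does it pin down the sign of $(u_1)_x(x_1^-)$. For general $n$ the analogous identity reads
\[
0=\sum_{i=1}^{n-1}\frac{d_i(k_ip_i-k_{i+1})}{p_iu_i(x_i^-)}(u_i)_x(x_i^-)+\sum_{i=1}^n\int_{\Omega_i}\Bigl[\frac{d_ik_i(u_i)_x^2}{u_i^2}+\frac{r_i(k_i-u_i-v_i)^2}{k_i}\Bigr]{\rm d}x,
\]
which only forces \emph{some} $(u_i)_x(x_i^-)$ to be negative, not the first one; and if you instead integrate only over $\Omega_1$ you pick up uncontrolled flux terms at $x_1$. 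So you have no anchor from which to launch the left-to-right propagation.

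Second, the ``first-violation'' argument via Lemma~\ref{noccur} does not by itself rule out the scenario in which $(u_i)_x$ and $(v_i)_x$ are \emph{positive} across a run of interior patches (the paper's case $(\mathrm d_1)$), nor the scenario in which $(u_i)_x$ and $(v_i)_x$ have \emph{opposite} signs at some interior interface or point (the paper's cases $(\mathrm c_1)$--$(\mathrm c_2)$). Lemma~\ref{noccur} needs both derivatives to have a common sign on a subinterval with compatible second-derivative signs at the ends; producing such a subinterval is exactly the hard part you flag, and your sketch does not supply it.

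The paper closes this gap by induction on $n$. The decisive idea is that if $(u_{\ell_0})_x(x_{\ell_0}^-)$ and $(v_{\ell_0})_x(x_{\ell_0}^-)$ have opposite signs at some interior interface (or at a point in an interior patch), then the restrictions $([\mathbf u]_{\ell_0},[\mathbf v]_{\ell_0})$ form a sub-equilibrium of the truncated $\ell_0$-patch competition system on $[0,x_{\ell_0}]$, and an explicit super-equilibrium built from the single-species steady state and the principal eigenfunction sits above it; by the monotone iteration Theorem~\ref{pss} this forces a positive steady state for the $\ell_0$-patch problem, contradicting the inductive hypothesis. That eliminates sign mismatches. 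One is then left with two cases: derivatives uniformly positive on the interior patches (dispatched by the chain inequality $u_{i+1}(x_i^+)+v_{i+1}(x_i^+)>\hat p_i(u_i+v_i)(x_i^-)>\cdots$ using $\hat{\mathbf p}\gg\overline{\mathbf k}$), or uniformly nonpositive (where Lemma~\ref{add3.6} handles $\Omega_1$, a separate Claim~1 handles $\Omega_n$, and then \eqref{3m5} is applied as in your final step). For $n=2$ your outline and the paper's coincide.
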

\begin{proof}
	We only prove (i), and (ii) can be handled similarly.
	Since  the nonlinear terms of model \eqref{m1} are symmetric, it suffices to
	consider the case $\mathbf p \gg\mathbf{\overline{k}}$ and $(\mathbf {\hat p},\mathbf {\hat d})\in \mathcal L_1^*$.
	The rest of the proof is based on the following three claims.
	
	\emph{Claim 1:}	 If $(u_{i})_{x},(v_{i})_{x}\le 0$ in $\Omega_i$  and  $(u_{i})_{x}(x_{i}^-),(v_{i})_{x}(x_i^-)\le 0$ for  $i=1,\cdots,n-1$, then $(u_{n})_{x},(v_{n})_{x}\le 0$ in $\Omega_n$.
	
	\noindent\emph{Proof of Claim:} Since $(u_n)_x(L^-)=(v_n)_x(L^-)=0$, it follows that $z_1,z_2$ are well-defined with $x_{n-1}\le z_1,z_2\le L$, where
	\begin{equation*}
		\begin{split}
			&z_1:=\text{inf}\{x\in[x_{n-1},x_n]:\exists\; n-1\le i\le n \;\text{s.t.}\;x\in(x_{i-1},x_{i}]\;\text{and}\; (u_{i})_{x}(y^-)=0\},\\
			&z_2:=\text{inf}\{x\in[x_{n-1},x_n]:\exists \;n-1\le i\le n \;\text{s.t.}\;x\in(x_{i-1},x_{i}]\;\text{and}\; (v_{i})_{x}(y^-)=0\},
		\end{split}
	\end{equation*}
	and for fixed $i=1,2$, $z_i\in (x_{\ell_i-1},x_{\ell_i}]$, where
	$\ell_i=n$ if $z_i>x_{n-1}$ and $\ell_i=n-1$ if $z_i=x_{n-1}$.
	
	We first claim that $z_1\le z_2$. Suppose not, then $z_1> z_2$. Thus, we have
	\begin{equation}\label{ncu2}
		(u_{\ell_2})_x(z_2^-)<0\;\;\text{and}\;\;(u_{i})_{x}, (v_i)_x\le 0\;\;\text{in}\;\;\Omega_i\cap(0,z_2)\;\;\text{for}\;\;i=1,\cdots,\ell_2.
	\end{equation}
	Substituting  $a=0$, $b=z_2$, $\ell_*=1$ and $\ell^*=\ell_2$ into \eqref{3m5} yields
	\begin{equation}\label{0z2}
		\begin{aligned}
			0=&\sum_{i=1}^{\ell_2-1}(\prod_{\ell=i+1}^{\ell_2}p_\ell)d_{i}(\hat{p}_i-p_i)(u_{i})_{x}(x_i^-)v_i(x_i^-) +\hat d_{\ell_2}p_{\ell_2}u_{\ell_2}(z_2^-)(v_{\ell_2})_{x}(z_2^-)\\
			&+\sum_{i=1}^{\ell_2}(\prod_{\ell=i}^{\ell_2}p_\ell)(d_i-\hat d_i)\int_{\Omega_i\cap(0,z_2)}(v_{i})_{x}(u_{i})_{x}{\rm d}x-d_{\ell_2}p_{\ell_2}(u_{\ell_2})_{x}(z_2^-)v_{\ell_2}(z_2^-)>0,
		\end{aligned}
	\end{equation}
	where we have used \eqref{ncu2}, $(\mathbf {\hat p},\mathbf {\hat d})\in \mathcal L_1^*$ and the conditions of Claim 1 in the last step.  This leads to a contradiction, and  the claim $z_1\le z_2$ holds.
	
	Then we claim $(u_{n})_x(z_2^-)\le 0$ if $z_{2}\in(x_{n-1},L]$. Clearly, $(u_{n})_x(L^-)=0$, and we only need to consider the case $z_2\in(x_{n-1}, L)$. To the contrary, we have
	\begin{equation*}
		(u_n)_x(z_2)=(u_{n})_x(z_2^-)>0=(u_n)_x(L^-),\;\;(v_n)_x(z_2)=(v_{n})_x(z_2^-)=(v_n)_x(L^-)=0.
	\end{equation*}
	This together with Lemma \ref{lemma3.2} (i) yields
	\begin{equation*}
		0<\int_{z_2}^{L} r_n\left(k_n-u_n-v_n\right) {\rm d}x=-\hat{d}_nk_n\int_{z_2}^{L}\dfrac{(v_{n})_{x}^2}{v_n^2}{\rm d}x\le 0,
	\end{equation*}
	which is a contradiction. Therefore,  the claim holds, i.e., $(u_{n})_x(z_2^-)\le 0$ if $z_{2}\in(x_{n-1},L]$.
	
	Now we claim that $z_1=z_2$. Suppose not, then $x_{n-1}\le z_1<z_2\le L$. Thanks to $(u_n)_x(z_2^-)\le 0$, we will obtain a contradiction for each of the
	following two cases:
	\begin{equation*}
		{\rm(a_1)}\;z_1<z_2,\; (u_n)_x(z_2^-)<0;\;\;{\rm(a_2)}\;z_1<z_2,\; (u_n)_x(z_2^-)=0.
	\end{equation*}	
	For case (a$_1$), noticing that $d_n(u_n)_x(z_1^+)=d_{\ell_1}(u_{\ell _1})_x(z_1^-)=0$, where $\ell_1=n$ if $z_1>x_{n-1}$ and $\ell_1=n-1$ if $z_1=x_{n-1}$, we see that
	$z_3=\text{sup}\{x\in [z_1,z_2):(u_{n})_{x}(x^+)=0\}$ is well-defined with $z_1\le z_3<z_2$. Thus,
	$(u_n)_{xx}(z_3^+)\le 0$, $(u_n)_{x}(z_3^+)=0 $ and $(u_n)_{x} <0$ in $(z_3,z_2)$.
	This combined the definition of $z_2$ implies that Lemma \ref{noccur} (ii) occurs with $a=z_3$ and $b=z_2$, which is a contradiction.

	For case (a$_2$), $(u_n)_x(z_2^-)=0$. Set $\mathcal M:=\{x\in (z_1,z_2):(u_n)_x(x)\ge0\}$. If $\mathcal M\subsetneqq (z_1,z_2)$, then there exists $z_4\in (z_1,z_2)$ such that $(u_n)_x(z_4)<0$. Noting that $(u_n)_x(z_1^+)=(u_n)_x(z_2^-)=0$, we have
	$$z_5:=\text{sup}\{x\in [z_1,z_4)|(u_{n})_{x}(x^+)=0\}\;\;\text{and}\;\;
	z_6:=\text{inf}\{x\in (z_4,z_2]|(u_{n})_{x}(x^-)=0\}$$
	are well-defined with $z_1\le z_5<z_4$ and $z_4<z_6\le z_2$. Thus,
	$(u_n)_{xx}(z^+_5)\le 0$, $(u_n)_{xx}(z_6^-)\ge 0$ and $(u_n)_{x} <0$ in $(z_5,z_6)$.
	This together with \eqref{ncu2} implies Lemma \ref{noccur} (ii) occurs with $a=z_5$ and $b=z_6$, which is a contradiction.
	
	If $\mathcal M= (z_1,z_2)$, we have $(u_n)_x\ge0$ in $(z_1,z_2)$.
	Then one of the following two subcases occur:
	\begin{equation*}
		\begin{aligned}
			&{\rm(b_1)}\; \text{there exists}\;z_*\in(z_1,z_2)\;\text{such that}\;\;d_n(u_n)_x(z_*)+\hat{d}_n(v_n)_x(z_*)\ge0;\\
			&{\rm(b_2)}\;d_n(u_n)_x+\hat{d}_n(v_n)_x<0\;\;\text{in}\;\;(z_1,z_2).
		\end{aligned}
	\end{equation*}
	For subcase ${\rm(b_1)}$, noticing that $(u_n)_x(z^-_2)=(v_{n})_x(z_2^-)=0$, we deduce from Lemma \ref{lemma3.2} (i) that
	\begin{equation}\label{intrk-1}
		\int_{z_*}^{z_2} r_n\left(k_n-u_n-v_n\right) {\rm d}x\ge 0.
	\end{equation}
	In addition, since $z_2>x_{n-1}$, we see from the definition of $z_2$ that
	\begin{equation}\label{ncu2-2}
		(v_n)_x<0\;\;\text{in}\;\;[z_*,z_2)\;\;\text{and}\;\;(v_n)_x(z_2^-)=0.
	\end{equation}
	Then it follows  from \eqref{3m} that
	\begin{equation*}
		\begin{split}
			\int_{z_*}^{z_2} r_n\left(k_n-u_n-v_n\right) {\rm d}x
			=&-\dfrac{\hat{d}_nk_n(v_{n})_{x}(z_2^-)}{v_n(z_2^-)}+\dfrac{\hat{d}_nk_n(v_{n})_{x}(z_*)}{v_2(z_*)} \\ &-\hat{d}_nk_n\int_{z_*}^{z_2}\dfrac{(v_{n})_{x}^2}{v_n^2}{\rm d}x<0,
		\end{split}
	\end{equation*}
	where we have used \eqref{ncu2-2} in the last step.
	This contradicts \eqref{intrk-1}.
	
	For subcase ${\rm(b_2)}$, we have
	\begin{equation}\label{z1z2}
		\hat{d}_n\left[(u_n)_x+(v_n)_x\right]\le d_n(u_n)_x+\hat{d}_n(v_n)_x<0 \;\;\text{in} \;\; (z_1,z_2),
	\end{equation}
	where we have used $\hat{d}_n\le d_n$ and $\mathcal M= (z_1,z_2)$ in first step. By the definition of $z_2$, $(v_{n})_{xx}(z_2^-)\ge0$, which together with \eqref{m4-b} yields $u_n(z_2^-)+v_n(z_2^-)\ge k_n$. Then it follows from \eqref{z1z2} that $u_n(x)+v_n(x)> k_n$ for $x\in (z_1,z_2)$, and consequently, we see from  \eqref{m4-a} that $(u_{n})_{xx}>0$ in $(z_1,z_2)$, which contradicts the fact that $(u_n)_x(z_1^+)=(u_n)_x(z_2^-)=0$.
	Therefore, we obtain a contradiction for each of the cases (a$_1$)-(a$_2$), and the claim $z_1=z_2$ holds.
	
	If $z_1=z_2=L$, then by the definitions of $z_1$ and $z_2$, $(u_{n})_{x}(x),(v_{n})_{x}(x)\le 0$ for $x\in\Omega_n$. Thus, Claim 1 holds for this case. If $z_1=z_2<L$, we have $$(u_n)_x(z_2^+)=(u_n)_x(L^-)=(v_n)_x(z^+_2)=(v_n)_x(L^-)=0.$$
	Then, by Lemma \ref{lemma3.2} (i) again,
	\begin{equation*}
		0\le \int_{z_2}^{L} r_n\left(k_n-u_n-v_n\right) {\rm d}x=-{d_2}k_n\int_{z_2}^{L}\dfrac{(u_{n})_{x}^2}{u_n^2}{\rm d}x=-\hat{d}_2k_n\int_{z_2}^{L}\dfrac{(v_{n})_{x}^2}{v_n^2}{\rm d}x\le 0,
	\end{equation*}
	which yields $(u_n)_x,(v_n)_x=0$ in $(z_2,L)$. This combined with the definitions of $z_1$ and $z_2$ implies that Claim 1 also holds for this case.
	
	\emph{Claim 2:}	We claim that model \eqref{m1} has no positive steady state for the case $n=2$.
	
	\noindent\emph{Proof of Claim:} Suppose to the contrary that model (\ref{m1}) has a positive steady state $(\mathbf u,\mathbf v)$ for the case $n=2$.
	It follows from Lemmas \ref{lemma3.1} and \ref{add3.6} and Claim 1 that
	\begin{equation}\label{uvle0}
		(u_1)_x(x_1^-),(v_1)_x(x_1^-)<0\;\;\text{and}\;\;(u_i)_x,(v_i)_x\le0\;\;\text{in}\;\;\Omega_i\;\;\text{for each}\;\;i=1,2.
	\end{equation}
	Plugging  $a=0$ and $b=L$ into \eqref{3m5} yields
	\begin{equation}\label{0z2-main}
		\begin{aligned}
			0=&d_{1}(\hat{p}_1-p_1)(u_{1})_{x}(x_1^-)v_1(x_1^-) +\sum_{i=1}^{2}(\prod_{\ell=i}^{2}p_\ell)(d_i-\hat{d}_i)\int_{\Omega_i}(v_{i})_{x}(u_{i})_{x}{\rm d}x>0,
		\end{aligned}
	\end{equation}
	where we have used \eqref{uvle0}, $ \mathbf p\gg\mathbf{\hat p}$ and $\mathbf d\ge\mathbf{\hat d}$ in the last step. This leads to a contradiction and completes the proof for Claim 2.
	
	\emph{Claim 3:}	We claim that model \eqref{m1} has no positive steady state for the case $n=m$ with $m\ge3$ if it has no positive steady state for $n=2,\cdots,m-1$.
	
	\noindent\emph{Proof of Claim:} Suppose to the contrary that model (\ref{m1}) has a positive steady state $(\mathbf u,\mathbf v)$ for the case $n=m$. To obtain a contradiction, we first show that the following two cases cannot occur:
	\begin{enumerate}
		\item [(c$_1$)] there exists $2\le\ell_0\le m-2$ such that $(u_{\ell_0})_{x}(x_{\ell_0}^-)(v_{\ell_0})_{x}(x_{\ell_0}^-)\le 0$ if $m\ge4$;
		\item [(c$_2$)] there exists $2\le\ell_0\le m-1$ and $x^*\in\Omega_{\ell_0}$ such that $(u_{\ell_0})_{x}(x^*)(v_{\ell_0})_{x}(x^*)\le0$.
	\end{enumerate}
	Suppose to the contrary that (c$_1$) occurs. Without loss of generality, we assume that
	\begin{equation}\label{uvell0}
		(u_{\ell_0})_{x}(x_{\ell_0}^-)\le 0\;\;\text{and}\;\; (v_{\ell_0})_{x}(x_{\ell_0}^-)\ge 0.
	\end{equation}
	Construct the following auxiliary system:
	\begin{equation}\label{uvt}
		\begin{cases}
			(w_i)_t=d_i (w_{i})_{xx}+r_i w_i\left(1-\ds\frac{w_i+\xi_i}{k_i}\right), \;\;t>0,\;\; x\in \Omega_{i},\;\; i=1,\cdots,\ell_0, \\
			(\xi_i)_t=\hat{d}_i (\xi_{i})_{xx}+r_i \xi_i\left(1-\ds\frac{w_i+\xi_i}{k_i}\right), \;\;t>0,\;\; x\in \Omega_{i},\;\; i=1,\cdots,\ell_0, \\
			w_{i+1}(x_i^{+},t)=p_i w_i(x_i^{-},t),\;d_{i+1}( w_{i+1})_{x}(x_i^{+},t)=d_i ( w_{i})_{x}(x_i^{-},t),\;\;t>0,\;\;\\
			\xi_{i+1}(x_i^{+},t)=\hat p_i\xi_i(x_i^{-},t),\;\hat d_{i+1}( \xi_{i+1})_{x}(x_i^{+},t)=\hat d_i ( \xi_{i})_{x}(x_i^{-},t),\;\;t>0,\\
			(w_{1})_{x}(0^{+},t)=(w_{\ell_0})_{x}(x_{\ell_0}^{-},t)=
			(\xi_{1})_{x}(0^{+},t)=(\xi_{\ell_0})_{x}(x_{\ell_0}^{-},t)=0,\;\;t>0.
		\end{cases}
	\end{equation}
	Then system \eqref{uvt} generates a monotone dynamical system, which is order-preserving with respect to the order ``$\succeq$" defined in \eqref{order} (see the Appendix \ref{appA}).
	Define $$([\mathbf u]_{\ell_0},[\mathbf v]_{\ell_0})=(u_{1}(x),\cdots,u_{\ell_0}(x),v_{1}(x),\cdots,v_{\ell_0}(x)).$$
	It follows from \eqref{uvell0} and Proposition \ref{supersub} that
	$([\mathbf u]_{\ell_0},[\mathbf v]_{\ell_0})$ is  a sub-equilibrium of \eqref{uvt}.
	
	Let $(\mathbf w^*,\mathbf 0)$ be the semi-trivial steady state of \eqref{uvt}, where
	$$\mathbf w^*=( w^*_1(x),\cdots, w^*_{\ell_0}(x))$$
	with $w^*_i(x)>0$ for $x\in\overline\Omega_i$ and $i=1,\cdots,\ell_0$.
	It follows from Lemma \ref{lemma2.5} (i) that $\tilde \la_1>0$, where $\tilde \la_1$ is the  principal eigenvalue of the following eigenvalue problem:
	\begin{equation*}
		\begin{cases}
			\hat{d}_i (\psi_{i})_{xx}+r_i \psi_i\left(1-\dfrac{w^*_i}{k_i}\right)=\tilde\lambda_1 \psi_i,\;\; x\in {\Omega}_{i},\;\; i=1,\cdots,\ell_0,\\
			\psi_{i+1}(x_i^{+})=\hat{p}_i\psi_i(x_i^{-}),\;\;\hat{d}_{i+1}(\psi_{i+1})_{x}(x_i^{+})=\hat d_i (\psi_{i})_{x}(x_i^{-}),\\
			(\psi_{1})_{x}(0^{+})=(\psi_{\ell_0})_{x}(x_{\ell_0}^-)=0.
		\end{cases}
	\end{equation*}
	Let $\mathbf{\psi}=(\psi_{1}(x),\cdots,\psi_{\ell_0}(x))$ be the eigenfunction corresponding to $\tilde \la_1$ with $\psi_i(x)>0$ for $x\in\overline\Omega_i$ and $i=1,\cdots,\ell_0$. Choose $\epsilon_1,\epsilon_2>0$ so that
	\begin{equation}\label{epsilon-n}
		\min_{x\in\overline\Omega_i}v_i(x)>\epsilon_2\max_{x\in\overline\Omega_i}\psi_i(x)\;\;\text{and}\;\; \tilde\lambda_1>\dfrac{r_i}{k_i}\left(\epsilon_1\max_{x\in\overline\Omega_i}w_i^*+\epsilon_2\max_{x\in\overline\Omega_i}\psi_i\right)\;\; \text{for}\;\;i=1,\cdots,\ell_0.
	\end{equation}
	A direct computation yields, for $i=1,\cdots,\ell_0$,
	\begin{equation*}
		\begin{split}
			&(1+\epsilon_1)d_i (w^*_{i})_{xx}+(1+\epsilon_1)r_i w^*_i\left(1-\dfrac{w^*_i+\epsilon_1 w_i^*+\epsilon_2\psi_i}{k_i}\right)\\
			=&-\dfrac{(1+\epsilon_1)r_iw_i^*(\epsilon_1w_i^*+\epsilon_2\psi_i)}{k_i}<0,
		\end{split}
	\end{equation*}
	and
	\begin{equation*}
		\epsilon_2\hat{d}_i (\psi_{i})_{xx}+\epsilon_2r_i \psi_i\left(1-\dfrac{(1+\epsilon_1)w^*_i+\epsilon_2\psi_i}{k_i}\right)=\epsilon_2\psi_i\left(\tilde\lambda_1-\dfrac{ r_i(\epsilon_1w_i^*+\epsilon_2\psi_i)}{k_i}\right)>0,
	\end{equation*}
	which implies that $((1+\epsilon_1)\mathbf{w^*}, \epsilon_2\mathbf{\psi})$ is a super-equilibrium of \eqref{uvt}.
	
	By the first inequality of \eqref{uvell0},  $[\mathbf u]_{\ell_0}=(u_1,\cdots,u_{\ell_0})$ is a lower solution of
	\begin{equation}\label{1wphi-n}
		\begin{cases}
			\ds d_i (w_{i})_{xx}+r_i w_i\left(1-\dfrac{w_i}{k_i}\right)=0, \;\;x\in \Omega_{i},\;\; i=1,\cdots,\ell_0, \\
			w_{i+1}(x_i^{+})=p_i w_i(x_i^{-}),\;\;d_{i+1}(w_{i+1})_x(x_i^{+})=d_i(w_i)_x(x_i^{-}),\\
			(w_{1})_{x}(0^{+})=(w_{\ell_0})_{x}(x_{\ell_0}^-)=0.
		\end{cases}
	\end{equation}
	This implies $u_i\le w_i^*$ in $\overline \Omega_i$, and consequently, $u_i< (1+\epsilon_1)w_i^*$ in $\overline \Omega_i$ for $i=1,\cdots,\ell_0$. This together with \eqref{epsilon-n} yields $((1+\epsilon_1)\mathbf{w^*}, \epsilon_2\mathbf{\psi})\succeq(\neq) ([\mathbf u]_{\ell_0},[\mathbf v]_{\ell_0})$.	Then, it follows  from Theorem \ref{pss} that the auxiliary system \eqref{uvt} admits a positive steady state, which contradicts the fact that
	model \eqref{m1} has no positive steady state for $n=2,\cdots,m-1$. Thus,
	(c$_1$) cannot occur, and case (c$_2$) can be handled similarly.
	
	Since  (c$_1$) and (c$_2$) cannot occur, it follows that
	one of the following two cases can occur:
	\begin{enumerate}
		\item [(d$_1$)] $(u_{i})_{x},(v_{i})_{x}> 0$ in $\Omega_i$ for each $2\le i\le m-1$ and $(u_{i})_{x}(x_{i}^-),(v_{i})_{x}(x_i^-)> 0$ for each $i=2,\cdots,m-2$;
		\item [(d$_2$)] $(u_{i})_{x},(v_{i})_{x}< 0$ in $\Omega_i$ for each $2\le i\le m-1$ and $(u_{i})_{x}(x_{i}^-),(v_{i})_{x}(x_i^-)< 0$ for each $i=2,\cdots,m-2$.
	\end{enumerate}
	Now we obtain a contradiction for each of cases (d$_1$) and  (d$_2$).	
	
	For case (d$_1$), we see from \eqref{m4-e} that $y_*,y^*,z_*,z^*$ are well-defined with $0\le y_*,z_*\le x_1\le x_{m-1}\le y^*,z^*\le L$, where
	\begin{equation*}
		\begin{aligned}
			&y_*=\sup \{y:\exists\;1\le \ell\le 2 \;\text{s.t.}\;x\in[x_{\ell-1},x_{\ell})\;\text{and}\; (u_{\ell})_{x}(y^+)=0\},\\
			&y^*=\inf  \{y:\exists\;  m-1\le \ell\le m \;\text{s.t.}\;x\in(x_{\ell-1},x_{\ell}]\;\text{and}\; (u_{\ell})_{x}(y^-)=0\},\\
			&z_*=\sup \{z:\exists \;1\le \ell\le 2 \;\text{s.t.}\;x\in[x_{\ell-1},x_{\ell})\;\text{and}\; (v_{\ell})_{x}(z^+)=0\},\\
			&z^*=\inf \{z:\exists \; m-1\le \ell\le m \;\text{s.t.}\;x\in(x_{\ell-1},x_{\ell}]\;\text{and}\; (v_{\ell})_{x}(z^-)=0\}.\\
		\end{aligned}
	\end{equation*}
	Let
	$\underline x=\max\{y_*,z_*\}$ and $\overline x=\min\{y^*,z^*\}$,
	and there exist $\underline\ell$ and $\overline \ell$ with $1\le \underline\ell\le 2$ and $m-1\le \overline\ell\le m$ such that $\underline x\in[x_{\underline \ell-1}, x_{\underline \ell})$ and $\overline x\in(x_{\overline \ell-1}, x_{\overline \ell}]$. Then,
	by \eqref{m4-a}-\eqref{m4-b},
	$u_{\underline\ell}(\underline x^+)+v_{\underline\ell}(\underline x^+)\ge k_{\underline\ell}$ and $u_{\overline \ell }({\overline x}^-)+v_{\overline \ell }({\overline x}^-)\le k_{\overline \ell}$. This combined with  ${\rm(d_1)}$ and $\mathbf p\gg\mathbf {\hat p}\gg\mathbf{\overline{k}}$ implies that
	\begin{equation*}
		\begin{aligned}
			k_{\overline \ell }&\ge u_{\overline \ell }({\overline x }^-)+v_{\overline \ell }({\overline x }^-)>u_{\overline \ell }(x_{\overline \ell-1}^+)+v_{\overline \ell}(x_{\overline \ell-1}^+)>\hat{p}_{\overline \ell-1}\left(u_{\overline \ell-1}(x_{\overline \ell-1}^-)+v_{\overline \ell-1}(x_{\overline \ell-1}^-)\right)\\
			&>\cdots >(\prod_{\underline \ell}^{\overline\ell-1}\hat{p}_i)\left(u_{\underline \ell}(x_{\underline\ell}^-)+v_{\underline \ell}(x_{\underline \ell}^-)\right)>(\prod_{\underline \ell}^{\overline\ell-1}\hat{p}_i)\left(u_{\underline \ell}(\underline x^+)+v_{\underline \ell}(\underline x^+)\right)>k_{\overline \ell},
		\end{aligned}
	\end{equation*}
	which is a contradiction.
	
	For case (d$_2$), we see from Lemma \ref{add3.6} and Claim 1 that
	$(u_{i})_{x},(v_{i})_{x}\le0$ in $\Omega_i$ for each $1\le i\le m$ and $(u_{i})_{x}(x_{i}^-),(v_{i})_{x}(x_i^-)\le 0$ for each $i=1,\cdots,m-1$. Then plugging  $a=0$, $b=L$, $\ell_*=1$ and $\ell^*=m$ into \eqref{3m5} yields
	\begin{equation}\label{0z2-main2}
		\begin{split}
			0=&\sum_{i=1}^{m-1}(\prod_{\ell=i+1}^{m}p_\ell)d_{i}(\hat{p}_i-p_i)(u_{i})_{x}(x_i^-)v_i(x_i^-)\\
			&+\sum_{i=1}^{m}(\prod_{\ell=i}^{m}p_\ell)(d_i-\hat{d}_i)\int_{\Omega_i}(v_{i})_{x}(u_{i})_{x}{\rm d}x\ge0,
		\end{split}
	\end{equation}
	where we have used $\mathbf p\gg\mathbf{\hat p}$ and $\mathbf d\ge\mathbf{\hat d}$ in the last step. Exchanging the positions of $u_i$ and $v_i$ in \eqref{0z2-main2}, we can also obtain that
	\begin{equation}\label{0z2-main3}
		\begin{split} 0=&\sum_{i=1}^{m-1}(\prod_{\ell=i+1}^{m}\hat p_\ell)\hat d_{i}(p_i-\hat{p}_i)(v_{i})_{x}(x_i^-)u_i(x_i^-)\\
			&+\sum_{i=1}^{m}(\prod_{\ell=i}^{m}\hat{ p}_\ell)(\hat{d}_i- d_i)\int_{\Omega_i}(v_{i})_{x}(u_{i})_{x}{\rm d}x\le0,
		\end{split}
	\end{equation}
	where we have used $\mathbf p\gg\mathbf{\hat p}$ and $\mathbf d\ge\mathbf{\hat d}$ in the last step.
	Then it follows from \eqref{0z2-main2} and \eqref{0z2-main3} that
	$(u_{i})_{x}(x_i^-)=(v_{i})_{x}(x_i^-)=0$ for each $i=1,\cdots,m-1$, which implies that $u_i+v_i=k_i$ in $\Omega_i$ for each $i=1,\cdots,m$.
	Then
	\begin{equation}
		\begin{split}
			k_m=&u_m(x_{m-1}^+)+v_m(x_{m-1}^+)>\hat{p}_{m-1}[u_{m-1}(x_{m-1}^-)+ v_{m-1}(x_{m-1}^-)]\\
			>&\cdots>(\prod_{\ell=1}^{m-1}\hat{p}_\ell)[u_{1}(x_{1}^-)+ v_{1}(x_{1}^-)]>k_m,
		\end{split}
	\end{equation}
	where we have used \eqref{m4-d} and $\mathbf p\gg\mathbf{\hat p}$ in the second step and $\mathbf{\hat p}\gg\mathbf{\overline k}$ in the last step. This also leads to a contradiction and completes the proof for Claim 3.
	
	Then the desired result follows from Claims 2-3 and induction.
\end{proof}

\subsection{Main result}\label{ss2}
Now we present our main result on the global dynamics of model \eqref{m1}.
\begin{theorem}\label{theorem3.8}
	Let $\mathbf{\overline k}$ be defined in \eqref{ok}. Then the following statements hold for model \eqref{m1}:
	\begin{enumerate}
		\item [{$\rm (i)$}] If $\mathbf p\gg \mathbf{\overline{k}}$, then the semi-trivial steady state $(\mathbf{0},\mathbf{v^*})$ is globally asymptotically stable for $(\mathbf {\hat p},\mathbf {\hat d})\in \mathcal L_1^*$ and $(\mathbf{u^*},\mathbf{0})$ is globally asymptotically stable for $(\mathbf {\hat p},\mathbf {\hat d})\in \mathcal L_2$, where $\mathcal L_1^*$ and
		$\mathcal L_2$ are defined in \eqref{l1l2*} and \eqref{l1l2}, respectively.
		Moreover, model \eqref{m1} admits at least one stable positive steady state for $(\mathbf {\hat p},\mathbf {\hat d})\in \mathcal L_3$, where $\mathcal L_3$ is defined in \eqref{l3}.
		\item [{$\rm (ii)$}] If $\mathbf{\overline{k}}\gg\mathbf p$, then the semi-trivial steady state $(\mathbf{0},\mathbf{v^*})$ is globally asymptotically stable for $(\mathbf {\hat p},\mathbf {\hat d})\in \mathcal S_1^*$ and $(\mathbf{u^*},\mathbf{0})$ is globally asymptotically stable for $(\mathbf {\hat p},\mathbf {\hat d})\in \mathcal S_2$, where $S_1^*$ and $S_2$ are defined in \eqref{s1s2*} and \eqref{s1s2}, respectively.
		Moreover, model \eqref{m1} admits at least one stable positive steady state for $(\mathbf {\hat p},\mathbf {\hat d})\in \mathcal S_3$, where $\mathcal S_3$ is defined in \eqref{s3}.
	\end{enumerate}
\end{theorem}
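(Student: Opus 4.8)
The plan is to treat \eqref{m1} as a two-species competition system and apply the theory of monotone dynamical systems. With respect to the order ``$\succeq$'' of Appendix~\ref{appA} — the usual order on the $\mathbf u$-components and the reversed order on the $\mathbf v$-components — the model generates a strongly order-preserving semiflow which, by the logistic structure of the reaction terms, is point dissipative and eventually compact, hence possesses a compact global attractor. For such a competitive system the asymptotics are governed by the two semi-trivial steady states $(\mathbf u^*,\mathbf 0)$, $(\mathbf 0,\mathbf v^*)$ and by the (non)existence of a positive steady state: if no positive steady state exists, exactly one semi-trivial steady state is globally asymptotically stable — namely the linearly stable one, the other being a uniform weak repeller — whereas if \emph{both} semi-trivial steady states are unstable there is at least one stable positive steady state (the content of Theorem~\ref{pss} and the companion results in Appendix~\ref{appA}). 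So it suffices to (a) pin down the stability type of $(\mathbf u^*,\mathbf 0)$ and $(\mathbf 0,\mathbf v^*)$ on each of $\mathcal L_1^*,\mathcal L_2,\mathcal L_3$ (resp.\ $\mathcal S_1^*,\mathcal S_2,\mathcal S_3$) and (b) invoke Theorem~\ref{theorem3.6} for the nonexistence of a positive steady state on $\mathcal L_1^*\cup\mathcal L_2$ (resp.\ $\mathcal S_1^*\cup\mathcal S_2$).

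The stability of $(\mathbf u^*,\mathbf 0)$ is already furnished by Theorem~\ref{theorem2.8}. For $(\mathbf 0,\mathbf v^*)$ I would exploit the symmetry of the nonlinearities in \eqref{m1} under the interchange $\mathbf u\leftrightarrow\mathbf v$, $(\mathbf p,\mathbf d)\leftrightarrow(\hat{\mathbf p},\hat{\mathbf d})$: then $\mathbf v^*$ solves the analogue of \eqref{m2}, the stability of $(\mathbf 0,\mathbf v^*)$ is governed by $\lambda_1(\mathbf p,\mathbf d,\mathbf 1-\mathbf v^*/\mathbf k)$, and Theorem~\ref{theorem2.8} (together with Lemmas~\ref{lemma2.5} and \ref{lemma2.5-s} behind it) applies verbatim after this relabelling, with $\hat{\mathbf p}$ now playing the role of the resident strategy and $\mathbf p$ that of the mutant. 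Carrying this out in case (i), where $\mathbf p\gg\overline{\mathbf k}$: on $\mathcal L_1^*$ one has $\mathbf p\gg\hat{\mathbf p}\gg\overline{\mathbf k}$ and $\mathbf d\ge\hat{\mathbf d}$, so the relabelled configuration falls into the stable case of Theorem~\ref{theorem2.8}(i) (the analogue of $(\hat{\mathbf p},\hat{\mathbf d})\in\mathcal L_2$) and $(\mathbf 0,\mathbf v^*)$ is stable; on $\mathcal L_2$ one has $\hat{\mathbf p}\gg\mathbf p\gg\overline{\mathbf k}$ and $\hat{\mathbf d}\ge\mathbf d$, the analogue of $\mathcal L_1$, so $(\mathbf 0,\mathbf v^*)$ is unstable; on $\mathcal L_3$ one has $\overline{\mathbf k}\gg\hat{\mathbf p}$ while $\mathbf p\gg\overline{\mathbf k}$, which is the analogue of $\mathcal S_3$ in Theorem~\ref{theorem2.8}(ii), so $(\mathbf 0,\mathbf v^*)$ is unstable with no condition on $\mathbf d$ — the relabelled Lemma~\ref{lemma2.5-s} being the key input here. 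Case (ii) is treated identically with $\mathcal S_1^*,\mathcal S_2,\mathcal S_3$ in place of $\mathcal L_1^*,\mathcal L_2,\mathcal L_3$.

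Assembling the pieces for case (i): on $\mathcal L_1^*$, $(\mathbf u^*,\mathbf 0)$ is unstable (Theorem~\ref{theorem2.8}(i), as $\mathcal L_1^*\subset\mathcal L_1$), $(\mathbf 0,\mathbf v^*)$ is stable, and Theorem~\ref{theorem3.6}(i) rules out a positive steady state, so the competitive dichotomy forces $(\mathbf 0,\mathbf v^*)$ to be globally asymptotically stable; on $\mathcal L_2$ the roles are exchanged, yielding global asymptotic stability of $(\mathbf u^*,\mathbf 0)$; on $\mathcal L_3$ both semi-trivial steady states are unstable, so Theorem~\ref{pss} (and its refinement in Appendix~\ref{appA}) gives at least one stable positive steady state. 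Case (ii) is the same. The main obstacle is not any single estimate but the rigorous setup of the abstract framework: one must verify that the semiflow generated by \eqref{m1}, with the interface conditions \eqref{m1-3}--\eqref{m1-4} built into the domain of the generator, is strongly order-preserving (eventual strong monotonicity with respect to ``$\succeq$'') and eventually compact on the appropriate state space, so that the dichotomy/trichotomy for two-species competition is legitimately available; this is precisely what the results of Appendix~\ref{appA} are meant to supply. A secondary point requiring care is to confirm that the symmetric reuse of Theorem~\ref{theorem2.8} for $(\mathbf 0,\mathbf v^*)$ on $\mathcal L_3$ is genuinely uniform in $\hat{\mathbf d}$, i.e.\ that Lemma~\ref{lemma2.5-s}, rather than merely Lemma~\ref{lemma2.5}, supplies the required instability.
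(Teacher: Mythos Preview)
Your proposal is correct and follows essentially the same route as the paper: combine the local stability information from Theorem~\ref{theorem2.8} (plus its symmetric counterpart for $(\mathbf 0,\mathbf v^*)$) with the nonexistence result of Theorem~\ref{theorem3.6}, and conclude via the standard monotone-dynamical-systems trichotomy for competitive systems. The only cosmetic discrepancy is that the abstract framework you attribute to Appendix~\ref{appA} (strong order-preservation and compactness of the semiflow) is actually developed in Appendix~\ref{AppendixC} of the paper (Theorem~\ref{C4}), while Appendix~\ref{appA} handles the single-species steady state; otherwise your argument matches the paper's proof step for step.
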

\begin{proof}
We only prove (i), since (ii) can be treated similarly.
 We can rewrite model \eqref{m1} as the abstract evolution equation \eqref{abstract} in Appendix \ref{AppendixC}.
Theorem \ref{C4} implies that the semiflow $T_t$ generated by \eqref{abstract} is strongly order-preserving with respect to $\ge_k$ (defined in \eqref{kine}) and order compact on the fractional power spaces $X^{\alpha}_u \times X^{\alpha}_v$ (defined in Remark \ref{rcompact}) for all $t > 0$. Consequently, the theory of monotone dynamical systems \cite{41-Hess-1991,46-Hsu-1996,47-Lam-2016,48-Smith-1995} yields the global dynamics of \eqref{m1}.
Specifically,
Theorem \ref{theorem3.6} implies that model \eqref{m1} has no positive steady state for  $(\mathbf {\hat p},\mathbf {\hat d})\in \mathcal L_1^*\cup\mathcal L_2$. Moreover, Theorem \ref{theorem2.8} shows that $(\mathbf{u^*},\mathbf 0)$ is unstable (resp. stable) for  $(\mathbf {\hat p},\mathbf {\hat d})\in \mathcal L_1^*$ (resp. $(\mathbf {\hat p},\mathbf {\hat d})\in \mathcal L_2$). Since the nonlinear terms of model \eqref{m1} are symmetric, we can also obtain that $(\mathbf 0,\mathbf{v^*})$ is stable (resp. unstable) for  $(\mathbf {\hat p},\mathbf {\hat d})\in \mathcal L_1^*$ (resp.  $(\mathbf {\hat p},\mathbf {\hat d})\in \mathcal L_2$). Then, by the monotone dynamical system theory \cite{41-Hess-1991,46-Hsu-1996,47-Lam-2016,48-Smith-1995}, we obtain $(\mathbf 0, \mathbf{v^*})$ (resp. $(\mathbf{u^*},\mathbf 0)$) is globally asymptotically stable for $(\mathbf {\hat p},\mathbf {\hat d})\in \mathcal L_1^*$ (resp. $(\mathbf {\hat p},\mathbf {\hat d})\in \mathcal L_2$). By Theorem \ref{theorem2.8}, we see that $(\mathbf{u^*},\mathbf 0)$ is unstable for $(\mathbf {\hat p}, \mathbf {\hat d})\in \mathcal L_3$. Note that the nonlinear terms of model \eqref{m1} are symmetric. We can also obtain that $(\mathbf{0},\mathbf {v^*})$ is unstable. Then, by the monotone dynamical system theory again, \eqref{m1} admits at least one stable positive steady state.
\end{proof}

Finally, we summarize the global dynamics of model \eqref{m1} in the following two tables: (see Tables \ref{tablepk-2} and \ref{tablekp-2}).
\begin{table}[htbt]
	\centering
	\caption{Summary of the global dynamics for $\mathbf p\gg \mathbf{\overline{k}}$. }
	\label{tablepk-2}
	\vspace{5pt}
	\begin{tabular}{ccc}
		\toprule[1pt]
		Parameter condition for $\hat{\mathbf p}$ & Parameter condition for $\hat{\mathbf d}$ & Results\\
		\midrule[1pt]
		$\hat{\mathbf p}\gg\mathbf p$ &$\hat{\mathbf d}\ge\mathbf d$ &$(\mathbf{u^*},\mathbf{0})$ g.a.s.\\
		$\mathbf p\gg\hat{\mathbf p}\gg\mathbf{ \overline{k}}$   &$\mathbf d\ge \hat{\mathbf d}$ &$(\mathbf{0},\mathbf{v^*})$ g.a.s. \\
		$\mathbf{ \overline{k}}\gg\mathbf{\hat{p}}$  &none &coexistence\\
		\bottomrule[1pt]
	\end{tabular}
\end{table}
\begin{table}[htbt]
	\centering
	\caption{Summary of the global dynamics for $\mathbf{\overline{k}}\gg\mathbf p$.}
	\label{tablekp-2}
	\vspace{5pt}
	\begin{tabular}{ccc}
		\toprule[1pt]
		Parameter condition for $\hat{\mathbf p}$ & Parameter condition for $\hat{\mathbf d}$ & Results\\
		\midrule[1pt]
		$\mathbf{ \overline{k}}\gg\hat{\mathbf p}\gg\mathbf p$ &$\mathbf d \ge \hat{\mathbf d}$ &$(\mathbf{0},\mathbf{v^*})$ g.a.s.\\
		$\mathbf p\gg\hat{\mathbf p}$   &$\hat{\mathbf d}\ge\mathbf d$ &$(\mathbf{u^*},\mathbf{0})$ g.a.s. \\
		$\mathbf{\hat{p}}\gg\mathbf{ \overline{k}}$  &none &coexistence\\
		\bottomrule[1pt]
	\end{tabular}
\end{table}
\section{Discussion}

We study a reaction-diffusion model for two-species competition in a fragmented habitat of $n$ adjacent patches.  The model incorporates habitat preference of individuals, which induces density discontinuities at interfaces. This model differs from established models, such as coupled ordinary differential equations (patch models) and classical reaction-diffusion models, which typically do not account for such discontinuities.

A central question in the study of two-species competition concerns the mechanisms that drive competitive exclusion or enable coexistence. Within a two-patch reaction-diffusion framework incorporating edge behavior, \cite{36-Maciel-2020} demonstrated that when habitat preference is viewed as a strategy, the ideal free distribution (IFD) strategy is a global evolutionarily stable strategy (ESS).  Our results extend this by proving that the IFD strategy is a global convergent stable strategy (CSS). Specifically, we show that a species with a strategy closer to the IFD can invade and outcompete the resident whose strategy is farther away, provided both strategies lie on the same side of the IFD. Conversely, two species can coexist if their strategies lie on opposite sides of the IFD. Moreover, this result is extended to an $n$-patch reaction-diffusion model incorporating edge behavior.

 These findings  are consistent with results from both coupled ordinary differential equations (patch models) and classical reaction-diffusion models, in which dispersal is typically treated as a strategy. Indeed, as demonstrated by \cite{CantrellIFD,LouIFD} and \cite{AverillIFD,57-Cantrell-2010}, the IFD strategy is also a global ESS in the frameworks of patch models and classical reaction-diffusion models, respectively. Furthermore, competitive outcomes (exclusion or coexistence) are also determined by whether species' strategies reside on the same or opposite sides of the IFD, see \cite{57-Cantrell-2010,Liusapm}.
 Therefore, across all three classes of models, the ideal free distribution (IFD) emerges as a universal principle that governs whether competition results in exclusion or coexistence.

Our analysis relies crucially on the monotonicity of the species steady state for the corresponding single-species model \eqref{m2}. According to Lemma \ref{lemma2.1-n}, this monotonicity in solutions to system \eqref{m2} is guaranteed if either $\mathbf{p} \gg \overline{\mathbf{k}}$ or $\overline{\mathbf{k}} \gg \mathbf{p}$ holds. Biologically, this condition requires that the discontinuities in population density at all interfaces are either uniformly larger or uniformly smaller than the corresponding discontinuities in the carrying capacity \cite{33-Zaker-BMB-2019}. A violation of this condition may lead to non-monotonic steady states, as demonstrated in Proposition \ref{example}. Nevertheless, the non-monotonicity of steady states poses analytical challenges that remain to be explored in future work.

\vspace{10pt}
\begin{appendices}
\titleformat{\section}
{\normalfont\Large\bfseries}{Appendix \thesection}{1em}{}
\section{Semi-trivial steady state}\label{appA}
In this part,  we consider the positive solutions of the following system:
\begin{equation}\label{apm2}
	\begin{cases}
		\ds d_i (u_{i})_{xx}+r_i u_i\left(1-\dfrac{u_i}{k_i}\right)=0, \;\;x\in \Omega_i,\;\; i=1,2,\cdots,n, \\
		u_{i+1}(x_i^{+})=p_iu_i(x_i^{-}),\;\;d_{i+1}(u_{i+1})_{x}(x_i^{+})=d_i (u_{i})_{x}(x_i^{-}),\;\;i=1,\cdots,n-1,\\
		(u_{1})_{x}(0^{+})=(u_{n})_{x}(L^{-})=0,
	\end{cases}
\end{equation}
where $d_i,r_i,k_i$ are positive for each $i=1,\cdots,n$ and have the same meanings as model \eqref{m1}.
Here the solution $\mathbf u=(u_1,\cdots,u_n)$ of \eqref{apm2} is positive means that $\mathbf u\gg\mathbf0$ (i.e., $u_i>0$ in $\overline\Omega_i$ for each $i=1,\cdots,n$).
\begin{theorem}\label{apt1}
Model \eqref{apm2} admits a unique positive solution $\mathbf u^*\in C^2(\overline{\Omega}_1)\times \cdots \times C^2(\overline{\Omega}_n)$.
\end{theorem}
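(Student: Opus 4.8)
The plan is to obtain existence by the sub--supersolution method together with monotone iteration, and uniqueness by a weighted Díaz--Sa\'a / Brezis--Oswald type identity. Throughout, the device that makes the discontinuous interface conditions manageable is the weight $\pi_i:=\prod_{\ell=i}^{n}p_\ell$ (with $p_n:=1$): testing the $i$-th equation against $\pi_i$ times an admissible function and summing over $i$ forces all boundary contributions at the interior interfaces $x=x_i$ to telescope and cancel, exactly as in the identities \eqref{2l11}--\eqref{id} already used in the paper. Indeed, since $d_{i+1}(u_{i+1})_x(x_i^+)=d_i(u_i)_x(x_i^-)$, a test function $\zeta$ with $\zeta_{i+1}(x_i^+)=p_i\zeta_i(x_i^-)$, and $\pi_{i+1}p_i=\pi_i$, the contributions of patches $i$ and $i+1$ at $x=x_i$ are equal and opposite; the contributions at $x=0,L$ vanish because the fluxes do.

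\emph{Existence.} First I would exhibit a piecewise-constant supersolution $\overline{\mathbf u}=(c_1,\dots,c_n)$ of \eqref{apm2}: pick $c_1\ge k_1$ and $c_{i+1}:=\max\{k_{i+1},\,p_ic_i\}$, so that $c_i\ge k_i$ gives $r_ic_i(1-c_i/k_i)\le 0$ in each patch, while $c_{i+1}\ge p_ic_i$ and the vanishing of all fluxes supply the interface and boundary inequalities. For a subsolution, let $\lambda_1^0$ and $\Phi=(\phi_1,\dots,\phi_n)$, $\phi_i>0$, be the principal eigenpair of the linear interface problem obtained from \eqref{apm2} by deleting the quadratic term (existence as in Proposition \ref{prin-exist}); multiplying the eigenvalue equation by $\pi_i\phi_i$, integrating and summing yields $\lambda_1^0\sum_i\pi_i\int_{\Omega_i}\phi_i^2\,{\rm d}x=\sum_i\pi_i\int_{\Omega_i}r_i\phi_i^2\,{\rm d}x>0$, hence $\lambda_1^0>0$. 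Then for $\varepsilon>0$ small, $d_i(\varepsilon\phi_i)_{xx}+r_i(\varepsilon\phi_i)(1-\varepsilon\phi_i/k_i)=\varepsilon\phi_i(\lambda_1^0-r_i\varepsilon\phi_i/k_i)>0$, and $\varepsilon\Phi$ inherits the interface and boundary relations with equality, so $\underline{\mathbf u}:=\varepsilon\Phi$ is a subsolution; shrinking $\varepsilon$ further gives $\underline{\mathbf u}\le\overline{\mathbf u}$. A standard monotone iteration in the weighted Hilbert space with inner product $\langle\mathbf u,\mathbf z\rangle=\sum_i\pi_i\int_{\Omega_i}u_iz_i\,{\rm d}x$ — using that the shifted linear interface operator $\mathbf u\mapsto(d_i(u_i)_{xx}-Ku_i)_i$ is, for $K$ large, coercive and inverse-positive (weak maximum principle), together with patchwise elliptic regularity and the matching conditions — produces a solution $\mathbf u^*$ with $\underline{\mathbf u}\le\mathbf u^*\le\overline{\mathbf u}$ lying in $C^2(\overline\Omega_1)\times\cdots\times C^2(\overline\Omega_n)$; in particular $\mathbf u^*\gg\mathbf 0$. (Equivalently, the scalar monotone semiflow generated by \eqref{m1} with the second species absent drives $\overline{\mathbf u}$ down to a steady state $\ge\underline{\mathbf u}$.)

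\emph{Uniqueness.} Let $\mathbf u$ and $\mathbf w$ be two positive solutions. Since each $u_i,w_i$ is continuous and strictly positive on the compact $\overline\Omega_i$, the functions $\zeta_i:=(u_i^2-w_i^2)/u_i$ and $\eta_i:=(w_i^2-u_i^2)/w_i$ lie in $C^2(\overline\Omega_i)$, and the interface relations of $\mathbf u,\mathbf w$ give $\zeta_{i+1}(x_i^+)=p_i\zeta_i(x_i^-)$, $\eta_{i+1}(x_i^+)=p_i\eta_i(x_i^-)$, so both are admissible test functions. I would multiply \eqref{apm2} for $u_i$ by $\pi_i\zeta_i$ and for $w_i$ by $\pi_i\eta_i$, integrate over $\Omega_i$, sum over $i$, integrate by parts (all interface boundary terms cancel in pairs and the endpoint terms vanish by zero flux), and add the two resulting identities. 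Using the one-dimensional Díaz--Sa\'a identity $(u_i)_x\,\partial_x\zeta_i+(w_i)_x\,\partial_x\eta_i=\big((u_i)_x-\tfrac{u_i}{w_i}(w_i)_x\big)^2+\big((w_i)_x-\tfrac{w_i}{u_i}(u_i)_x\big)^2$ and the algebraic identity $r_iu_i(1-u_i/k_i)\zeta_i+r_iw_i(1-w_i/k_i)\eta_i=-\tfrac{r_i}{k_i}(u_i-w_i)^2(u_i+w_i)$, one arrives at
\begin{equation*}
\sum_{i=1}^{n}\pi_id_i\int_{\Omega_i}\Big[\big((u_i)_x-\tfrac{u_i}{w_i}(w_i)_x\big)^2+\big((w_i)_x-\tfrac{w_i}{u_i}(u_i)_x\big)^2\Big]{\rm d}x+\sum_{i=1}^{n}\pi_i\int_{\Omega_i}\frac{r_i}{k_i}(u_i-w_i)^2(u_i+w_i)\,{\rm d}x=0.
\end{equation*}
Both sums are nonnegative, so each vanishes; since $u_i+w_i>0$, the second forces $u_i\equiv w_i$ on $\Omega_i$, hence on $\overline\Omega_i$, for every $i$. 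This proves uniqueness.

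\emph{Where the difficulty lies.} The existence half is routine once the supersolution/subsolution pair is identified; the only mild care needed is the linear comparison principle for the interface operator, which follows from coercivity of the weighted bilinear form and the weak maximum principle. The genuinely substantive step is the uniqueness argument, and within it the key point is verifying that $\pi_i\zeta_i$ and $\pi_i\eta_i$ are admissible for the weighted integration by parts — i.e. that $\zeta_i,\eta_i$ reproduce the jump factor $p_i$ — so that every interface boundary term telescopes away; after that the Díaz--Sa\'a inequality and the strict decrease of the logistic reaction in $u$ close the argument.
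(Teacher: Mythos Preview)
Your approach is essentially sound and genuinely different from the paper's. The paper does not work with the interface problem directly at all; instead it performs a piecewise-affine change of spatial variable $\xi$ (stretching patch $i$ by the factor $\prod_{j<i}p_j$) together with the rescaling $w_i=u_i/\prod_{j<i}p_j$, which converts \eqref{apm2} into an equivalent problem with \emph{continuous} density across the interfaces and modified diffusion and carrying-capacity coefficients, and then simply quotes the existence/uniqueness theorem of Jin--Peng--Shi \cite{42-Jin-2019} for that standard problem. Your route --- sub/supersolutions plus monotone iteration for existence, and a weighted D\'iaz--Sa\'a/Brezis--Oswald identity for uniqueness --- is self-contained and makes explicit why the weight $\pi_i$ is the natural object; the paper's route is slick and short but outsources the analytic content. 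Your uniqueness argument, in particular, is clean and correct as written: the key observation that $\zeta_i=(u_i^2-w_i^2)/u_i$ inherits the jump factor $p_i$ is exactly what makes the interface terms telescope.

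Two small repairs are needed on the existence side. First, your piecewise-constant supersolution must satisfy the interface \emph{equalities} (this is what Proposition~\ref{supersub} and the comparison principle Theorem~\ref{appcp} require, and what is needed for the semiflow to act on it), so rather than $c_{i+1}=\max\{k_{i+1},p_ic_i\}$ you should take $c_{i+1}=p_ic_i$ with $c_1\ge\max_i k_i/\prod_{j<i}p_j$, which forces $c_i\ge k_i$ for every $i$. Second, your derivation of $\lambda_1^0>0$ drops the diffusion term: multiplying the eigen-equation by $\pi_i\phi_i$, integrating, and summing actually gives
\[
\lambda_1^0\sum_i\pi_i\!\int_{\Omega_i}\phi_i^2\,{\rm d}x=\sum_i\pi_i\!\int_{\Omega_i} r_i\phi_i^2\,{\rm d}x-\sum_i\pi_id_i\!\int_{\Omega_i}(\phi_i)_x^2\,{\rm d}x,
\]
which is not obviously positive. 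The conclusion is nevertheless correct: since the operator is self-adjoint in the $\pi$-weighted $L^2$ space, $\lambda_1^0$ is the supremum of the associated Rayleigh quotient over $\mathcal H^1_u$, and the admissible piecewise-constant test function $\phi_i=\prod_{j<i}p_j$ (zero gradient, correct jumps) yields a quotient equal to a convex combination of the $r_i$'s, hence $\lambda_1^0\ge\min_ir_i>0$.
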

\begin{proof}
	Inspired by \cite{38-Maciel-2015, 33-Zaker-BMB-2019},
	we define
	\begin{equation*}
		\xi=\begin{cases}
			&x,\;\; x\in \Omega_1,\\
			&\xi_{i-1}+\prod_{j=1}^{i-1}p_j(x-x_{i-1}),\;\; x\in \Omega_i,\;\; i=2,\cdots,n,
		\end{cases}
	\end{equation*}
	where
	\begin{equation*}
		\xi_i=\begin{cases}
			&0,\;\; i=0,\\
			&x_1,\;\; i=1,\\
			&x_1+\sum_{\ell=1}^{i-1}\prod_{j=1}^{\ell}p_j(x_{\ell+1}-x_{\ell}), \;\; i=2,\cdots,n.
		\end{cases}
	\end{equation*}
	Clearly, $x\in \Omega_i$ if and only if $\xi\in \widetilde{\Omega}_i$ with $\widetilde{\Omega}_i=(\xi_{i-1},\xi_i)$. Then using the following scaling
	\begin{equation*}
		w_i=\begin{cases}
			&u_1,\;\; i=1,\\
			&\dfrac{u_i}{\prod_{j=1}^{i-1}p_j},\;\; i=2,\cdots,n,
		\end{cases}
	\end{equation*}
	we convert model \eqref{apm2} into the following equivalent form:
	\begin{equation}\label{apm3}
		\begin{cases}
			\ds D_i (w_{i})_{\xi\xi}+r_i w_i\left(1-\dfrac{w_i}{\widetilde{k}_i}\right)=0,  \;\;\xi\in \widetilde \Omega_i,\;\; i=1,2,\cdots,n,\\
			w_{i+1}(\xi_i^{+})=w_i(\xi_i^{-}),\;\;D_{i+1}(w_{i+1})_{\xi}(\xi_i^{+})=D_i (w_{i})_{\xi}(\xi_i^{-}),\;\;i=1,\cdots,n-1,\\
			(w_{1})_{\xi}(\xi_0^{+})=(w_{n})_{\xi}(\xi_n^{-})=0,
		\end{cases}
	\end{equation}
	where
	\begin{equation*}
		D_i=\begin{cases}
			d_1,\;\; i=1,\\
			d_i\left(\prod_{j=1}^{i-1}p_j\right)^2,\;\; i=2,\cdots,n,
		\end{cases}
		\;\;\text{and}\;\;
		\widetilde{k}_i=\begin{cases}
			k_1,\;\; i=1,\\
			\dfrac{k_i}{\prod_{j=1}^{i-1}p_j},\;\; i=2,\cdots,n.
		\end{cases}
	\end{equation*}
	Note that system \eqref{apm3} has continuous density and flux at interface. Then it follows from \cite[Theorem A.15]{42-Jin-2019} and the proof of \cite[Theorem 3.3]{42-Jin-2019} that system \eqref{apm3} admits a unique positive solution
	$\mathbf w^*\in \widetilde{X}$, where
	\begin{equation*}
		\widetilde{X}:=\{\mathbf w=(w_1,\cdots,w_n)\in C([0,\xi_n]):w_i\in C^2([\xi_{i-1},\xi_i])\}.
	\end{equation*}
	Consequently, system \eqref{apm2} admits a unique positive solution $\mathbf u^*\in \mathbf u^*\in C^2(\overline{\Omega}_1)\times \cdots \times C^2(\overline{\Omega}_n)$.		
	This completes the proof.
\end{proof}

We now present some properties of $\mathbf u^*$, which were originally derived in \cite[Theorem 1]{33-Zaker-BMB-2019} for the case $n=2$. Here we generalize these properties to the case where $n$ is finite but arbitrary.
\begin{lemma}\label{lemma2.1-n}
	 Let $\mathbf u\in C^2(\overline{\Omega}_1)\times\cdots\times C^2(\overline{\Omega}_n)$ be the unique positive solution of \eqref{m2}, and let $\mathbf{\overline k}$ be defined in \eqref{ok}. Then the following statements hold:
	\begin{enumerate}
		\item [$\rm{(i)}$] If $\mathbf p\gg\mathbf{\overline k}$, then $u^*_1(0^{+})< k_1$, $u^*_n(L^{-})> k_n$,
		$(u_i^*)_x<0$ in $\Omega_i$ for $i=1,\cdots,n$, and
		\begin{equation}\label{interf1}
			(u^*_{i+1})_x(x_i^{+}),\;(u^*_{i})_x(x_i^{-})<0\;\;\text{for}\;\;i=1,\cdots,n-1;
		\end{equation}
		\item [$\rm {(ii)}$] If $\mathbf{\overline k}\gg\mathbf p$, then $u^*_1(0^{+})> k_1$, $u^*_n(L^{-})< k_n$,
		$(u_i^*)_x>0$ in $\Omega_i$ for $i=1,\cdots,n$, and
		\begin{equation}\label{interf1-2}
			(u^*_{i+1})_x(x_i^{+}),\;(u^*_{i})_x(x_i^{-})>0\;\;\text{for}\;\;i=1,\cdots,n-1.
		\end{equation}
	\end{enumerate}
\end{lemma}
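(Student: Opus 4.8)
The plan is to pass to an equivalent problem with continuous density and flux and then argue patch by patch, using the concavity/convexity of the logistic profile together with its first integral. I would first apply the change of variables $x\mapsto\xi$ and $u_i\mapsto w_i$ from the proof of Theorem \ref{apt1}, which converts \eqref{m2} into \eqref{apm3}: the same logistic equations on intervals $\widetilde\Omega_i$, but now with \emph{continuous} density and flux at every interface $\xi_i$, no-flux conditions at $\xi_0$ and $\xi_n$, and rescaled carrying capacities $\widetilde k_i=k_i/\prod_{j<i}p_j$. The decisive point is that $\mathbf p\gg\overline{\mathbf k}$ is exactly $\widetilde k_1>\widetilde k_2>\cdots>\widetilde k_n$, while $\overline{\mathbf k}\gg\mathbf p$ is exactly $\widetilde k_1<\cdots<\widetilde k_n$. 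Since $\xi(x)$ is increasing and each $w_i$ is a fixed positive multiple of $u_i$, the sign of $(u_i^*)_x$ on $\Omega_i$ equals that of $(w_i^*)_\xi$ on $\widetilde\Omega_i$; also $u_1^*(0^+)=w_1^*(\xi_0^+)$ with $k_1=\widetilde k_1$, and $u_n^*(L^-)=(\prod_{j<n}p_j)w_n^*(\xi_n^-)$ with $k_n=(\prod_{j<n}p_j)\widetilde k_n$. As the reflection $\xi\mapsto\xi_n-\xi$ preserves the form of \eqref{apm3} and reverses the order of the $\widetilde k_i$, it suffices to show: if $\widetilde k_1>\cdots>\widetilde k_n$, then $(w^*)_\xi<0$ on $(\xi_0,\xi_n)$, $w_1^*(\xi_0^+)<\widetilde k_1$ and $w_n^*(\xi_n^-)>\widetilde k_n$.

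The local ingredients are as follows. From the equation on $\widetilde\Omega_i$, the sign of $(w_i^*)_{\xi\xi}$ is opposite to that of $w_i^*-\widetilde k_i$, so $w_i^*$ is concave where it is below $\widetilde k_i$ and convex where it is above; and by uniqueness, $w_i^*(\xi^*)=\widetilde k_i$ together with $(w_i^*)_\xi(\xi^*)=0$ forces $w_i^*\equiv\widetilde k_i$. The first integral $\tfrac{D_i}{2}\big((w_i^*)_\xi\big)^2+V_i(w_i^*)\equiv\mathrm{const}$, where $V_i'(s)=r_i s(1-s/\widetilde k_i)$ (hence $V_i$ is increasing on $(0,\widetilde k_i)$ and decreasing on $(\widetilde k_i,\infty)$), forbids an interior local minimum of $w_i^*$ of value below $\widetilde k_i$ and an interior local maximum of value above $\widetilde k_i$, and shows that if a no-flux condition holds at one endpoint of $\widetilde\Omega_i$ then $w_i^*$ is $\equiv\widetilde k_i$ or strictly monotone, the direction being tied to whether the no-flux-endpoint value lies below or above $\widetilde k_i$. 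Finally, flux matching $D_i(w_i^*)_\xi(\xi_i^-)=D_{i+1}(w_{i+1}^*)_\xi(\xi_i^+)$ shows $(w^*)_\xi$ keeps its sign across every interface.

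With these in hand, let $\xi_{\max}$ be a point where the continuous positive function $w^*$ attains its maximum $M$ on $[\xi_0,\xi_n]$. If $\xi_{\max}$ were interior to a patch $i$, then $M\le\widetilde k_i$: when $M<\widetilde k_i$ all of $w_1^*,\dots,w_i^*$ are concave, so $(w_1^*)_\xi(\xi_0^+)=0$ propagates (across the interfaces, using flux matching) to $(w_i^*)_\xi<0$ on $(\xi_{i-1},\xi_i]$, contradicting $(w_i^*)_\xi(\xi_{\max})=0$; and when $M=\widetilde k_i$, $w_i^*\equiv\widetilde k_i$, whereupon flux matching at $\xi_i$ and $\widetilde k_i>\widetilde k_{i+1}$ make $w_{i+1}^*$ convex and strictly increasing, a property that propagates through all later patches and contradicts the no-flux condition at $\xi_n$ (symmetrically at $\xi_{i-1}$). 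The possibilities $\xi_{\max}=\xi_n$ or $\xi_{\max}$ at an interface are excluded by the same device, so $\xi_{\max}=\xi_0$; then $w_1^*$ is strictly decreasing with $w_1^*(\xi_0^+)<\widetilde k_1$, hence concave on $\widetilde\Omega_1$ with $(w_1^*)_\xi(\xi_1^-)<0$. I would then prove by induction on $i$ that $w_{i+1}^*$ is strictly decreasing on $\widetilde\Omega_{i+1}$: flux matching gives $(w_{i+1}^*)_\xi(\xi_i^+)<0$, and the only way $w_{i+1}^*$ could stop decreasing is at an interior local minimum, which must have value above $\widetilde k_{i+1}$, past which $w_{i+1}^*$ is convex and increasing — again contradicting, after propagation to $\xi_n$, the no-flux condition there. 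Evaluating at $\xi_n$ yields $w_n^*(\xi_n^-)>\widetilde k_n$; reading everything back through the change of variables gives (i), and (ii) follows from the reflection $\xi\mapsto\xi_n-\xi$.

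The main obstacle is the bookkeeping in the last two steps. Because the rescaled diffusion coefficient jumps at the interfaces, $(w^*)_\xi$ is itself discontinuous there — only the flux is continuous — so one cannot apply a single maximum principle to $(w^*)_\xi$ on $[\xi_0,\xi_n]$; instead one must track its sign patch by patch, at each interface using flux matching to cross over and the strict ordering $\widetilde k_1>\cdots>\widetilde k_n$ to rule out a turning point, with the two no-flux conditions supplying the final contradiction. Organizing this into a clean induction that simultaneously controls $(w_i^*)_\xi$ on the open patch and at the right interface, and that disposes of the degenerate possibilities $w_i^*\equiv\widetilde k_i$ and of a critical point landing exactly on an interface, is where the care is required.
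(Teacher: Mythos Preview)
Your argument is correct and takes a genuinely different route from the paper's. The paper works in the original variables and proceeds by induction on the number of patches $n$: it first shows $(u_i^*)_x\le 0$ by a trapping argument (locating the maximal interval on which the derivative is positive and using $\mathbf p\gg\overline{\mathbf k}$ together with the jump conditions to force a contradiction), then settles $n=2$ directly, and for $n=m\ge 3$ observes that any vanishing of $(u_i^*)_x$ at an interior point or interface splits \eqref{m2} into two smaller subsystems to which the induction hypothesis applies, yielding incompatible one-sided values at the splitting point. Your approach, by contrast, first passes to the continuous-density problem \eqref{apm3} so that $\mathbf p\gg\overline{\mathbf k}$ becomes the ordering $\widetilde k_1>\cdots>\widetilde k_n$, then runs a phase-plane argument (concavity/convexity and the first integral) together with a global-maximum analysis and an induction on the patch index rather than on $n$. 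What you gain is a single self-contained argument that never invokes the result for fewer patches and that makes the mechanism (monotone carrying capacities force monotone profiles) transparent; what the paper's method gains is that it stays in the original coordinates and its decomposition-into-subsystems device is reused later in the nonexistence proofs of Section~\ref{ss1}. One small point worth tightening in your write-up: in the subcase $M=\widetilde k_i$ with $i=n$, the contradiction is obtained not ``symmetrically at $\xi_{i-1}$'' but by propagating from $\xi_0$ through the concave patches $1,\ldots,n-1$ (since $M=\widetilde k_n<\widetilde k_j$ for $j<n$) to force $(w_{n-1}^*)_\xi(\xi_{n-1}^-)<0$, contradicting the flux match with $w_n^*\equiv\widetilde k_n$.
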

\begin{proof}
	We only prove (i), and part (ii) can be addressed similarly. The proof relies on the following three claims.
	
	\emph{Claim 1:}  We claim that
	$(u^*_{i})_x\le0$ in $\Omega_i$ for $i=1,\cdots,n$.

	\noindent\emph{Proof of Claim:} If the claim is not true, then there exist $1\le \ell_0\le n$ and $y_0\in \Omega_{\ell_0}$ such that
	$(u^*_{\ell_0})_x(y_0)>0.$
	It follows from \eqref{m2c} that $y_1,y_2$  are well-defined with $y_1<y_2$, where
	\begin{equation*}
		\begin{aligned}
			&y_1=\sup \{y:\exists\; 1\le \ell\le \ell_0\;\; \text{s.t.}\;\; y\in [x_{\ell-1},x_\ell) \;\;\text{and} \;\; (u^*_{\ell})_x(y^+)=0\},\\
			&y_2=\inf \{y:\exists\; \ell_0\le \ell\le n \text{ s.t. } y\in (x_{\ell-1},x_\ell]\;\;\text{and}\;\;(u^*_{\ell})_x(y^-)=0\},\\
		\end{aligned}
	\end{equation*}
	and there exist $\ell_1$ and $\ell_2$ with $1\le \ell_1\le \ell_0$ and $\ell_0\le \ell_2\le n$ such that $y_1\in [x_{\ell_1-1},x_{\ell_1})$ and $y_2\in (x_{\ell_2-1},x_{\ell_2}]$.  By the definitions of $y_1$ and $y_2$,
	\begin{equation}\label{iva1-1}
		(u^*_{\ell_1})_{xx}(y^+_1)\ge 0,\;\;(u^*_{\ell_2})_{xx}(y^-_2)\le 0,\;\;(u^*_{\ell_1})_x(y_1^+)=(u^*_{\ell_2})_x(y_2^-)=0,
	\end{equation}
	and
	\begin{subequations}\label{iva1}
		\begin{align}
			&(u^*_{i})_x>0\;\;\text{in}\;\;\Omega_i\cap(y_1,y_2) \;\;\text{for}\;\;i=\ell_1,\cdots,\ell_2,\;\;\label{iva1-a}\\
			&(u^*_{i+1})_x(x_i^{+}),\;(u^*_i)_x(x_i^{-})>0 \;\;\text{for}\;\;i=\ell_1,\cdots,\ell_2-1.\label{iva1-b}
		\end{align}
	\end{subequations}
	It follows from \eqref{m2a} and \eqref{iva1-1} that
	\begin{equation}\label{iva1-2}
		u^*_{\ell_1}(y_1^+)\ge k_{\ell_1}\;\;\text{and}\;\;u^*_{\ell_2}(y_2^-)\le k_{\ell_2},
	\end{equation}
	which contradicts \eqref{iva1-a} if $\ell_1=\ell_2$.
	If $\ell_1<\ell_2$, we deduce from  \eqref{iva1}-\eqref{iva1-2} that
	$k_{\ell_2}>u^*_{\ell_2}(x_{\ell_2-1}^+)$ and $u^*_{\ell_1}(x_{\ell_1}^-)>k_{\ell_1}$, and consequently,
	\begin{equation*}
		k_{\ell_2}>u^*_{\ell_2}(x_{\ell_2-1}^+)=p_{\ell_2-1}u^*_{\ell_2-1}(x_{\ell_2-1}^-)>\cdots >\left(\prod_{\ell=\ell_1}^{\ell_2-1}p_{\ell}\right)u^*_{\ell_1}(x_{\ell_1}^-)>k_{\ell_2},
	\end{equation*}
	where we have used \eqref{m2b} and $\mathbf p\gg\mathbf{\overline k}$ in the second and last steps, respectively. This also leads to a contradiction. Thus, Claim 1 holds.
	
	
	\emph{Claim 2:}	We claim that (i) holds for the case $n=2$.
	
	\noindent\emph{Proof of Claim:} We first show that \eqref{interf1} holds for $n=2$. Suppose not,
	then we see from \eqref{m2b} and Claim 1 that
	$d_2(u^*_{2})_{x}(x_1^+)=d_1(u^*_{1})_{x}(x_1^-)=0$,
	which implies that $u^*_i=k_i$ in $\Omega_i$ for each $i=1,2$. This combined with $\mathbf p\gg\mathbf{\overline{k}}$ yields $k_2=u_2(x_1^+)=p_1u_1(x_1^-)>k_2$, which is a contradiction. Therefore, \eqref{interf1} holds with $n=2$.
	Noticing that $(u^*_1)_x(0^{+})=0$, by Claim 1 again, we have $(u^*_1)_{xx}(0^{+})\le 0$, which yields $u_1^*(0^+)\le k_1$. If $u_1^*(0^+)= k_1$, then
	$u_1^*$ satisfies
	\begin{equation}\label{1l2}
		\begin{cases}
			\ds d_1 (u^*_1)_{ x x}+r_1 u^*_1\left(1-\frac{u^*_1}{k_1}\right)=0, & 0<x<x_1,\\
			(u^*_{1})_{x}(0^+)=0,\;\;u^*_1(0^+)=k_1.
		\end{cases}
	\end{equation}
	The uniqueness of the solution of \eqref{1l2} implies that $u_1^*\equiv k_1$, which contradicts \eqref{interf1} with $i=1$.
	Thus, $u^*_1(0^+)<k_1$, and $u^*_2(L^-)>k_2$ can be proved similarly.
	Finally, we show
	that $(u_i^*)_x<0$ in $\Omega_i$ for each $i=1,2$. Suppose not, then there exist  $1\le \tilde i\le 2$ and $\tilde x\in\Omega_{\tilde i}$ such that  $(u_{\tilde i}^*)_x(\tilde x)\ge0$. Without loss of generality, we assume that $\tilde i=1$.
	Then it follows from  Claim 1 that $(u_1^*)_x(\tilde x)=0$, and consequently, $u^*_1(x)=k_1$ for $x \in [0,\tilde x]$, which contradicts $u^*_1(0^+)<k_1$.
	This proves the claim.
	
	\emph{Claim 3:}	We claim that (i) holds for $n=m$ with $m\ge 3$ if it holds for $n=2,\cdots,m-1$.
	
	\noindent\emph{Proof of Claim:} 	
	We first show that
	\begin{equation}\label{1lm-1}
		(u_i^*)_x<0\;\;\text{in}\;\;\Omega_i\;\;
		\text{for}\;\;i=2,\cdots,m-1.
	\end{equation}
	Assume to the contrary that \eqref{1lm-1} does not hold. In view of Claim 1, there exist $2\le \ell_*\le m-1$ and $x_*\in \Omega_{\ell_*}$ such that $(u^*_{\ell_*})_{x} (x_*)=0$. Then \eqref{m2} can be rewritten
	as the following two subsystems:
	\begin{equation}\label{m2-sub1}
		\begin{cases}
			\ds d_i (u^*_{i})_{xx}+r_i u^*_i\left(1-\frac{u^*_i}{k_i}\right)=0, \;\;\;\;x\in \tilde\Omega_i,\;\; i=1,2,\cdots,\ell_*,\\
			u^*_{i+1}(x_i^{+})=p_iu^*_i(x_i^{-}),\;\;d_{i+1}(u^*_{i+1})_x(x_i^{+})=d_i(u^*_i)_x(x_i^{-}),\\
			(u^*_1)_x(0^{+})=(u^*_{\ell_*})_x(x_*^{-})=0,
		\end{cases}
	\end{equation}
	and
	\begin{equation}\label{m2-sub2}
		\begin{cases}{}
			\ds d_i (u^*_{i})_{xx}+r_i u^*_i\left(1-\frac{u^*_i}{k_i}\right)=0, \;\;\;\;x\in \hat\Omega_i,\;\; i=\ell_*,\cdots,m, \\
			u^*_{i+1}(x_i^{+})=p_iu^*_i(x_i^{-}),\;\;d_{i+1}(u^*_{i+1})_x(x_i^{+})=d_i(u^*_i)_x(x_i^{-}),\\
			(u^*_{\ell_*})_x(x_*^{+})=(u^*_m)_x(L^{-})=0,
		\end{cases}
	\end{equation}
	where
	\begin{equation*}
		\tilde\Omega_i=\begin{cases}
			\Omega_i,\;\;&i=1,\cdots,\ell_*-1,\\
			(x_{\ell_*-1},x_*),\;\;&i=\ell_*,
		\end{cases}
	\end{equation*}
	and
	\begin{equation*}		
		\hat\Omega_i=\begin{cases}
			(x_*,x_{\ell_*}),\;\;&i=\ell_*,\\
			\Omega_i,\;\;&i=\ell_*+1,\cdots,m.
		\end{cases}
	\end{equation*}
	
	Since $\mathbf p\gg\mathbf{\overline{k}}$, it follows that $[\mathbf p]_{\ell_*-1}\gg[\mathbf{\overline{k}}]_{\ell_*-1}$ and $[\mathbf p]^{\sharp}_{m-\ell_*}\gg[\mathbf{\overline{k}}]^{\sharp}_{m-\ell_*}$, where $[\cdot]_{\ell_*-1}$ and $[\cdot]^{\sharp}_{m-\ell_*}$ are defined in \eqref{truct}.
	Note that (i) holds for $n=2,\cdots,m-1$, and subsystems \eqref{m2-sub1} and \eqref{m2-sub2} consist of $\ell_*$ and $m-\ell_*+1$ patches, respectively, where $2\le\ell_*,m-\ell_*+1\le m-1$.
	Then $u^*_{\ell_*}(x^-_*)> k_{\ell_*}$ and $u^*_{\ell_*}(x^+_*)< k_{\ell_*}$, which is a contradiction. Thus, \eqref{1lm-1} holds.
	
	Then we show that \eqref{interf1} holds for $n=m$.
	If it is not true, by Claim 1 again, there exists $1\le \tilde \ell\le m-1$ such that $(u_{\tilde \ell}^*)_x(x_{\tilde \ell}^-)=0$.
	Then we can also rewrite \eqref{m2} into the following two subsystems:
	\begin{subequations}\label{m2-sub3}
		\begin{numcases}{}
			\ds d_i (u^*_{i})_{xx}+r_i u^*_i\left(1-\frac{u^*_i}{k_i}\right)=0, \;\;\;\;x\in \Omega_i,\;\; i=1,\cdots,\tilde \ell, \label{m2a-sub3}\\
			u^*_{i+1}(x_i^{+})=p_iu^*_i(x_i^{-}),\;\;d_{i+1}(u^*_{i+1})_x(x_i^{+})=d_i(u^*_i)_x(x_i^{-}),\label{m2b-sub3}\\
			(u^*_1)_x(0^{+})=(u^*_{\tilde \ell})_x(x_{\tilde \ell}^{-})=0,\label{m2c-sub3}
		\end{numcases}
	\end{subequations}
	and
	\begin{subequations}\label{m2-sub4}
		\begin{numcases}{}
			\ds d_i (u^*_{i})_{xx}+r_i u^*_i\left(1-\frac{u^*_i}{k_i}\right)=0, \;\;\;\;x\in \Omega_i,\;\; i=\tilde \ell+1,\cdots,m, \label{m2a-sub4}\\
			u^*_{i+1}(x_i^{+})=p_iu^*_i(x_i^{-}),\;\;d_{i+1}(u^*_{i+1})_x(x_i^{+})=d_i(u^*_i)_x(x_i^{-}),\label{m2b-sub4}\\
			(u^*_{\tilde \ell+1})_x(x_{\tilde \ell}^{+})=(u^*_m)_x(L^{-})=0.\label{m2c-sub4}
		\end{numcases}
	\end{subequations}
	Here we remark that, if  $\tilde \ell=1$, \eqref{m2-sub3} holds except for \eqref{m2b-sub3},  and if $\tilde \ell=m-1$, \eqref{m2-sub4} holds except for \eqref{m2b-sub4}.

	We first obtain a contradiction for the case $2\le \tilde \ell\le m-2$. Since $\mathbf p\gg\mathbf{\overline{k}}$, it follows that
	$[\mathbf p]_{\tilde\ell-1}\gg[\mathbf{\overline{k}}]_{\tilde\ell-1}$ and $[\mathbf p]^{\sharp}_{m-\tilde\ell-1}\gg[\mathbf{\overline{k}}]^{\sharp}_{m-\tilde\ell-1}.$
	Note that (i) holds for $n=2,\cdots,m-1$, and subsystems \eqref{m2-sub3} and \eqref{m2-sub4} consist of $\tilde \ell$ and $m-\tilde \ell$ patches, respectively, where $2\le \tilde \ell,m-\tilde \ell\le m-2$. Thus, $u^*_{\tilde\ell}(x_{\tilde\ell}^{-})> k_{\tilde\ell}$ and $u^*_{\tilde\ell+1}(x_{\tilde\ell}^{+})< k_{\tilde\ell+1}$, and consequently,
	\begin{equation}\label{tildeell}
		\frac{ u^*_{\tilde \ell+1}(x_{\tilde \ell}^{+})}{u^*_{\tilde \ell}(x_{\tilde \ell}^{-})}=p_{\tilde \ell}<\frac{k_{\tilde\ell+1}}{k_{\tilde\ell}},
	\end{equation}
	which contradicts $\mathbf p\gg\mathbf{\overline k}$.
	If $\tilde \ell=1$, then $u_{\tilde \ell}^*= k_{\tilde \ell}$ in $\Omega_{\tilde \ell}$.
	Note that $[\mathbf p]^{\sharp}_{m-\tilde\ell-1}\gg[\mathbf{\overline{k}}]^{\sharp}_{m-\tilde\ell-1}$ and subsystems \eqref{m2-sub4} consist of $m-1$ patches. Then
	$u^*_{\tilde\ell+1}(x_{\tilde\ell}^{+})< k_{\tilde\ell+1}$, which implies that \eqref{tildeell} also holds. This leads to a contradiction. Using similar arguments, we can obtain a contradiction for the case
	$\tilde \ell=m-1$. Therefore, \eqref{interf1} holds for $n=m$.
	Finally, using arguments similar to those in the proof of Claim 2, we can obtain that $u_1^*(0^+)<k_1$, $u_m^*(L^-)>k_m$ and
	$(u^*_i)_x<0$ in $\Omega_i$ for $i=1,m$. This completes the proof of the claim.
	
	Therefore, (i) holds by Claims 2-3 and induction.
\end{proof}

For the two-patch case, the steady state $\mathbf{u^*}$ is necessarily monotonic. However, this monotonicity can be lost for $n \geq 3$. While Lemma \ref{lemma2.1-n} provides sufficient conditions for monotonicity, the steady state may otherwise be non-monotonic, as illustrated by the concrete example below.
\begin{proposition}\label{example}
	Suppose that $n=3$ and let $\mathbf u^*$ be the unique positive solution of \eqref{m2}. If
	$k_1=k_3:=k$ and $p_1p_2=1$ with $p_1\neq k_2/k$ (or equivalently, $p_2\neq k/k_2$), then $\mathbf u^*$ is non-monotonic.
\end{proposition}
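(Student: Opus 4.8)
The plan is to argue by contradiction. Suppose $\mathbf u^*$ is monotone, where ``monotone'' is understood in the sense of Lemma~\ref{lemma2.1-n}: $(u_i^*)_x$ has one fixed sign on $\Omega_i$ for every $i=1,2,3$, and that sign is the same across all three patches (equivalently, the continuous transformed profile from Theorem~\ref{apt1} is monotone). The reflection $x\mapsto L-x$ carries the class of systems described in the statement (it swaps $d_1\leftrightarrow d_3$, $r_1\leftrightarrow r_3$, the outer patch lengths, and preserves $k_1=k_3$, $p_1p_2=1$) into itself while interchanging the non-increasing and non-decreasing cases, so it suffices to derive a contradiction assuming $(u_i^*)_x\le 0$ on $\Omega_i$ for $i=1,2,3$; I will point out the trivial sign changes for the other case at the end.

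\emph{Step 1 (pinching at the no-flux ends).} Evaluating \eqref{m2a} at $x=0^+$ and using $(u_1^*)_x(0^+)=0$ from \eqref{m2c} gives $d_1(u_1^*)_{xx}(0^+)=-r_1u_1^*(0^+)\bigl(1-u_1^*(0^+)/k_1\bigr)$. If $u_1^*(0^+)>k_1$ this is positive, forcing $(u_1^*)_x>0$ just to the right of $0$, which contradicts $(u_1^*)_x\le 0$; if $u_1^*(0^+)=k_1$, then $u_1^*\equiv k_1$ on $\overline\Omega_1$ by uniqueness for the logistic ODE with data $u(0)=k_1,\ u_x(0)=0$. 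Hence $u_1^*(0^+)\le k_1=k$. Running the identical argument at $x=L^-$ with $(u_3^*)_x(L^-)=0$ yields $u_3^*(L^-)\ge k_3=k$, with equality forcing $u_3^*\equiv k$ on $\overline\Omega_3$.

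\emph{Steps 2--3 (telescoping and rigidity).} Using $(u_i^*)_x\le 0$ on each $\Omega_i$, the density interface conditions in \eqref{m2b}, and $p_1p_2=1$,
\[
u_1^*(0^+)\ \ge\ u_1^*(x_1^-)\ =\ \frac{u_2^*(x_1^+)}{p_1}\ \ge\ \frac{u_2^*(x_2^-)}{p_1}\ =\ \frac{u_3^*(x_2^+)}{p_1p_2}\ =\ u_3^*(x_2^+)\ \ge\ u_3^*(L^-).
\]
Combining with Step~1 gives $k\ge u_1^*(0^+)\ge u_3^*(L^-)\ge k$, so every inequality above is an equality and $u_1^*(0^+)=u_3^*(L^-)=k$. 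In particular $u_1^*\equiv k$ on $\overline\Omega_1$, so $u_1^*(x_1^-)=k$ and, by \eqref{m2b}, $u_2^*(x_1^+)=p_1k$; the equality $u_2^*(x_1^+)=u_2^*(x_2^-)$ together with $(u_2^*)_x\le 0$ on $\Omega_2$ forces $u_2^*\equiv p_1k$ on $\overline\Omega_2$. Substituting this constant into \eqref{m2a} on $\Omega_2$ gives $r_2(p_1k)\bigl(1-p_1k/k_2\bigr)=0$, i.e.\ $p_1=k_2/k$, contradicting the hypothesis. The non-decreasing case is the same three steps with all inequalities reversed (giving $u_1^*(0^+)\ge k$ and $u_3^*(L^-)\le k$), leading again to $u_1^*(0^+)=u_3^*(L^-)=k$ and the same contradiction.

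\emph{On the difficulty.} There is no deep obstacle; the content is entirely in the bookkeeping. The key observation is that the two density jumps compose with ratio $p_1p_2=1$, so the shared boundary carrying capacity $k=k_1=k_3$ is ``pinched'' simultaneously at both no-flux ends, which then forces the middle patch onto the flat state $u_2^*\equiv p_1k$ — compatible with the logistic equation precisely at the ideal free value $p_1=k_2/k$. The points needing care are (i) handling both monotonicity directions (or justifying the reflection reduction), (ii) fixing the correct meaning of ``monotone'' as a single fixed sign of $(u_i^*)_x$ on every patch, and (iii) extracting the rigidity $u_1^*\equiv k$ and $u_2^*\equiv p_1k$ from the equality cases in Steps~1--2.
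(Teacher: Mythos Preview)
Your proof is correct and takes a genuinely different route from the paper's. The paper does not argue against global monotonicity directly; instead it first shows (by a separate contradiction) that $(u_2^*)_x$ must vanish at some interior point $x_*\in(x_1,x_2)$, then splits the three-patch problem at $x_*$ into two two-patch problems and applies Lemma~\ref{lemma2.1-n} to each, obtaining $(u_i^*)_x<0$ on the left piece and $(u_i^*)_x>0$ on the right. Your argument bypasses Lemma~\ref{lemma2.1-n} entirely: the telescoping identity, using $p_1p_2=1$ and $k_1=k_3=k$, pinches $u_1^*(0^+)=u_3^*(L^-)=k$ and forces every patch onto a constant, so that $u_2^*\equiv p_1k$ must satisfy the logistic equation, giving the forbidden $p_1=k_2/k$. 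Your approach is shorter and fully self-contained; the paper's approach, in exchange for invoking the two-patch lemma, yields the extra structural information that the turning point actually lies in the middle patch $\Omega_2$ and that the profile is strictly decreasing on $(0,x_*)$ and strictly increasing on $(x_*,L)$.
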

\begin{proof}
	Without loss of generality, we assume that $p_1 > k_2 / k$. Then it follows from $p_1 p_2 = 1$ that $p_2 < k / k_2$. We claim that there exists $x_*\in (x_1, x_2)$ such that $(u^*_2)_x(x_*) = 0$. If it is not true, then  $(u_2^*)_x>0$ or $(u_2^*)_x<0$ in $(x_1,x_2)$. We only derive a contradiction for the case $(u_2^*)_x>0$ in $(x_1,x_2)$, as the other case is analogous.
	
	By continuity of $(u_2^*)_x$ on $[x_1,x_2]$, we have $(u_2^*)_x(x_1^+), (u_2^*)_x(x_2^-)\ge0$. We now divide the arguments into two cases:
		\begin{equation*}
		{\rm(a_1)}\;(u_2^*)_x(x_1^+), (u_2^*)_x(x_2^-)>0;\;\;{\rm(a_2)}\;(u_2^*)_x(x_1^+)=0\;\;\text{or} \;\;(u_2^*)_x(x_2^-)=0.
	\end{equation*}
	For case $(a_1)$, we see from \eqref{m2c} that
	\begin{equation}\label{y1y2}
		y_1:=\sup \{y\in [0,x_1): (u^*_{1})_x(y^+)=0\}\;\;\text{and}\;\;y_2:=\inf \{y\in (x_2,L]: (u^*_{3})_x(y^-)=0\}
	\end{equation}
	are well-defined with $0\le y_1<x_1<x_2< y_2\le L$. Thus, we have
	\begin{equation}\label{dis2}
	(u^*_{1})_{xx}(y_1^+)\ge 0,\;\;(u^*_{3})_{xx}(y_2^-)\le 0
	\end{equation}
	and
	\begin{equation}\label{dis2-1}
			(u^*_{i})_x>0\;\;\text{in}\;\;\Omega_i\cap(y_1,y_2) \;\;\text{for}\;\;i=1,2,3.
	\end{equation}
	It follows from \eqref{m2a} and \eqref{dis2} that $u^*_1(y_1^+)\ge k_1=k$ and $u^*_3(y_2^-)\le k_3= k$. This combined with \eqref{dis2-1} implies that
	\begin{equation}\label{dis4}
		k> u^*_3(x_2^+)=p_2u^*_2(x_2^-)>p_2u^*_2(x_1^+)=p_1p_2u^*_1(x_1^-)> k,
	\end{equation}
	where we have used \eqref{m2b} in the second and fourth steps. This leads to a contradiction.

	Now we derive a contradiction for case $(a_2)$.  For simplicity, we only consider the case that $(u_2^*)_x(x_1^+)=0$ and $(u_2^*)_x(x_2^-)>0$, as other cases can be treated similarly. Then $\mathbf u^*$ satisfies
	\begin{equation}\label{dis1}
		\begin{cases}
			\ds d_1 (u^*_1)_{ x x}+r_1 u^*_1\left(1-\frac{u^*_1}{k}\right)=0, & 0<x<x_1,\\
			(u^*_{1})_{x}(0)=	(u^*_{1})_{x}(x_1^-)=0.
		\end{cases}
	\end{equation}
	The uniqueness of the solution to \eqref{dis1} implies $u^*_1\equiv k$ in $[0,x_1]$. Thus, by an argument similar to that  in \eqref{dis4}, we obtain
		\begin{equation*}
		k\ge u^*_3(y_2^-)> u^*_3(x_2^+)=p_2u^*_2(x_2^-)>p_2u^*_2(x_1^+)=u^*_1(x_1^-)=k,
	\end{equation*}
where $y_2$ is defined as in \eqref{y1y2}. This also leads to a contradiction. Therefore, the claim holds.
	
	Then we can rewrite \eqref{m2} into the following two subsystems:
	\begin{equation*}
		\begin{cases}
			\ds d_i (u^*_{i})_{xx}+r_i u^*_i\left(1-\frac{u^*_i}{k_i}\right)=0, \;\;\;\;x\in \tilde\Omega_i,\;\; i=1,2,\\
			u^*_{2}(x_1^{+})=p_1u^*_1(x_1^{-}),\;\;d_{2}(u^*_{2})_x(x_1^{+})=d_1(u^*_1)_x(x_1^{-}),\\
			(u^*_1)_x(0^+)=(u^*_{2})_x(x_*^-)=0,
		\end{cases}
	\end{equation*}
	and
	\begin{equation*}
		\begin{cases}
			\ds d_i (u^*_{i})_{xx}+r_i u^*_i\left(1-\frac{u^*_i}{k_i}\right)=0, \;\;\;\;x\in \hat\Omega_i,\;\; i=2,3,\\
			u^*_{3}(x_2^{+})=p_2u^*_2(x_2^{-}),\;\;d_{3}(u^*_{3})_x(x_2^{+})=d_2(u^*_2)_x(x_2^{-}),\\
			(u^*_2)_x(x_*^+)=(u^*_{3})_x(L^-)=0,
		\end{cases}
	\end{equation*}
	where
	\begin{equation*}
		\tilde\Omega_i=\begin{cases}
			\Omega_1,\;\;&i=1,\\
			(x_{1},x_*),\;\;&i=2,
		\end{cases}
	\end{equation*}
	and
	\begin{equation*}		
		\hat\Omega_i=\begin{cases}
			(x_*,x_{2}),\;\;&i=2,\\
			\Omega_3,\;\;&i=3.
		\end{cases}
	\end{equation*}
Observing that $p_1 > k_2 / k$ and $p_2 < k / k_2$, by applying Lemma \ref{lemma2.1-n} to each subsystem, we see $(u^*_i)_x < 0$ in $\tilde{\Omega}_i$ for $i = 1, 2$, and $(u^*_i)_x > 0$ in $\hat{\Omega}_i$ for $i = 2, 3$. Consequently, $\mathbf{u}^*$ is non-monotonic on $[0,L]$.
\end{proof}

\section{Comparison principle}
	Throughout this section, we define an order
in $\left(C(\overline \Omega_1)\times\cdots\times C(\overline \Omega_n)\right)^2$: $({\mathbf u}^1,\mathbf{v}^1)\succeq (\mathbf {u}^2,\mathbf{v}^2)$
if and only if
\begin{equation}\label{order}
	u^1_i(x)\ge u^2_i(x) \;\;\text{and}\;\; v^1_i(x)\le v^2_i(x)\;\;\text{for all}\;\;x\in\overline\Omega_i\;\;\text{and}\;\;i=1,\cdots,n,
\end{equation}
Then we show that system \eqref{m1} preserves the order ``$\succeq$''.

\begin{theorem}\label{appcp}
	Let $\overline{\mathbf u},\underline{\mathbf v},\overline{\mathbf v},\underline{\mathbf u}\in Y^+$,
	where
\begin{equation*}
\begin{aligned}
	Y^+ := \{
	&(u_1, \cdots, u_n): u_i \in C^{2,1}\left (\overline{\Omega}_i \times (0, T) \right) \cap C\left( \overline{\Omega}_i \times [0, T) \right) \\
		&u_i \ge 0 \text{ in } \overline{\Omega}_i \times [0, T)\;\;\text{for all}\;\;i=1,\cdots,n\}
	\end{aligned}
\end{equation*}
	with $T \in(0,+\infty]$.
	Suppose that $(\overline{\mathbf u},\underline{\mathbf v})$ and $(\underline{\mathbf u},\overline{\mathbf v})$ satisfy, for $i=1,\cdots,n-1$,
	\begin{equation}\label{B2}
		\begin{split}
			&u_{i+1}(x_i^{+},t)=p_i u_i(x_i^{-},t),\;d_{i+1}( u_{i+1})_{x}(x_i^{+},t)=d_i ( u_{i})_{x}(x_i^{-},t),\;t>0,\\
			&v_{i+1}(x_i^{+},t)=\hat p_iv_i(x_i^{-},t),\;\hat d_{i+1}( v_{i+1})_{x}(x_i^{+},t)=\hat d_i ( v_{i})_{x}(x_i^{-},t),\;t>0,\\
		\end{split}
	\end{equation}
	and for $i=1,\cdots,n$,
	\begin{equation}\label{compr}
		\begin{cases}
			(\overline{u}_i)_t-d_i( \overline{u}_i)_{xx}-f_i(\overline{u}_i, \underline{v}_i)\ge0\ge	(\underline{u}_i)_t-d_i(\underline{u}_i)_{xx}-f_i(\underline{u}_i, \overline{v}_i),\;x\in \Omega_i,\;t>0,\\
			(\overline{v}_i)_t-\hat d_i( \overline{v}_i)_{xx}-\hat{f}_i(\underline{u}_i,\overline{v}_i)\ge0\ge(\underline{v}_i)_t-\hat d_i( \underline{v}_i)_{xx}-\hat{f}_i(\overline{u}_i,\underline{v}_i),\;x\in \Omega_i,\;t>0,\\ (\overline{u}_1)_x(0^{+},t)\le0\le(\underline{u}_1)_x(0^{+},t),\;(\overline{u}_{n})_x(L^{-},t)\ge0\ge(\underline{u}_{n})_x(L^{-},t),\;\;\;t>0,\\ (\overline{v}_1)_x(0^{+},t)\le0\le(\underline{v}_1)_x(0^{+},t),\;(\overline{v}_{n})_x(L^{-},t)\ge0\ge(\underline{v}_{n})_x(L^{-},t),\;\;\;t>0,
		\end{cases}
	\end{equation}
	where
	\begin{equation}\label{fhatf}
		f_i(u_i, v_i)=r_iu_i\left(1-\dfrac{u_i+v_i}{k_i}\right),\;\;\hat{f}_i(u_i, v_i)=r_iv_i\left(1-\dfrac{u_i+v_i}{k_i}\right).
	\end{equation}
	If $(\overline{\mathbf u}(\cdot,0),\underline{\mathbf v}(\cdot,0))\succeq (\underline{\mathbf u}(\cdot,0),\overline{\mathbf v}(\cdot,0))$, then $(\overline{\mathbf u}(\cdot,t),\underline{\mathbf v}(\cdot,t))\succeq (\underline{\mathbf u}(\cdot,t),\overline{\mathbf v}(\cdot,t))$ for any $t\in (0,T)$.
\end{theorem}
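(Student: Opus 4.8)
The plan is to reduce the statement to a nonnegativity assertion for the difference functions $U_i:=\overline u_i-\underline u_i$ and $V_i:=\overline v_i-\underline v_i$, and then to close the argument with a weighted $L^2$ estimate and Gr\"onwall's inequality, using the telescoping weights $\prod_{\ell\ge i}p_\ell$ and $\prod_{\ell\ge i}\hat p_\ell$ that already appear in the proof of Lemma \ref{lemma2.2} to cancel the interface contributions.

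First I would note that the desired conclusion $(\overline{\mathbf u}(\cdot,t),\underline{\mathbf v}(\cdot,t))\succeq(\underline{\mathbf u}(\cdot,t),\overline{\mathbf v}(\cdot,t))$ is exactly $U_i(\cdot,t),V_i(\cdot,t)\ge 0$ for all $i$. Subtracting the two differential inequalities in \eqref{compr} for the $u$-components gives $(U_i)_t-d_i(U_i)_{xx}\ge f_i(\overline u_i,\underline v_i)-f_i(\underline u_i,\overline v_i)$, and splitting the right-hand side as $[f_i(\overline u_i,\underline v_i)-f_i(\underline u_i,\underline v_i)]+[f_i(\underline u_i,\underline v_i)-f_i(\underline u_i,\overline v_i)]$ and applying the fundamental theorem of calculus yields $(U_i)_t-d_i(U_i)_{xx}-a_iU_i-c_iV_i\ge 0$, where $a_i$ is bounded and, using that $\partial_{v}f_i=-r_iu_i/k_i$ is independent of $v_i$ and nonpositive, $c_i=r_i\underline u_i/k_i\ge 0$. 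The same manipulation for the $v$-components gives $(V_i)_t-\hat d_i(V_i)_{xx}-\hat a_iV_i-\hat c_iU_i\ge 0$ with $\hat a_i$ bounded and $\hat c_i=r_i\underline v_i/k_i\ge 0$; thus $(U,V)$ solves a cooperative linear system of differential inequalities. Since both $(\overline{\mathbf u},\underline{\mathbf v})$ and $(\underline{\mathbf u},\overline{\mathbf v})$ obey \eqref{B2}, the $U_i$ inherit the interface conditions $U_{i+1}(x_i^+,t)=p_iU_i(x_i^-,t)$ and $d_{i+1}(U_{i+1})_x(x_i^+,t)=d_i(U_i)_x(x_i^-,t)$, and the $V_i$ the analogous ones with $\hat p_i,\hat d_i$; moreover \eqref{compr} gives the one-sided boundary inequalities $(U_1)_x(0^+,t)\le 0$ and $(U_n)_x(L^-,t)\ge 0$ (and likewise for $V$), while the hypothesis on the initial data gives $U_i(\cdot,0),V_i(\cdot,0)\ge 0$.

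It suffices to prove the nonnegativity on $\overline\Omega_i\times[0,T']$ for an arbitrary $T'<T$; on such an interval $\overline{\mathbf u},\underline{\mathbf u},\overline{\mathbf v},\underline{\mathbf v}$ and hence $a_i,\hat a_i,c_i,\hat c_i$ are bounded, so after replacing $U_i,V_i$ by $\tilde U_i:=e^{-Kt}U_i$, $\tilde V_i:=e^{-Kt}V_i$ with $K$ chosen large, the zeroth-order coefficients $a_i-K$ and $\hat a_i-K$ become negative. Writing $w^-:=\max\{-w,0\}$, I would multiply the $\tilde U_i$-inequality by $\theta_i(\tilde U_i)^-$ with $\theta_i:=\prod_{\ell=i}^np_\ell$ ($p_n:=1$), integrate over $\Omega_i$ and sum over $i$, and do the same for $\tilde V_i$ with weights $\hat\theta_i:=\prod_{\ell=i}^n\hat p_\ell$. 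By the Stampacchia chain rule for $w\mapsto w^-$ (valid since the functions are $C^{2,1}$), the time terms give $-\tfrac12\tfrac{d}{dt}\sum_i\theta_i\|(\tilde U_i)^-\|_{L^2(\Omega_i)}^2$, the second-order terms give a nonpositive Dirichlet-energy term plus boundary contributions, and the zeroth-order terms give a nonpositive term plus the coupling $-\sum_i\theta_ic_i\int_{\Omega_i}(\tilde U_i)^-\tilde V_i$. The boundary contributions at $x=0$ and $x=L$ have the right sign by the one-sided inequalities above, and the interior contributions at each $x_i$ coming from patch $i$ and from patch $i+1$ cancel exactly: with $(\tilde U_{i+1})^-(x_i^+)=p_i(\tilde U_i)^-(x_i^-)$, $d_{i+1}(\tilde U_{i+1})_x(x_i^+)=d_i(\tilde U_i)_x(x_i^-)$ and $\theta_i=p_i\theta_{i+1}$, the two terms are negatives of one another (this telescoping, the analogue of the computation in Lemma \ref{lemma2.2}, is what dictates the choice of weights). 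Discarding the nonpositive terms and estimating the coupling via $c_i(\tilde U_i)^-(\tilde V_i)^-\le\tfrac12 c_i\big(((\tilde U_i)^-)^2+((\tilde V_i)^-)^2\big)$ (and symmetrically for the $\tilde V_i$-terms), one obtains $E'(t)\le CE(t)$ for $E(t):=\sum_i\theta_i\|(\tilde U_i)^-(\cdot,t)\|_{L^2(\Omega_i)}^2+\sum_i\hat\theta_i\|(\tilde V_i)^-(\cdot,t)\|_{L^2(\Omega_i)}^2$, with $C$ depending only on the $L^\infty$-bounds of $c_i,\hat c_i$ and on the weights; since $E(0)=0$ and $E\ge 0$, Gr\"onwall forces $E\equiv 0$ on $[0,T']$, i.e. $U_i,V_i\ge 0$ there, and letting $T'\uparrow T$ finishes the proof.

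I expect the interface handling to be the main obstacle: a pointwise maximum-principle/Hopf argument is awkward because the densities of $U$ and $V$ jump across each $x_i$, so a boundary point lemma cannot be invoked there directly. The weighted integral identity avoids this entirely, since the jump factor $p_i$ (resp.\ $\hat p_i$) together with the balanced-flux condition is absorbed precisely by the ratio $\theta_i/\theta_{i+1}$ (resp.\ $\hat\theta_i/\hat\theta_{i+1}$), so that no interior boundary term survives; the remaining subtlety — that $a_i,\hat a_i$ need not have a sign — is handled by the exponential rescaling and by carrying out the estimate on each $[0,T']$ with $T'<T$.
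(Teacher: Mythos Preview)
Your argument is correct and takes a genuinely different route from the paper. The paper proceeds pointwise: it forms $\epsilon$-barriers $\theta_i:=(\overline u_i-\underline u_i)e^{-\mu t}+\epsilon(t+1)$ (and $\rho_i$ analogously), locates the first time one of them touches zero, rules out an interior touch directly from the differential inequality, and rules out a touch at $x=0$, $x=L$, or an interface $x_i$ via Hopf's boundary point lemma applied on each patch together with the flux-matching condition $d_{i+1}(\theta_{i+1})_x(x_i^+)=d_i(\theta_i)_x(x_i^-)$ (which the $\epsilon$-perturbation preserves). Your weighted $L^2$/Gr\"onwall approach instead absorbs the interfaces algebraically through the telescoping weights $\theta_i=\prod_{\ell\ge i}p_\ell$ and $\hat\theta_i=\prod_{\ell\ge i}\hat p_\ell$, so no boundary-point case analysis is needed at all. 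The pointwise method has the advantage that the strict version in Remark~\ref{strict} follows almost for free from the strong maximum principle once nonnegativity is in hand; your method's advantage is exactly what you anticipated --- the density jumps at $x_i$ are invisible to the weighted energy, and the argument would carry over under weaker regularity assumptions on the sub- and super-solutions. One small point worth making explicit when you write it up: because the two weight systems $\{\theta_i\}$ and $\{\hat\theta_i\}$ differ, closing $E'\le CE$ uses that $\max_i(\theta_i/\hat\theta_i)$ and $\max_i(\hat\theta_i/\theta_i)$ are finite; you have this implicitly in ``$C$ depending \ldots\ on the weights''.
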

\begin{proof}
	Fix any $T_0 \in(0, T)$, we denote
	\begin{equation*}
		M:=\max_{1\le i\le n} \left(\|\overline{u}_i\|_{\infty},\|\underline{u}_i\|_{\infty},\|\overline{v}_i\|_{\infty},\|\underline{v}_i\|_{\infty}\right),
	\end{equation*}
	where $\|w_i\|_{\infty}=\max_{(x,t)\in\overline{\Omega}_i\times[0,T_0]}\lvert w_i(x,t)\rvert$ for $w_i=\overline{u}_i,\;\underline{u}_i,\;\overline{v}_i,\;\underline{u}_i$,
	and let
	\begin{equation*}
		\mu:=\max_{1\le i\le n}\left\{\max_{[0,M]\times[0,M]}\left(\Big\lvert\dfrac{\partial f_i}{\partial u_i}\Big\rvert+\Big\lvert\dfrac{\partial f_i}{\partial v_i}\Big\rvert+\Big\lvert\dfrac{\partial \hat{f}_i}{\partial u_i}\Big\rvert+\Big\lvert\dfrac{\partial \hat{f}_i}{\partial v_i}\Big\rvert\right)\right\}.
	\end{equation*}
	Inspired by \cite{44-Zhang-2024}, we define, for $i=1,\cdots,n$,
	\begin{equation*}
		\begin{split}
			&\theta_i(x, t):=(\overline{u}_i(x, t)-\underline{u}_i(x, t)) e^{-\mu t}+\epsilon(t+1),\\
			&\rho_i(x, t):=(\overline{v}_i(x, t)-\underline{v}_i(x, t)) e^{-\mu t}+\epsilon(t+1). \\
		\end{split}
	\end{equation*}
	Then it  follows from the mean value theorem that there exist
	\begin{equation*}
		\begin{aligned}
			&a_i(x,t):=\dfrac{\partial f_i}{\partial u_i}\left(\eta_i(x,t),\zeta_i(x,t)\right), \;\;b_i(x,t):=\dfrac{\partial f_i}{\partial v_i}\left(\eta_i(x,t),\zeta_i(x,t)\right)\le0,\\
			&\hat{a}_i(x,t):=\dfrac{\partial \hat{f}_i}{\partial u_i}(\hat{\eta}_i(x,t),\hat{\zeta}_i(x,t))\le0,\;\; \hat{b}_i(x,t):=\dfrac{\partial \hat{f}_i}{\partial v_i}\left(\hat{\eta}_i(x,t),\hat{\eta}_i(x,t)\right),\\
		\end{aligned}
	\end{equation*}
	where
	$0\le \eta_i(x,t),\;\zeta_i(x,t),\;\hat{\eta}_i(x,t),\hat{\zeta}_i(x,t)\le M$ for $(x,t)\in\Omega_i\times(0,T_0]$,
	such that
	\begin{equation}\label{n1n2}
		\begin{aligned} \mathcal{L}_i\left({\theta}_i,\rho_i\right):=&({\theta}_i)_t-d_i({\theta}_i)_{xx}+\left(\mu-a_i(x,t)\right){\theta}_i+b_i(x,t){\rho}_i \\
			=&e^{-\mu t}\left[(\overline{u}_i-\underline{u}_i)_t -d_i(\overline{u}_i-\underline{u}_i)_{xx}-a_i(x,t)(\overline{u}_i-\underline{u}_i)+b_i(x,t)\left(\overline{v}_i-\underline{v}_i\right)\right]\\
			&+\epsilon+\left(\mu-a_i(x,t)+b_i(x,t)\right)\epsilon(t+1)>0\;\;\text{in}\;\; \Omega_i\times\left(0, T_0\right],\\ \hat{\mathcal{L}}_i\left({\theta}_i,\rho_i\right):=&({\rho}_i)_t-\hat{d}_i({\rho}_i)_{xx}+\left(\mu-\hat{b}_i(x,t)\right){\rho}_i+\hat{a}_i(x,t){\theta}_i \\
			=&e^{-\mu t}\left((\overline{v}_i-\underline{v}_i)_t -\hat{d}_i(\overline{v}_i-\underline{v}_i)_{xx}-\hat{b}_i(x,t)(\overline{v}_i-\underline{v}_i)+\hat{a}_i(x,t)\left(\overline{u}_i-\underline{u}_i\right)\right)\\
			&+\epsilon+\left(\mu-\hat{b}_i(x,t)+\hat{a}_i(x,t)\right)\epsilon(t+1)>0\;\;\text{in}\;\; \Omega_i\times\left(0, T_0\right].\\
		\end{aligned}
	\end{equation}
	Moreover,
	\begin{subequations}\label{apwthe}
		\begin{align}
			&d_i({\theta}_i)_{x}(x_i^{-},t)= d_{i+1}({\theta}_{i+1})_{x}(x_i^{+},t),\;\hat{d}_i({\rho}_i)_{x}(x_i^{-},t)=\hat{d}_{i+1}({\rho}_{i+1})_{x}(x_i^{+},t),\label{apwthe-a}\\
			&({\theta}_1)_x(0^+,t), ({\rho}_1)_x(0^+,t)\le0,\;	({\theta}_{n})_x(L^-,t), ({\rho}_{n})_x(L^-,t)\ge0,\label{apwthe-b}\\
			&{\theta}_i(\cdot,0),\;{\rho}_i(\cdot,0)> 0\;\;\text{in}\;\;\overline{\Omega}_i.\label{apwthe-c}
		\end{align}
	\end{subequations}

We first prove that ${\theta}_i,{\rho}_i>0$ in $(x,t) \in\overline{\Omega}_i\times[0, T_0]$ for $i=1,\cdots,n$.
Suppose, for the sake of contradiction, that this is not true. Then, by \eqref{apwthe-c}, at least one of the following quantities is well-defined and lies in the interval $(0, T_0]$:
\begin{equation*}
\begin{split}
&t_1:=\inf\{t\in[0,T_0]: \exists 1\le i_0\le n\;\text{and}\;y_0\in\overline{\Omega}_{i_0}\;\;\text{s.t.}\;\;{\theta}_{i_0}(y_0, t)=0\},\\
&t_2:=\inf\{t\in[0,T_0]: \exists 1\le i_0\le n\;\text{and}\;y_0\in\overline{\Omega}_{i_0}\;\;\text{s.t.}\;\;{\rho}_{i_0}(y_0, t)=0\}.
\end{split}
\end{equation*}
Without loss of generality, we assume that both $t_1$ and $t_2$ exist satisfying $t_1\le t_2$. By the definition of $t_1$ and $t_2$, we have
\begin{equation}\label{initial}
\begin{split}
&{\theta}_{i_0}(x, t)>0\;\;\text{for}\;\;x\in\Omega_i,\; t\in[0,t_1),\\
&{\rho}_{i_0}(x, t)\ge0\;\;\text{for}\;\;x\in\Omega_i,\; t\in[0,t_1].
\end{split}
\end{equation}
Now we claim that
\begin{equation}\label{suptheta}
{\theta}_{i_0}(x, t_1)>0\;\;\text{for}\;\;x\in\Omega_{i_0}.
\end{equation}
If it is not true, then there exists $y_0\in \Omega_{i_0}$ such that ${\theta}_{i_0}(y_0, t_1)=0$. Then
	$({\theta}_{i_0})_t|_{(y_0,t_1)} \le 0$, $({\theta}_{i_0})_x|_{(y_0,t_1)}=0$ and $({\theta}_{i_0})_{x x}|_{(y_0,t_1)}\ge 0$.
	Thus,
	\begin{equation*}
		\begin{aligned} \mathcal{L}_{i_0}\left({\theta}_{i_0},{\rho}_{i_0}\right)|_{(y_0,t_1)}=&({\theta}_{i_0})_t|_{(y_0,t_1)}-d_{i_0}({\theta}_{i_0})_{xx}|_{(y_0,t_1)}\\
			&+\left(\mu-a_{i_0}(y_0,t_1)\right){\theta}_{i_0}(y_0,t_1)+b_{i_0}(y_0,t_1){\rho}_{i_0}(y_0,t_1)\le 0,
		\end{aligned}
	\end{equation*}
	which contradicts \eqref{n1n2}. Therefore, the claim is true and \eqref{suptheta} holds.
Then, by the definition of $t_1$ again, we see that ${\theta}_{i_0}(x_{i_0-1}^+, t_1)=0$ or ${\theta}_{i_0}(x_{i_0}^-, t_1)=0$.
 Now we obtain a contradiction for the case ${\theta}_{i_0}(x_{i_0}^-, t_1)=0$, and the other case can be treated similarly.
 If $i_0=n$, it follows from \eqref{initial}-\eqref{suptheta} and the Hopf's lemma that $({\theta}_{i_0})_x(x_{i_0}^{-},t_0)<0$, which contradicts \eqref{apwthe-b}. If $i_0<n$, by \eqref{initial}-\eqref{suptheta} and the Hopf's lemma again, we have $({\theta}_{i_0})_x(x_{i_0}^{-},t_0)<0$ and $({\theta}_{i_0+1})_x(x_{i_0}^{+},t_0)>0$, which contradicts \eqref{apwthe-a}.
	
Thus, ${\theta}_i,{\rho}_i>0$ in $\overline{\Omega}_i\times[0, T_0]$ for $i=1,\cdots,n$.
	Since $\epsilon>0$  and $T_0 \in(0, T)$ are arbitrary, we conclude that $\overline{u}_i\ge\underline{u}_i$ and $\underline{v}_i\le\overline{v}_i$ in $\overline{\Omega}_i\times(0, T)$ for $i=1,\cdots,n$. This completes the proof.
\end{proof}
\begin{remark}\label{strict}
	We remark that if condition $(\overline{\mathbf u}(\cdot,0),\underline{\mathbf v}(\cdot,0))\succeq(\underline{\mathbf u}(\cdot,0),\overline{\mathbf v}(\cdot,0))$ in Theorem \ref{appcp} is replaced by $(\overline{\mathbf u}(\cdot,0),\underline{\mathbf v}(\cdot,0))\succeq (\not\equiv)(\underline{\mathbf u}(\cdot,0),\overline{\mathbf v}(\cdot,0))$, one can obtain that $\overline{\mathbf u}(x,t)\gg\underline{\mathbf u}(x,t)$ and $\overline{\mathbf v}(x,t)\gg \underline{\mathbf v}(x,t)$ for all $x\in[0,L]$ and $t\in (0,T]$. The proof is also based on the comparison principle for classical reaction-diffusion equations, and here we omit the proof for simplicity.
\end{remark}
By Theorem \ref{appcp} and the definitions of sub-equilibrium and super-equilibrium
(see \cite[Chapter I]{41-Hess-1991}), we can obtain the following result.
\begin{proposition}\label{supersub}
	Let  $\overline{\mathbf{w}}, \overline{\mathbf{z}}, \underline{\mathbf{w}}, \underline{\mathbf{z}}\in X^+$ with
	\begin{equation}\label{c2app}
		X^+:=\{\mathbf u=(u_1,\cdots,u_n):u_i\in C^2(\overline{\Omega}_i)\;\text{and}\;u_i\ge0\;\text{in}\;\overline{\Omega}_i\;\;\text{for all}\;\;i=1,\cdots,n\}.
	\end{equation}
	Assume that $(\overline{\mathbf{w}}, \underline{\mathbf{z}})$ and $(\underline{\mathbf{w}}, \overline{\mathbf{z}})$  satisfy \eqref{m4-c}-\eqref{m4-d}, and for $i=1,\cdots,n$,
	\begin{equation*}
		\begin{cases}
			-d_i( \overline{w}_i)_{xx}-f_i(\overline{w}_i, \underline{z}_i)\ge0\ge	-d_i(\underline{w}_i)_{xx}-f_i(\underline{w}_i, \overline{z}_i),\;\; x\in \Omega_i,\\
			-\hat d_i( \overline{z}_i)_{xx}-\hat{f}_i(\underline{w}_i,\overline{z}_i)\ge0\ge-\hat d_i( \underline{z}_i)_{xx}-\hat{f}_i(\overline{w}_i,\underline{z}_i),\; \;x\in \Omega_i,\\ (\overline{w}_1)_x(0^{+})\le0\le(\underline{w}_1)_x(0^{+}),\;\;(\overline{w}_{n})_x(L^{-})\ge0\ge(\underline{w}_{n})_x(L^{-}),\\ (\overline{z}_1)_x(0^{+})\le0\le(\underline{z}_1)_x(0^{+}),\;\;(\overline{z}_{n})_x(L^{-})\ge0\ge(\underline{z}_{n})_x(L^{-}).
		\end{cases}
	\end{equation*}
	Then $(\overline{\mathbf{w}}, \underline{\mathbf{z}})$ (resp. $(\underline{\mathbf{w}}, \overline{\mathbf{z}})$) is a super-equilibrium (resp. sub-equilibrium) of system \eqref{m1}.
\end{proposition}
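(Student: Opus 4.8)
The plan is to reduce the claim to a single application of the comparison principle in Theorem~\ref{appcp}, after unwinding the dynamical meaning of super- and sub-equilibrium. Recall that, for the order-preserving semiflow $T_t$ generated by \eqref{m1} with respect to the order \eqref{order}, a point is a \emph{super-equilibrium} (resp.\ \emph{sub-equilibrium}) precisely when its forward orbit decreases (resp.\ increases) in this order, i.e.\ $T_t(\overline{\mathbf w},\underline{\mathbf z})\preceq(\overline{\mathbf w},\underline{\mathbf z})$ (resp.\ $T_t(\underline{\mathbf w},\overline{\mathbf z})\succeq(\underline{\mathbf w},\overline{\mathbf z})$) for all $t>0$; see \cite[Chapter~I]{41-Hess-1991}. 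So it suffices to compare the stationary data with the genuine solution of \eqref{m1} issuing from it.

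For the super-equilibrium part, I would let $(\mathbf u(\cdot,t),\mathbf v(\cdot,t))$ be the solution of \eqref{m1} with initial value $(\overline{\mathbf w},\underline{\mathbf z})$ (existence and regularity being supplied by the abstract framework of Appendix~\ref{AppendixC}), and apply Theorem~\ref{appcp} with the identifications $\overline{\mathbf u}=\overline{\mathbf w}$, $\underline{\mathbf v}=\underline{\mathbf z}$ (both regarded as $t$-independent), $\underline{\mathbf u}=\mathbf u(\cdot,t)$, $\overline{\mathbf v}=\mathbf v(\cdot,t)$. Then each hypothesis of Theorem~\ref{appcp} matches the given data: the interface conditions \eqref{B2} hold for $(\overline{\mathbf w},\underline{\mathbf z})$ by assumption (they are \eqref{m4-c}--\eqref{m4-d}) and for $(\mathbf u,\mathbf v)$ because it solves \eqref{m1}; since $\partial_t\overline w_i=\partial_t\underline z_i=0$, the parabolic inequalities \eqref{compr} become exactly the assumed elliptic inequalities on $\overline{\mathbf w}$, $\underline{\mathbf z}$ on the one side and the vanishing PDE residuals of $(\mathbf u,\mathbf v)$ on the other; the Neumann-type inequalities at $x=0,L$ in \eqref{compr} follow from the assumed one-sided sign conditions on $\overline{\mathbf w},\underline{\mathbf z}$ together with the no-flux conditions satisfied by $(\mathbf u,\mathbf v)$; and the initial ordering is the trivial equality $(\overline{\mathbf w},\underline{\mathbf z})\succeq(\overline{\mathbf w},\underline{\mathbf z})$. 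Theorem~\ref{appcp} then gives $(\overline{\mathbf w},\underline{\mathbf z})\succeq(\mathbf u(\cdot,t),\mathbf v(\cdot,t))=T_t(\overline{\mathbf w},\underline{\mathbf z})$ for all $t\in(0,T)$, which is exactly the super-equilibrium property.

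The sub-equilibrium part is symmetric: with $(\mathbf u,\mathbf v)$ now the solution of \eqref{m1} starting from $(\underline{\mathbf w},\overline{\mathbf z})$, I would apply Theorem~\ref{appcp} with $\overline{\mathbf u}=\mathbf u(\cdot,t)$, $\underline{\mathbf v}=\mathbf v(\cdot,t)$, $\underline{\mathbf u}=\underline{\mathbf w}$, $\overline{\mathbf v}=\overline{\mathbf z}$; the assumed inequalities on $\underline{\mathbf w},\overline{\mathbf z}$ are precisely what \eqref{compr} demands in this configuration, and the conclusion $(\mathbf u(\cdot,t),\mathbf v(\cdot,t))\succeq(\underline{\mathbf w},\overline{\mathbf z})$ reads $T_t(\underline{\mathbf w},\overline{\mathbf z})\succeq(\underline{\mathbf w},\overline{\mathbf z})$.

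Honestly there is no serious obstacle here; the work is entirely bookkeeping, namely matching the four slots $\overline{\mathbf u},\underline{\mathbf v},\underline{\mathbf u},\overline{\mathbf v}$ of Theorem~\ref{appcp} to the correct objects in the correct order and checking that the time-derivative terms drop out for the stationary data. The only two points worth a remark are (a) that a classical solution of \eqref{m1} with the given $C^2$ initial data does exist on $(0,T)$, for which one cites the well-posedness in Appendix~\ref{AppendixC}, and (b) that the definition of super-/sub-equilibrium used from \cite[Chapter~I]{41-Hess-1991} is the dynamical one phrased through $T_t$, so that the comparison inequality indeed translates into monotonicity of the orbit.
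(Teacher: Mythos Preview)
Your proposal is correct and is exactly the argument the paper has in mind: the paper itself does not spell out a proof but simply states that the result follows from Theorem~\ref{appcp} together with the definitions of super- and sub-equilibrium from \cite[Chapter~I]{41-Hess-1991}, and your write-up is precisely that verification. The bookkeeping you carry out (matching the four slots of Theorem~\ref{appcp}, noting that the time derivatives vanish for the stationary data, and invoking Appendix~\ref{AppendixC} for well-posedness) is the intended content.
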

Then, by \cite[Lemma 1.1]{41-Hess-1991}, we have the following result.
\begin{theorem}\label{pss}
	Let $(\overline{\mathbf{w}}, \underline{\mathbf{z}})$  be a super-equilibrium of system \eqref{m1}, and  let $(\underline{\mathbf{w}}, \overline{\mathbf{z}})$ be a sub-equilibrium of system \eqref{m1} with
	$\overline{\mathbf{w}}, \overline{\mathbf{z}}, \underline{\mathbf{w}}, \underline{\mathbf{z}}\in X^+$ (defined in \eqref{c2app}). If $(\overline{\mathbf{w}}, \underline{\mathbf{z}})\succeq (\underline{\mathbf{w}}, \overline{\mathbf{z}})$, then system \eqref{m1} admits a steady state $(\mathbf u,\mathbf v)$ satisfying $(\overline{\mathbf{w}}, \underline{\mathbf{z}})\succeq (\mathbf u,\mathbf v)\succeq(\underline{\mathbf{w}}, \overline{\mathbf{z}})$.
\end{theorem}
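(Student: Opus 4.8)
The plan is to obtain Theorem~\ref{pss} as a direct consequence of the abstract theory of monotone semiflows, namely \cite[Lemma~1.1]{41-Hess-1991}, once the functional-analytic setting of Appendix~\ref{AppendixC} and the comparison principle of Theorem~\ref{appcp} are combined. First I would recall that, after rewriting model~\eqref{m1} as the abstract evolution equation~\eqref{abstract}, the associated semiflow $T_t$ acts on the fractional power space $X^{\alpha}_u\times X^{\alpha}_v$, is order-preserving with respect to the competitive order ``$\succeq$'' of~\eqref{order} (this is precisely the content of Theorem~\ref{appcp}), and is order-compact for every $t>0$ (Theorem~\ref{C4}). Together with the fact that the order cone determined by~\eqref{order} is normal, this places us exactly in the framework required to apply \cite[Lemma~1.1]{41-Hess-1991}.

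Next I would exploit the defining properties of the sub- and super-equilibria. Since $(\overline{\mathbf w},\underline{\mathbf z})$ is a super-equilibrium, Proposition~\ref{supersub} shows it satisfies the elliptic differential inequalities, hence, viewed as a time-independent function, it is a supersolution of the parabolic problem in the sense of~\eqref{compr}; Theorem~\ref{appcp} then yields $T_t(\overline{\mathbf w},\underline{\mathbf z})\preceq(\overline{\mathbf w},\underline{\mathbf z})$ for all $t>0$, and combining this with the semigroup identity $T_{t+s}=T_s\circ T_t$ and with order-preservation shows that $t\mapsto T_t(\overline{\mathbf w},\underline{\mathbf z})$ is non-increasing in the order ``$\succeq$''. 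Symmetrically, $t\mapsto T_t(\underline{\mathbf w},\overline{\mathbf z})$ is non-decreasing. Moreover, from the hypothesis $(\overline{\mathbf w},\underline{\mathbf z})\succeq(\underline{\mathbf w},\overline{\mathbf z})$ and order-preservation of $T_t$ we get $T_t(\overline{\mathbf w},\underline{\mathbf z})\succeq T_t(\underline{\mathbf w},\overline{\mathbf z})\succeq(\underline{\mathbf w},\overline{\mathbf z})$ for all $t\ge0$, so the decreasing orbit started at $(\overline{\mathbf w},\underline{\mathbf z})$ remains in the order interval $[(\underline{\mathbf w},\overline{\mathbf z}),(\overline{\mathbf w},\underline{\mathbf z})]$.

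Then I would pass to the limit: the orbit $\{T_t(\overline{\mathbf w},\underline{\mathbf z})\}_{t\ge0}$ is monotone and order-bounded, and by order-compactness it is precompact, so it converges as $t\to\infty$ to some $(\mathbf u,\mathbf v)$ lying in $[(\underline{\mathbf w},\overline{\mathbf z}),(\overline{\mathbf w},\underline{\mathbf z})]$; continuity of $T_s$ together with $T_sT_t=T_{s+t}$ forces $(\mathbf u,\mathbf v)$ to be a fixed point of every $T_s$, i.e., a steady state of~\eqref{m1}. By construction $(\overline{\mathbf w},\underline{\mathbf z})\succeq(\mathbf u,\mathbf v)\succeq(\underline{\mathbf w},\overline{\mathbf z})$, which is the assertion. (Running the increasing orbit from $(\underline{\mathbf w},\overline{\mathbf z})$ would produce a possibly different steady state with the same sandwiching; either one suffices here, and in fact \cite[Lemma~1.1]{41-Hess-1991} delivers minimal and maximal equilibria in the interval.)

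The main obstacle is not a computation but the bookkeeping needed to legitimately invoke \cite[Lemma~1.1]{41-Hess-1991}: one must verify that the sub- and super-equilibria, prescribed in the $C^2$-type space $X^+$ of~\eqref{c2app}, are admissible initial data for the semiflow on the fractional power space, that the monotonicity of the trajectory follows globally in $t$ (not merely for small $t$) from the supersolution property via Theorem~\ref{appcp}, and that the limit of the monotone orbit is genuinely an equilibrium of~\eqref{abstract}. All of these are standard within the theory of monotone dynamical systems \cite{41-Hess-1991,48-Smith-1995}, so the proof ultimately reduces to checking these hypotheses and quoting \cite[Lemma~1.1]{41-Hess-1991}.
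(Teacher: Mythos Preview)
Your proposal is correct and takes essentially the same approach as the paper: the paper does not give a proof at all but simply states that the result follows from \cite[Lemma~1.1]{41-Hess-1991}, which is precisely the abstract monotone-semiflow result you invoke. Your write-up is more detailed than the paper's one-line citation, spelling out the monotonicity of the orbits and the passage to the limit, but the underlying mechanism is identical.
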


We end this section with a comparison principle for an  elliptic equation.
\begin{theorem}\label{parabolic}
	Assume that, for each $i=1,\cdots,n$, the function $c_i(x)$ is positive and bounded. If $(u_1,\cdots,u_n)\in C^{2}( \overline{\Omega}_1)\times \cdots\times C^{2}( \overline{\Omega}_n)$ satisfies
	\begin{subequations}\label{bm3}
	\begin{empheq}[left=\empheqlbrace]{align}
		&\ds -d_i (u_i)_{xx}+c_i(x) u_i= f_i(x),\;\; x\in \Omega_i,\;\; i=1,2,\cdots,n, \label{bm3-a} \\
		&u_{i+1}(x_i^{+})= p_iu_i(x_i^{-}), \quad d_{i+1}(u_{i+1})_{x}(x_i^{+})= d_i (u_{i})_{x}(x_i^{-}), \label{bm3-b} \\
		&(u_{1})_{x}(0^{+})\le0\le(u_{n})_{x}(L^{-}),\label{bm3-c}
	\end{empheq}
\end{subequations}
where $f_i\ge 0$ in $\overline{\Omega}_i$ for all $i=1,\cdots,n$, and there exists $1\le i^*\le n$ such that $f_{i^*}\not\equiv 0$,
then $u_i>0$ in $\overline{\Omega}_i$ for all $i=1,\cdots,n$.
\end{theorem}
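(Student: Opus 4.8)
The strategy is to reduce \eqref{bm3} to a scalar transmission problem with a \emph{continuous} unknown, prove nonnegativity by a maximum-principle argument, and then upgrade to strict positivity using the strong maximum principle together with an ordinary differential equation argument that propagates a zero of the solution across the interfaces. Concretely, I would first apply the change of variables used in the proof of Theorem~\ref{apt1}: with $\xi=\xi(x)$ and $w_i=u_i/\prod_{j<i}p_j$, problem \eqref{bm3} becomes an equivalent problem for a single function $w$ that is continuous on $[0,\xi_n]$, of class $C^2$ on each subinterval $\widetilde\Omega_i=(\xi_{i-1},\xi_i)$, and satisfies $-D_i w_{\xi\xi}+c_i w=\widetilde f_i$ on $\widetilde\Omega_i$ with $D_i,c_i>0$, together with the transmission conditions $w(\xi_i^+)=w(\xi_i^-)$, $D_iw_\xi(\xi_i^-)=D_{i+1}w_\xi(\xi_i^+)$ and the boundary inequalities $w_\xi(\xi_0^+)\le0\le w_\xi(\xi_n^-)$; here $\widetilde f_i\ge0$, $\widetilde f_{i^*}\not\equiv0$, and $u_i>0$ on $\overline\Omega_i$ for all $i$ is equivalent to $w>0$ on $[0,\xi_n]$.

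Second I would show $w\ge0$. If not, let $m:=\min_{[0,\xi_n]}w<0$, attained at $\zeta_0$. If $\zeta_0$ lies interior to a patch $\widetilde\Omega_i$, then $w_{\xi\xi}(\zeta_0)\ge0$, while the equation gives $w_{\xi\xi}(\zeta_0)=(c_i(\zeta_0)m-\widetilde f_i(\zeta_0))/D_i(\zeta_0)<0$ since $m<0$, $c_i>0$, $\widetilde f_i\ge0$ --- a contradiction. If $\zeta_0=\xi_j$ is an interior interface, then minimality gives $w_\xi(\xi_j^-)\le0\le w_\xi(\xi_j^+)$, while $D_jw_\xi(\xi_j^-)=D_{j+1}w_\xi(\xi_j^+)$ and $D_j,D_{j+1}>0$ force $w_\xi(\xi_j^\pm)=0$; since $w<0$ just right of $\xi_j$, there $w_{\xi\xi}<0$, so $w_\xi$ strictly decreases from $0$ and $w$ drops below $m$ --- again a contradiction. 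The cases $\zeta_0\in\{\xi_0,\xi_n\}$ are the same, now invoking \eqref{bm3-c}. Hence $w\ge0$, i.e. $u_i\ge0$ on each $\overline\Omega_i$.

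Third I would show $w>0$. Suppose $w(\zeta_0)=0$; since $w\ge0$, $\zeta_0$ is a minimum. If $\zeta_0$ is interior to a patch $\widetilde\Omega_i$, then $w$ is a nonnegative supersolution of $-D_i\partial_{\xi\xi}+c_i$ with a zero interior minimum, so the strong maximum principle gives $w\equiv0$ on $\overline{\widetilde\Omega}_i$. If $\zeta_0$ is a boundary or interface point, combining minimality with \eqref{bm3-c} (resp. with the flux matching, exactly as in the second step) shows $w$ and $w_\xi$ both vanish at one endpoint of some patch $\widetilde\Omega_i$; applying the Green's-function (variation-of-parameters) representation to the zero-Cauchy-data solution of $-w_{\xi\xi}+(c_i/D_i)w=\widetilde f_i/D_i\ge0$ then shows $w<0$ somewhere in $\widetilde\Omega_i$ unless $\widetilde f_i\equiv0$ on $\widetilde\Omega_i$; the former contradicts $w\ge0$, while in the latter $w\equiv0$ on $\overline{\widetilde\Omega}_i$ by ODE uniqueness. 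In every case $w\equiv0$ on some $\overline{\widetilde\Omega}_i$, so $\widetilde f_i\equiv0$ there; then $w(\xi_i^-)=0$ and $D_iw_\xi(\xi_i^-)=0$ give $w(\xi_i^+)=w_\xi(\xi_i^+)=0$, and the same argument propagates $w\equiv0$ to $\overline{\widetilde\Omega}_{i+1}$, and likewise to $\overline{\widetilde\Omega}_{i-1}$. By induction $w\equiv0$ on $[0,\xi_n]$, hence $\widetilde f_i\equiv0$ for all $i$, contradicting $\widetilde f_{i^*}\not\equiv0$. Therefore $w>0$, i.e. $u_i>0$ on $\overline\Omega_i$ for every $i$.

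The main obstacle is that the transmission conditions \eqref{bm3-b} block a direct appeal to the classical scalar strong maximum principle and Hopf lemma. The change of variables removes the jump in the unknown, but one must still control $w_\xi$ at interfaces --- the point being that $D_iw_\xi$ is continuous with $D_i>0$, so the sign of $w_\xi$ is preserved there --- and, most delicately, propagate the vanishing of $w$ from one patch to the next in the presence of the inhomogeneity $\widetilde f_i$: this works precisely because the sign condition $\widetilde f_i\ge0$ forces the zero-Cauchy-data solution to dip below zero whenever $\widetilde f_i\not\equiv0$, which is exactly what the Green's-function representation makes explicit.
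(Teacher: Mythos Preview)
Your proof is correct, but it takes a genuinely different route from the paper's. The paper works directly with the discontinuous $u_i$: for nonnegativity it shows no interior point can realize a negative minimum (same second-derivative contradiction you use), then applies Hopf's lemma at whichever boundary or interface realizes the minimum to contradict either \eqref{bm3-c} or the flux matching in \eqref{bm3-b}. For strict positivity it argues in the opposite direction from you: it starts at the patch $i^*$ where $f_{i^*}\not\equiv0$, uses the strong maximum principle to get $u_{i^*}>0$ in $\Omega_{i^*}$, then uses Hopf's lemma plus the interface conditions to push positivity to the endpoints of $\overline{\Omega}_{i^*}$ and, by induction, to all neighboring patches.

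Your approach instead removes the jump in the unknown by the rescaling from Theorem~\ref{apt1}, so the interface conditions become continuity of $w$ together with continuity of $D_i w_\xi$; this lets you replace Hopf's lemma by a bare-hands argument (at a minimizing interface the sign of $w_\xi$ is preserved, hence zero, and then the equation forces $w$ to dip below $m$). Your positivity step is also different in spirit: rather than propagating positivity outward from $i^*$, you assume a zero of $w$ and propagate $w\equiv0$ across interfaces via ODE uniqueness with zero Cauchy data, the Green's-function sign showing that a nontrivial $\widetilde f_i\ge0$ would force $w<0$. Both arguments are sound; yours is a bit more self-contained (no appeal to Hopf), while the paper's is shorter once one is willing to quote the classical strong maximum principle and Hopf lemma on each patch.
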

	\begin{proof}
The proof follows the technique of  \cite[Proposition 3.3]{36-Maciel-2020}, which we generalize to the $n$-patch case.
		Let  $m:=\min_{1\le i \le n,\, x\in \overline{\Omega}_i}u_i(x)$. We first prove that $m\ge0$. Suppose, for contradiction, that $m<0$.
We claim that
\begin{equation}\label{interior}
u_i>m \text{  in  } \Omega_i \text{  for each  }i=1,\cdots,n.
\end{equation}
If this claim is false, then there exists $1\le i_0\le n$ and $y_0\in{\Omega}_{i_0}$ such that ${u}_{i_0}(y_0)=m<0$. Then $(u_{i_0})_{xx}(y_0) \geq 0$, which implies that
	\begin{equation*}
		-d_{i_0} (u_{i_0})_{xx}(y_0) + c_{i_0}(y_0) u_{i_0}(y_0) \leq c_{i_0}(y_0) u_{i_0}(y_0) < 0,
		\end{equation*}
contradicting \eqref{bm3-a}. Therefore, \eqref{interior} holds, and consequently, there exists $1\le i_1\le n $ such that
$u_{i_1}(x_{i_1-1}^-)=m$ or $u_{i_1}(x_{i_1}^+)=m$. Without loss of generality, we assume that $u_{i_1}(x_{i_1}^-)=m$.
If $i_1=n$, it follows from the Hopf's Lemma and \eqref{interior} that
$(u_n)_x(x_{n}^-)<0$, which contradicts \eqref{bm3-c}. If $1\le i_1<n$, by the Hopf's Lemma and \eqref{interior} again, we have $(u_{i_1})_x(x_{i_1}^-)<0$ and $(u_{i_1+1})_x(x_{i_1}^+)>0$, which contradicts \eqref{bm3-b}. Therefore, $m\ge0$.
		
Now we prove that $m>0$. First, we claim that
\begin{equation}\label{claims1}
u_{i^*}>0 \text{ in }\Omega_{i^*}.
 \end{equation}
 If this is false, the strong maximum principle implies that $u_{i^*}\equiv0$ in $\Omega_{i^*}$. This contradicts \eqref{bm3-a} since $f_{i^*}\not\equiv0$. Therefore, \eqref{claims1} holds.
 Next, we show that $u_{i^*}(x_{i^*}^-)>0$, and the proof for $u_{i^*}(x_{i^*-1}^+)>0$ is similar. Suppose to the contrary that $u_{i^*}(x_{i^*}^-)=0$.
If $i^*=n$, it follows from  the Hopf's Lemma and  \eqref{claims1} that
$(u_{i^*})_x(x_{i^*}^-)<0$, which  contradicts \eqref{bm3-c}. If $1\le i^*<n$, the Hopf's Lemma and \eqref{interior} again gives $(u_{i^*})_x(x_{i^*}^-)<0$.
Since $u_{i^*+1}\ge0$ in $\overline \Omega_{i^*+1}$, we have $(u_{i^*+1})_x(x_{i^*}^+)\ge0$, which contradicts \eqref{bm3-b}. Therefore, $u_{i^*}>0$ in $\overline \Omega_{i^*}$.
Finally, we show that  $u_{i}>0$ in $\overline \Omega_{i}$ for all $i\ne i^*$. Without loss of generality, we assume that $1\le i^*<n$. The strong maximum principle gives $u_{i^*+1}>0$ in $\Omega_{i^*+1}$. Using arguments similar to those in the proof of $u_{i^*}(x_{i^*}^-)>0$, we see that $u_{i^*+1}(x_{i^*+1}^-)>0$, and consequently, $u_{i^*+1}>0$ in $\overline \Omega_{i^*+1}$.
The result for all remaining $i$ follows by induction.
\end{proof}

\section{Set-up of the problem}\label{AppendixC}
	
Following \cite{44-Zhang-2024}, we define a classical solution to model \eqref{m1} as follows:
	\begin{definition}\label{defc1}
		For $T \in (0, +\infty]$, we say that a function $$(\mathbf u,\mathbf v)=(u_1,\cdots, u_n, v_1,\cdots,v_n):  [0,L]\times[0, T)  \rightarrow \mathbb{R}^n\times\mathbb{R}^n$$ is a classical solution of  model \eqref{m1} in $ [0,L]\times[0, T)$ if:
		\begin{enumerate}[itemsep=1.25pt, parsep=1.25pt, topsep=1.25pt]
			\item [(i)]  $u_i, v_i$ is of class $C^{2,1}( \overline{\Omega}_i\times (0, T) )\cap C( \overline{\Omega}_i\times [0, T) ) $ for each $i=1,\cdots,n$;
			\item [(ii)]  All identities in \eqref{m1} are satisfied pointwise for $0 < t < T$.
		\end{enumerate}
	\end{definition}
To show the existence of a classical solution of system \eqref{m1}, we first analyze the abstract evolution equation based on semigroup theory, inspired by \cite{44-Zhang-2024,36-Maciel-2020}.  Since their models differ slightly from ours, the associated function spaces and operators must be adapted accordingly. For completeness, we now detail these necessary modifications.
We begin by defining the function spaces
	\begin{equation}\label{spacex}
		X_{u} ,X_{v} := L^{2}(\Omega_1) \times \cdots \times L^{2}(\Omega_n) \;\; \text{and}\;\; X := X_{u} \times X_{v},
	\end{equation}
equipped with the following inner products:
\begin{equation*}
    \begin{aligned}
        &\langle \mathbf{u}, \hat{\mathbf{u}} \rangle_{X_{u}} = \sum_{i=1}^{n} \frac{1}{\prod_{j=1}^{i-1} p_j} \langle u_{i}, \hat{u}_{i} \rangle_{L^{2}(\Omega_i)} && \text{for } \mathbf{u}, \hat{\mathbf{u}} \in X_{u}, \\
        &\langle \mathbf{v}, \hat{\mathbf{v}} \rangle_{X_{v}} = \sum_{i=1}^{n} \frac{1}{\prod_{j=1}^{i-1} \hat{p}_j} \langle v_{i}, \hat{v}_{i} \rangle_{L^{2}}(\Omega_i) && \text{for } \mathbf{v}, \hat{\mathbf{v}}  \in X_{v}, \\
        &\langle \mathbf{w}, \mathbf{z} \rangle_{X} = \langle \mathbf{u}, \hat{\mathbf{u}} \rangle_{X_{u}} + \langle \mathbf{v}, \hat{\mathbf{v}} \rangle_{X_{v}} && \text{for } \mathbf{w}=(\mathbf{u}, \mathbf{v}),\ \mathbf{z}=(\hat{\mathbf{u}}, \hat{\mathbf{v}}) \in X.
    \end{aligned}
\end{equation*}
Theses inner products induce the norms $\| \cdot\|_{X_u}=\sqrt{\langle \cdot, \cdot \rangle_{X_u}}$, $\| \cdot\|_{X_v}=\sqrt{\langle \cdot, \cdot \rangle_{X_v}}$ and  $\| \cdot\|_{X}=\sqrt{\langle \cdot, \cdot \rangle_{X}}$. Equipped with these norms,  $X_u$, $X_v$ and $X$ become Hilbert spaces, respectively.
	
Then define the function spaces
	\begin{equation}\label{hu1}
	\begin{aligned}
		&\mathcal{H}^1_{u} :=\{\mathbf u\in H^{1}(\Omega_1) \times \cdots \times H^{1}(\Omega_n): u_{i+1}(x_i^{+})=p_i u_i(x_i^{-}),i=1,\cdots,n-1\},\\
		&\mathcal{H}^1_{v} :=\{\mathbf v\in H^{1}(\Omega_1) \times \cdots \times H^{1}(\Omega_n): v_{i+1}(x_i^{+})=\hat{p}_i v_i(x_i^{-}),i=1,\cdots,n-1\},\\
&\mathcal{H}^1 = \mathcal{H}^1_{u} \times 	\mathcal{H}^1_{v},
	\end{aligned}
	\end{equation}
equipped with the following norms:
\begin{equation*}
\begin{split}
	&\| \mathbf u \|_{\mathcal{H}_u^1}= \sum_{i=1}^{n}\dfrac{1}{\prod_{j=1}^{i-1}p_j}\| u_i \|_{H^{1}(\Omega_i)}\;\;\text{for}\;\;\mathbf{u}\in \mathcal{H}_u^1,\\
	&\| \mathbf v \|_{\mathcal{H}_v^1}= \sum_{i=1}^{n}\dfrac{1}{\prod_{j=1}^{i-1}\hat{p}_j}\| v_i \|_{H^{1}(\Omega_i)}\;\;\text{for}\;\;\mathbf{v}\in \mathcal{H}_v^1,\\
&\| \mathbf w \|_{\mathcal{H}^1}= \| \mathbf u\|_{\mathcal{H}_u^1}+\| \mathbf v \|_{\mathcal{H}_v^1}\;\;\text{for}\;\;\mathbf{w}=(\mathbf{u}, \mathbf{v})\in \mathcal{H}^1.\\
\end{split}
	\end{equation*}
By the Sobolev embedding theorem, function spaces $\mathcal{H}^1_{u}$ and $\mathcal{H}^1_{v}$ are closed subspaces of the product space $H^{1}(\Omega_1) \times \cdots \times H^{1}(\Omega_n)$, which implies that $\mathcal{H}^1_{u}$, $\mathcal{H}^1_{v}$ and  $\mathcal{H}^1$ are Banach spaces.

We further define, for any integer $k\ge2$, the function spaces
	\begin{equation}\label{spaceh2}
		\begin{aligned}
			&\mathcal{H}^k_{u},\, \mathcal{H}^k_{v} :=  H^{k}(\Omega_1) \times \cdots \times H^{k}(\Omega_n) \;\;\text{and}\;\; 	\mathcal{H}^k := 	\mathcal{H}^k_{u} \times 	\mathcal{H}^k_{v}
		\end{aligned}
	\end{equation}
	equipped with the following norms, respectively:
	\begin{equation*}
		\begin{split}
			&\| \mathbf u \|_{\mathcal{H}_u^k}= \sum_{i=1}^{n}\dfrac{1}{\prod_{j=1}^{i-1}p_j}\| u_i \|_{H^{k}(\Omega_i)}\;\;\text{for}\;\;\mathbf{u}\in \mathcal{H}_u^k,\\
			&\| \mathbf v \|_{\mathcal{H}_v^k}= \sum_{i=1}^{n}\dfrac{1}{\prod_{j=1}^{i-1}\hat{p}_j}\| v_i \|_{H^{k}(\Omega_i)}\;\;\text{for}\;\;\mathbf{v}\in \mathcal{H}_v^k,\\
			&\| \mathbf w \|_{\mathcal{H}^k}= \| \mathbf u\|_{\mathcal{H}_u^k}+\| \mathbf v \|_{\mathcal{H}_v^k}\;\;\text{for}\;\;\mathbf{w}=(\mathbf{u}, \mathbf{v})\in \mathcal{H}^k.\\
		\end{split}
	\end{equation*}
Clearly, function spaces $\mathcal{H}^k_{u}$, $\mathcal{H}^k_{v}$ and  $\mathcal{H}^k$ are Banach spaces for all $k\ge2$.

Finally, define the function space
	\begin{equation}\label{spaceg}
		G=\{ \mathbf u \in C^{1}(\overline{\Omega}_1) \times \cdots \times C^{1}(\overline{\Omega}_n): u_{i+1}(x_i^{+})=p_i u_i(x_i^{-}),i=1,\cdots,n-1\},
	\end{equation}
	equipped with the norm
	\begin{equation*}
		\| \mathbf u\|_{G}= \sum_{i=1}^{n}\dfrac{1}{\prod_{j=1}^{i-1}p_j}\| u_i \|_{C^{1}(\overline{\Omega}_i)}.
	\end{equation*}
Clearly, $G$ is a Banach space.

We now define an operator $\mathcal{A}$ on function space $X$ as follows:
		\begin{equation}\label{mathca}
			\mathcal{A}\mathbf w = \left
				(\mathcal{A}_u \mathbf u,
				\mathcal{A}_v \mathbf v
\right)
	\end{equation}
	where
	\begin{equation}\label{AuAv}
	\begin{aligned}
		\mathcal{A}_u \mathbf u &:= (-d_1(u_1)_{xx}+u_1, \cdots, -d_n(u_n)_{xx}+u_n),\\
		\mathcal{A}_v \mathbf v&:= (-\hat{d}_1(v_1)_{xx}+v_1, \cdots, -\hat{d}_n(v_n)_{xx}+v_n).
	\end{aligned}
	\end{equation}
The domain of $\mathcal{A}$ is given by:
	\begin{equation}\label{spaceda}
		\begin{aligned}
			\mathcal{D}(\mathcal{A}):=\{&\mathbf w=(\mathbf u,\mathbf v)\in \mathcal{H}^2\cap\mathcal{H}^1:( u_1)_x(0^{+})=(u_n)_x(L^{-})=( v_1)_x(0^{+})=( v_n)_x(L^{-})=0,  \\
			&d_{i+1}( u_{i+1})_{x}(x_i^{+})=d_i ( u_{i})_{x}(x_i^{-}),\;\hat d_{i+1}( v_{i+1})_{x}(x_i^{+})=\hat d_i ( v_{i})_{x}(x_i^{-})\}.
		\end{aligned}
	\end{equation}
By the Sobolev embedding theorem again, $\mathcal{D}(\mathcal{A})$ is a closed subspace of $\mathcal{H}^2$ and thus a Banach space.

Then we formulate model \eqref{m1} as the following abstract evolution equation:
	\begin{equation}\label{abstract}
		\dfrac{{\rm d}}{{\rm d}t}\mathbf w+\mathcal{A}\mathbf w=F(\mathbf w),
	\end{equation}
	where $\mathbf w=(\mathbf u,\mathbf v)=(u_1,\cdots,u_n, v_1,\cdots, v_n)\in X$ and
	\begin{equation}\label{auxic9}
	\begin{aligned}
		&F(\mathbf w) \\
		:=& \left( f_1(u_1, v_1) + u_1,\ \cdots,\ f_n(u_n, v_n) + u_n,\ \hat{f}_1(u_1, v_1) + v_1,\ \cdots,\ \hat{f}_n(u_n, v_n) + v_n \right)
	\end{aligned}
	\end{equation}
	with $f_i$, $\hat{f}_i$ be defined in \eqref{fhatf} for $i=1,\cdots,n$.

We now show that $-\mathcal{A}$ generates an analytic semigroup on $X$.
	\begin{lemma}\label{semigroup}
		The linear operator $ \mathcal{A} : \mathcal{D}(\mathcal{A}) \subset X \to X $ is symmetric maximal monotone, and $-\mathcal{A}$ is the infinitesimal generator of an analytic semigroup on $X$, where $X$, $\mathcal A$ and $\mathcal{D}(\mathcal{A})$ are defined in \eqref{spacex}, \eqref{mathca} and \eqref{spaceda}, respectively. Moreover,
$\mathcal{A}$ has a compact resolvent.
	\end{lemma}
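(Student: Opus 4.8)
The plan is to exploit the block-diagonal structure $\mathcal{A} = \mathrm{diag}(\mathcal{A}_u, \mathcal{A}_v)$ (see \eqref{mathca}--\eqref{AuAv}) and argue for $\mathcal{A}_u$ on $X_u$; the operator $\mathcal{A}_v$ on $X_v$ is handled identically after replacing $p_i$ by $\hat p_i$ and $d_i$ by $\hat d_i$. The structural fact I would use throughout is that the weights $w_i := 1/\prod_{j=1}^{i-1}p_j$ in the inner product on $X_u$ are tuned precisely so that, under integration by parts, the interface contributions collapse to the transmission conditions defining $\mathcal{D}(\mathcal{A})$.

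\textbf{Symmetry and positivity.} For $\mathbf{u}, \hat{\mathbf{u}} \in \mathcal{D}(\mathcal{A})$ I would integrate by parts on each $\Omega_i = (x_{i-1}, x_i)$. The endpoint terms at $0$ and $L$ vanish by the Neumann conditions built into \eqref{spaceda}. At an interface $x_i$, patch $i$ contributes $-w_i d_i (u_i)_x(x_i^-)\hat u_i(x_i^-)$ while patch $i+1$ contributes $w_{i+1} d_{i+1}(u_{i+1})_x(x_i^+)\hat u_{i+1}(x_i^+)$; using $w_{i+1} = w_i/p_i$ together with the flux condition $d_{i+1}(u_{i+1})_x(x_i^+) = d_i(u_i)_x(x_i^-)$ and the density condition $\hat u_{i+1}(x_i^+) = p_i \hat u_i(x_i^-)$ from \eqref{hu1}, these two terms cancel. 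This leaves
\[
\langle \mathcal{A}_u \mathbf{u}, \hat{\mathbf{u}} \rangle_{X_u} = \sum_{i=1}^{n} w_i \left( d_i \int_{\Omega_i} (u_i)_x (\hat u_i)_x \, {\rm d}x + \int_{\Omega_i} u_i \hat u_i \, {\rm d}x \right) =: a_u(\mathbf{u}, \hat{\mathbf{u}}),
\]
which is symmetric; taking $\hat{\mathbf{u}} = \mathbf{u}$ gives $\langle \mathcal{A}_u \mathbf{u}, \mathbf{u} \rangle_{X_u} \ge \|\mathbf{u}\|_{X_u}^2$, so $\mathcal{A}_u$ is symmetric and strictly positive. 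Since $\mathcal{D}(\mathcal{A})$ contains smooth functions supported away from the interfaces, it is dense in $X$.

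\textbf{Maximal monotonicity.} The bilinear form $a_u$ extends to a bounded, coercive form on $\mathcal{H}^1_u$ (coercivity with constant $\min\{1, \min_i d_i\}$, the fixed weights being harmless). For $\mathbf{f} \in X_u$, Lax--Milgram gives a unique $\mathbf{u} \in \mathcal{H}^1_u$ with $a_u(\mathbf{u}, \boldsymbol\varphi) = \langle \mathbf{f}, \boldsymbol\varphi\rangle_{X_u}$ for all $\boldsymbol\varphi \in \mathcal{H}^1_u$. Testing with $\boldsymbol\varphi$ supported in a single patch yields $-d_i (u_i)_{xx} + u_i = f_i$ distributionally, hence $u_i \in H^2(\Omega_i)$ by one-dimensional elliptic regularity; then, reading off the remaining boundary terms for general $\boldsymbol\varphi \in \mathcal{H}^1_u$ forces the flux-matching conditions at each $x_i$ and the Neumann conditions at $0, L$, so $\mathbf{u} \in \mathcal{D}(\mathcal{A})$ and $\mathcal{A}_u \mathbf{u} = \mathbf{f}$. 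Thus $\mathcal{A}_u$ is onto $X_u$; a densely defined symmetric operator that is surjective (hence injective, by the lower bound) is self-adjoint, so $\mathcal{A}_u$ is maximal monotone. The same applies to $\mathcal{A}_v$, and therefore to $\mathcal{A}$ on $X = X_u \times X_v$.

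\textbf{Analytic semigroup and compact resolvent.} Since $\mathcal{A}$ is self-adjoint with $\mathcal{A} \ge I$, its spectrum lies in $[1,\infty)$; hence $-\mathcal{A}$ is sectorial and generates an analytic semigroup by the standard theory of self-adjoint operators bounded below. Finally, $\mathcal{A}^{-1} : X \to \mathcal{D}(\mathcal{A}) \subset \mathcal{H}^2$ is bounded, and the inclusion $\mathcal{H}^2 = \prod_i H^2(\Omega_i) \hookrightarrow \prod_i L^2(\Omega_i) = X$ is compact by Rellich--Kondrachov on each bounded interval $\Omega_i$; so $\mathcal{A}^{-1}$ is compact, i.e. $\mathcal{A}$ has compact resolvent. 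The step I expect to require the most care is recovering the transmission and Neumann conditions in the maximal monotonicity argument --- verifying that the Lax--Milgram solution genuinely lies in $\mathcal{D}(\mathcal{A})$ --- since it is precisely there (and in the symmetry computation) that the weighted inner product, rather than the naive one, is essential.
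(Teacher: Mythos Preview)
Your proposal is correct and follows essentially the same route as the paper: both reduce to $\mathcal{A}_u$, use the weighted inner product so that integration by parts cancels interface terms, establish symmetry and coercivity, prove surjectivity via Lax--Milgram on $\mathcal{H}^1_u$ followed by test-function arguments to recover $H^2$ regularity and the transmission/Neumann conditions, and obtain the compact resolvent from the $\mathcal{H}^2 \hookrightarrow X$ embedding. The only cosmetic difference is that you invoke self-adjointness with lower bound directly for sectoriality, while the paper cites \cite{44-Zhang-2024} at that step.
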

	
	\begin{proof}
 Since ${\mathcal{A}}_u$ and ${\mathcal{A}}_v$ are essentially identical, it suffices to establish the properties for $\mathcal{A}_u$.  Although the proof technique parallels that of \cite[Lemma 3.3]{44-Zhang-2024} (see also \cite[Proposition 7.1]{36-Maciel-2020}), several adaptations are required. We detail these modifications below and omit the analogous steps.
		
	 Define
		\begin{equation}\label{Au}
			\begin{aligned}
				\mathcal{D}(\mathcal{A}_u) := \{ &\mathbf u \in \mathcal{H}_u^2\cap \mathcal{H}_u^1:
				d_{i+1} (u_{i+1})_{x}(x_i^{+}) = d_i (u_i)_{x}(x_i^{-}), i=1,\cdots,n-1,\\
				&(u_1)_x(0^{+}) = (u_n)_x(L^{-}) = 0 \},
			\end{aligned}
		\end{equation}
		 where $\mathcal{H}_u^1$ and $ \mathcal{H}_u^2$ are defined in \eqref{hu1} and \eqref{spaceh2}, respectively. 	For any $\mathbf u \in \mathcal{D}(\mathcal{A}_u)$,
		we use integration by parts and apply the interface and boundary conditions in \eqref{Au} to derive:
		\begin{equation}\label{con}
			\begin{aligned}
				\langle \mathcal{A}_u \mathbf u, \mathbf u \rangle_{X_u}
				&= \sum_{i=1}^{n} \frac{1}{\prod_{j=1}^{i-1} p_j} \left[
				\int_{\overline{\Omega}_i} -d_i (u_i)_{xx} u_i \, {\rm d}x
				+ \int_{\overline{\Omega}_i} (u_i)^2 \, {\rm d}x
				\right] \\
				&= \sum_{i=1}^{n} \frac{1}{\prod_{j=1}^{i-1} p_j} \left[
				\int_{\overline{\Omega}_i} d_i (u_i)_x^2 \, {\rm d}x
				+ \int_{\overline{\Omega}_i} (u_i)^2 \, {\rm d}x
				\right] \\
				&\geq \mathcal C_1 \| \mathbf u \|_{\mathcal{H}^1_u}^2 \geq 0,
			\end{aligned}
		\end{equation}
		where the constant $\mathcal C_1>0$ depends on $d_i,\,1/p_i$ ($i=1,\cdots,n$).
 Using an argument similar to that in the proof of \eqref{con}, we can prove that $\langle \mathcal{A}_u \mathbf u, \mathbf v \rangle = \langle \mathbf u, \mathcal{A}_u \mathbf v \rangle$ for all $\mathbf u, \mathbf v \in \mathcal{D}(\mathcal{A}_u)$. This implies $\mathcal{A}_u$ is symmetric .
		
		Next, we prove that for every $\lambda \geq 0$, the range $\Range(\lambda I_{X_u} + \mathcal{A}_u)=X_u$.
		That is, for any $\mathbf g=(g_1,\cdots,g_n) \in X_u$, the following problem admits a solution $\mathbf u\in\mathcal{D}(\mathcal{A}_u)$:
	\begin{subequations}\label{bv}
		\begin{empheq}[left=\empheqlbrace]{align}
			&-d_i (u_{i})_{xx} + (\lambda+1) u_i = g_i, && \text{in } \Omega_i, \label{bv-a} \\
			&u_{i+1}(x_i^{+}) = p_i u_i(x_i^{-}), && i=1,\cdots,n-1, \label{bv-d} \\
			&d_{i+1} (u_{i+1})_x(x_i^{+}) = d_i (u_i)_x(x_i^{-}), && i=1,\cdots,n-1, \label{bv-b} \\
			&(u_1)_x(0^{+}) = (u_n)_x(L^{-}) = 0. \label{bv-c}
		\end{empheq}
	\end{subequations}
		We first convert this problem into a weak formulation: find $\mathbf u \in \mathcal{H}_u^1$ such that
		\begin{equation}\label{weak}
			a(\mathbf u, \mathbf s) = \langle \mathbf g, \mathbf s \rangle_{X_u} \quad \text{for all } \mathbf s \in \mathcal{H}_u^1,
		\end{equation}
		where the bilinear form $a$ on $\mathcal{H}_u^1 \times \mathcal{H}_u^1$ is defined by
		\begin{equation*}
			a(\mathbf u, \mathbf s) = \sum_{i=1}^{n} \frac{1}{\prod_{j=1}^{i-1} p_j} \left[
			\int_{\overline{\Omega}_i} d_i (u_i)_x (s_i)_x \, \mathrm{d}x
			+ (\lambda+1) \int_{\overline{\Omega}_i} u_i s_i \, \mathrm{d}x
			\right].
		\end{equation*}
		Clearly, $a(\mathbf u,\mathbf s)$ is continuous. Following a similar approach to that for \eqref{con}, we can show that $a(\mathbf u,\mathbf s)$ is coercive. Then, by the Lax--Milgram theorem, problem \eqref{bv} admits a unique solution $ \mathbf u \in \mathcal{H}^{1}_{u} $ with
		\begin{equation}\label{hh1}
			\| \mathbf u \|_{\mathcal{H}^{1}_{u}} \leq \mathcal C_2 \| \mathbf g \|_{X_u},
		\end{equation}
where the constant $ \mathcal C_2 > 0$ depends on $d_i$, $p_i$, $1/d_i$ and $1/p_i$ ($i=1,\cdots,n$).
Combining this with the definition of $\mathcal{H}^{1}_{u}$ in \eqref{hu1}, we conclude that \eqref{bv-d} holds.

For each $i=1,\cdots,n$, we construct a test function $$\mathbf v=(v_1,\cdots,v_n)\in C^{\infty}(\overline{\Omega}_1) \times \cdots \times C^{\infty}(\overline{\Omega}_n)$$ such that $v_k\equiv 0$ for  all $1\le k\le n$ with $k\ne i$ and $v_i$ is supported in ${\Omega}_i$. Clearly, $\mathbf v\in \mathcal{H}_u^1$. Thus, \eqref{weak} implies  that $u_i\in H^2(\Omega_i)$ for all $i=1,\cdots,n$ (so that $\mathbf u \in \mathcal{H}^{2}_{u}$) and that \eqref{bv-a} holds.
Then, for each $i=1,\cdots,n-1$,  we construct a test function $\mathbf{v} = (v_1, \cdots, v_n) \in G\subset \mathcal{H}_u^1$ with the following properties: (i) $v_k \equiv 0$ for all $k$ such that $1 \le k \le i-1$ or $i+2 \le k \le n$; (ii) the component $v_i$ is supported in $(x_{i-1}, x_i]$;
(iii) The component $v_{i+1}$ is supported in $[x_i, x_{i+1})$. Here $G$ is defined in \eqref{spaceg}.
Since $\mathbf u \in \mathcal{H}^{1}_{u}\cap \mathcal{H}^{2}_{u} $, we see from \eqref{weak} that
		\begin{equation*}
		\begin{aligned}
		0=
		&-\int_{x_{i-1}}^{x_i}g_iv_i{\rm d}x-\dfrac{1}{p_i}\int_{x_i}^{x_{i+1}}g_{i+1}v_{i+1}{\rm d}x+\int_{x_{i-1}}^{x_i}\left[d_i(u_i)_x(v_i)_x+(\lambda+1) u_iv_i\right]{\rm d}x\\
		&+\dfrac{1}{p_i}\int_{x_i}^{x_{i+1}}\left[d_{i+1}(u_{i+1})_x(v_{i+1})_x+(\lambda+1) u_{i+1}v_{i+1}\right]{\rm d}x\\
		=&d_i(u_i)_x(x_i^-)v_i(x_i^-)-\dfrac{1}{p_i}d_{i+1}(u_{i+1})_x(x_i^+)v_{i+1}(x_i^+)+\int_{x_{i-1}}^{x_i}\left[-d_i(u_i)_{xx}+(\lambda+1) u_i\right]v_i{\rm d}x\\
		&-\int_{x_{i-1}}^{x_i}g_iv_i{\rm d}x+\dfrac{1}{p_i}\int_{x_i}^{x_{i+1}}\left[-d_{i+1}(u_{i+1})_{xx}+(\lambda+1) u_{i+1}-g_{i+1}\right]v_{i+1}{\rm d}x\\
		=&d_i(u_i)_x(x_i^-)v_i(x_i^-)-\dfrac{1}{p_i}d_{i+1}(u_{i+1})_x(x_i^+)v_{i+1}(x_i^+),
	   \end{aligned}
		\end{equation*}
		where we have used integration by parts in the second step and \eqref{bv-a} in the third step. Combining this with the fact that $\mathbf{v}\in\mathcal{H}_u^1$, we obtain \eqref{bv-b}. Using similar arguments, we can deduce that
$(u_1)_x(0^{+})=(u_n)_x(L^{-})=0$.
Thus, $\mathbf u \in 	\mathcal{D}(\mathcal{A}_u)$, and consequently, the operator $\mathcal{A}_u $ is maximal monotone. It is then densely defined and closed (by \cite[Proposition 7.1]{FA}). Then, by arguments similar to those in the proof of \cite[Lemma 3.3]{44-Zhang-2024}, we can deduce that $-\mathcal{A}_u $ is the infinitesimal generator of an analytic semigroup.

Furthermore, we see from \eqref{bv-a} that
\begin{equation*}
			 \| (u_i)_{xx} \|_{L_2(\Omega_i)}\le \frac{1}{d_i}\left[\| g_i \|_{L_2(\Omega_i)}+(\lambda+1)\| u_i \|_{L_2(\Omega_i)}\right],
\end{equation*}
where $\lambda\ge0$.
This combined with \eqref{hh1} implies that $ \| \mathbf u \|_{\mathcal{H}_u^{2}} \leq \mathcal C_3 \| \mathbf g \|_{X_u} $, where the constant $\mathcal C_3>0$ depends on $d_i$, $p_i$, $1/d_i$ and $1/p_i$ ($i=1,\cdots,n$).
Since $ \Range(\lambda I + \mathcal{A}_u) = X_u $ and $\lambda I + \mathcal{A}_u$ is injective (by \eqref{con}), it is invertible with a bounded inverse $(\lambda I + \mathcal{A}_u)^{-1} : X_u \to \mathcal{D}(\mathcal{A}_u)$. Since $\mathcal{D}({\mathcal{A}}_u)\xhookrightarrow{\text{c}}X_u$, we see that
$\mathcal{A}_u$ has a compact resolvent.
 This completes the proof.
	\end{proof}
	
\begin{remark}\label{rcompact}
By the proof of Lemma \ref{semigroup}, ${\mathcal{A}}_u$ (defined in \eqref{AuAv}) has a compact resolvent.
An analogous property holds for the operator ${\mathcal{A}}_v$, whose domain is given by
		\begin{equation}\label{Av}
		\begin{aligned}
			\mathcal{D}(\mathcal{A}_v) := \{ &\mathbf v \in \mathcal{H}_v^2\cap \mathcal{H}_v^1:
			\hat{d}_{i+1} (v_{i+1})_{x}(x_i^{+}) = \hat{d}_i (v_i)_{x}(x_i^{-}), i=1,\cdots,n-1,\\
			&(v_1)_x(0^{+}) = (v_n)_x(L^{-}) = 0 \},
		\end{aligned}
	\end{equation}
Because $-{\mathcal{A}}_u$ and $-{\mathcal{A}}_v$ both generate analytic semigroups, we can define their corresponding fractional power spaces $X^{\alpha}_u$ and $X^{\alpha}_v$ for $0<\alpha<1$. Moreover, since both ${\mathcal{A}}_u$ and ${\mathcal{A}}_v$ have compact resolvents, the embedding
\begin{equation}\label{eq:compact_embedding}
		X_{u}^\beta \times X_{v}^\beta \xhookrightarrow{c} X_{u}^\alpha \times X_{v}^\alpha
	\end{equation}
	is compact whenever $0<\alpha < \beta < 1$.
\end{remark}

For later applications, we establish the following embedding result for the function spaces defined above.

\begin{lemma}
	Suppose that $\alpha\in (1/4,1)$. Then the following statements hold:
	\begin{enumerate}
		\item [$\rm{(i)}$]  For $s>1/2$,
		\begin{equation}\label{A6}
		{W}^{s,2}(\Omega_1)\times\cdots\times{W}^{s,2}(\Omega_n) \xhookrightarrow{c}{C}^{r}(\Omega_1)\times\cdots\times{C}^{r}(\Omega_n),
		\end{equation}
	where $0\le r<s-1/2$; and
		\begin{equation}\label{xh}
		(X_u, \mathcal{H}_u^2)_{s,2}=(X_v, \mathcal{H}_v^2)_{s,2} ={W}^{2s,2}(\Omega_1)\times\cdots\times{W}^{2s,2}(\Omega_n),
		\end{equation}
		where $X_u, X_v, \mathcal{H}_u^2$ and $\mathcal{H}_v^2$ are defined in \eqref{spacex} and \eqref{spaceh2}.
		\item [$\rm{(ii)}$]  For  $0 < \theta < 1$,
	\begin{equation}\label{A7}
		(X_u, X_{u}^{\alpha})_{\theta,2} = (X_u, \mathcal{D}(\mathcal{A}_{u}))_{\alpha\theta,2}, \;\;(X_v, X_{v}^{\alpha})_{\theta,2} = (X_v, \mathcal{D}(\mathcal{A}_{v}))_{\alpha\theta,2}, \\
	\end{equation}
		and
		\begin{equation}\label{A9}
		 X_{u}^{\alpha},  X_{v}^{\alpha} \xhookrightarrow{c}{C}^{\nu}(\Omega_1)\times\cdots\times{C}^{\nu}(\Omega_n),
		\end{equation}
		where $0 < \nu < 2\alpha - \frac{1}{2}$, and $X_u, X_v, \mathcal{D}(\mathcal{A}_{u})$ and $\mathcal{D}(\mathcal{A}_{v})$ are defined in \eqref{spacex}, \eqref{Au} and \eqref{Av}.
	\end{enumerate}
(Here, $(S_1, S_2)_{\theta,q}$ denotes the interpolation space between function spaces $S_1$ and $S_2$ via the $K$-method; see \cite[Section 1.3.2]{Triebel}.)
\end{lemma}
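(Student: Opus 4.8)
Every assertion reduces, componentwise, to classical one–dimensional Sobolev embedding and real–interpolation identities on the intervals $\Omega_i$, combined with the self–adjointness of $\mathcal A_u,\mathcal A_v$ from Lemma \ref{semigroup}. Throughout, the weighted norms defining $X_u,X_v,\mathcal H_u^k,\mathcal H_v^k$ are equivalent to the unweighted product norms, since the $p_j,\hat p_j$ are fixed positive constants; hence all cited scalar results transfer to the product spaces, and real interpolation commutes with the finite direct sums involved. To prove \eqref{A6} one only needs the elementary fact that for a bounded open interval $\Omega_i\subset\mathbb R$ the Rellich--Kondrachov–type compact embedding $W^{s,2}(\Omega_i)\xhookrightarrow{c}C^{r}(\overline\Omega_i)$ holds whenever $s>r+\tfrac12$ and $r\ge0$; taking the Cartesian product over $i=1,\dots,n$ yields \eqref{A6}.

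\textbf{The interpolation identity \eqref{xh}.} Since $X_u=\prod_i L^2(\Omega_i)$ and $\mathcal H_u^2=\prod_i H^2(\Omega_i)$, real interpolation splits as $(X_u,\mathcal H_u^2)_{s,2}=\prod_{i=1}^n\bigl(L^2(\Omega_i),H^2(\Omega_i)\bigr)_{s,2}$, and likewise for the $v$–spaces. For a bounded interval one has the classical identity $\bigl(L^2(\Omega_i),H^2(\Omega_i)\bigr)_{s,2}=B^{2s}_{2,2}(\Omega_i)=W^{2s,2}(\Omega_i)$ for $0<s<1$, the last equality because the Besov and Bessel–potential scales coincide when $p=q=2$; see \cite[Sections 1.3.2 and 1.18]{Triebel}. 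Substituting proves \eqref{xh}.

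\textbf{The reiteration identity \eqref{A7}.} By Lemma \ref{semigroup}, $\mathcal A_u$ is self–adjoint on $X_u$, and the estimate \eqref{con} gives $\mathcal A_u\ge I_{X_u}$, so $\mathcal A_u$ is positive and invertible and the fractional powers $\mathcal A_u^\alpha$ are well defined. For such an operator the spectral theorem identifies the fractional power space with a real interpolation space, $X_u^\alpha=\mathcal D(\mathcal A_u^\alpha)=(X_u,\mathcal D(\mathcal A_u))_{\alpha,2}$ with equivalent norms (the exponent $q=2$ is essential here); see \cite[Section 1.18]{Triebel}. Applying the reiteration theorem for the $K$–method \cite[Section 1.10.2]{Triebel} with endpoints $X_u$ and $X_u^\alpha=(X_u,\mathcal D(\mathcal A_u))_{\alpha,2}$ gives, for $0<\theta<1$, $(X_u,X_u^\alpha)_{\theta,2}=(X_u,\mathcal D(\mathcal A_u))_{\alpha\theta,2}$, and the same computation applies verbatim to $\mathcal A_v$, which is \eqref{A7}.

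\textbf{The compact embedding \eqref{A9}, and the main difficulty.} Since $\mathcal D(\mathcal A_u)$ embeds continuously into $\mathcal H_u^2$ and real interpolation is monotone in its second argument, \eqref{A7} (at $\theta=1$, i.e. the identity $X_u^\alpha=(X_u,\mathcal D(\mathcal A_u))_{\alpha,2}$) together with \eqref{xh} applied with $s=\alpha\in(1/4,1)$ yields the continuous embedding $X_u^\alpha\hookrightarrow(X_u,\mathcal H_u^2)_{\alpha,2}=\prod_{i=1}^n W^{2\alpha,2}(\Omega_i)$. Because $\alpha>1/4$ we have $2\alpha>1/2$, so \eqref{A6} with $s=2\alpha$ and $r=\nu$, $0<\nu<2\alpha-\tfrac12$, gives $\prod_i W^{2\alpha,2}(\Omega_i)\xhookrightarrow{c}\prod_i C^{\nu}(\overline\Omega_i)$; composing a continuous map with a compact one is compact, so $X_u^\alpha\xhookrightarrow{c}\prod_i C^{\nu}(\overline\Omega_i)$, and the identical argument handles $X_v^\alpha$. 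The only non–routine point is the identification $X_u^\alpha=(X_u,\mathcal D(\mathcal A_u))_{\alpha,2}$ in Step 3: it uses crucially that $\mathcal A_u$ is self–adjoint and that the interpolation exponent is $q=2$—for a merely sectorial operator one would obtain only a chain of (non–coincident) embeddings, which would still suffice for \eqref{A9} but not for the equality \eqref{A7}.
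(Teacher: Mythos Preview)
Your argument is correct and follows essentially the same approach as the paper: the paper obtains \eqref{A6}, \eqref{xh}, \eqref{A7} by direct citation (to Amann and Triebel respectively) and for \eqref{A9} runs the chain $X_u^\alpha\subset (X_u,X_u^\alpha)_{\theta,2}=(X_u,\mathcal D(\mathcal A_u))_{\alpha\theta,2}\subset (X_u,\mathcal H_u^2)_{\alpha\theta,2}=\prod_i W^{2\alpha\theta,2}(\Omega_i)\xhookrightarrow{c}\prod_i C^\nu(\overline\Omega_i)$ for an auxiliary $0<\theta<1$ with $\alpha\theta\neq 1/2$. You reproduce the same identifications with fuller justification and simply bypass the auxiliary $\theta$ by using the self-adjoint identity $X_u^\alpha=(X_u,\mathcal D(\mathcal A_u))_{\alpha,2}$ directly, which is a harmless shortcut.
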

\begin{proof}
The proof can be derived from \cite[Theorem A.1]{shen}, but we include it here for completeness.
The results in \eqref{A6} and \eqref{xh} follow directly from \cite[Theorem 11.5]{Amann} and \cite[Theorem 11.6]{Amann}, respectively, while \eqref{A7} is established in \cite[Section 1.15.2]{Triebel}.
	
	Now we prove \eqref{A9}. Fix an arbitrary $\nu$ satisfying $0 < \nu < 2\alpha - 1/2$. We can choose $\theta \in (0,1)$ such that $\alpha\theta \neq 1/2$ and $2\alpha\theta - 1/2 > \nu$. Then
	\begin{equation*}
	X_{u}^{\alpha} \subset (X_u, X_{u}^{\alpha})_{\theta,2} = (X_u, \mathcal{D}(\mathcal{A}_{u}))_{\alpha\theta,2} \subset (X_u, \mathcal{H}_u^2)_{\alpha\theta,2} = W^{2\alpha\theta,2}(\overline{\Omega}_1)\times\cdots\times W^{2\alpha\theta,2}(\overline{\Omega}_n),
	\end{equation*}
	where the second and last equalities follow from \eqref{A7} and \eqref{xh}, respectively.
Since $2\alpha\theta - 1/2 > \nu$, we see from \eqref{A6}  that
the embedding \eqref{A9} holds. This completes the proof.
\end{proof}

We now state the definition of a strong solution to \eqref{abstract}, following \cite[Definition 3.3.1]{HenryD}.
\begin{definition}\label{defstrong}
	A strong solution of \eqref{abstract} on the interval $(0,T)$ with initial condition $\mathbf w(0)=\mathbf w_0\in X^{\alpha}_u\times X^{\alpha}_v$ is a function $\mathbf w \colon [0,T) \to X$ satisfying the following:
\begin{enumerate}[itemsep=1.25pt, parsep=1.25pt, topsep=1.25pt]
\item [(i)]  $\mathbf{w}(t)\in \mathcal{D}(\mathcal A)$ and ${\rm d}\mathbf w(t)/{\rm d}t$ exists  for all $t\in (0,T)$;
\item [(ii)] The abstract equation \eqref{abstract} is satisfied on $(0,T)$ with $\mathbf w(0) = \mathbf w_0$;
\item [(iii)] $F(\mathbf w(t))$ is locally H\"{o}lder continuous in $t$, and $\displaystyle \int_{0}^{\rho} \|F(\mathbf w(t))\|_X {\rm d}t < \infty$ for some $\rho > 0$.
\end{enumerate}
\end{definition}

	\begin{theorem}\label{c33}
		For any given nonnegative initial value $(\mathbf u_0,\mathbf v_0)\in X^{\alpha}_u\times X^{\alpha}_v$ with $\alpha\in(3/4,1)$, model \eqref{m1} admits a unique classical solution $(\mathbf u(x,t), \mathbf v(x,t))$ (defined in Definition \ref{defc1}) that exists for $t\in[0,\infty)$ and satisfies $(\mathbf u(x,0),\mathbf v(x,0))=(\mathbf u_0,\mathbf v_0)$. Moreover, the following properties hold:
		\begin{enumerate}[itemsep=1.25pt, parsep=1.25pt, topsep=1.25pt]
		\item [$\rm{(i)}$] One  has
		 \begin{equation*}
			\begin{aligned}
				\mathbf{u}(\cdot,t) &\in C([0, \infty), X_u^{\alpha}) \cap C^1((0, \infty), X_u^{\alpha}), \\
				\mathbf{v} (\cdot,t)&\in C([0, \infty), X_v^{\alpha}) \cap C^1((0, \infty), X_v^{\alpha}),
			\end{aligned}
		\end{equation*}
where $3/4<\alpha<1$.
    	\item [$\rm{(ii)}$]

    There exist two positive constants $M_u$ and $M_v$, depending on $(\mathbf u_0,\mathbf v_0)$, $\bm k$,  $\bm p$ and  $\mathbf{\hat{ p}}$, such that for all $(x,t)\in [0,L]\times [0,\infty)$,
		 \begin{equation*}
			\mathbf 0\le \mathbf u\le \mathbf M_u\;\;\text{and}\;\;\mathbf 0\le \mathbf v\le \mathbf M_v.
		\end{equation*}
		Here  $\mathbf M_u,\mathbf M_v\in\mathbb R^n$ are defined by
		\begin{equation}\label{MuMv}
		\mathbf M_u:=(M_u, p_1M_u,\cdots, \prod_{j=1}^{n-1}p_jM_u)\;\;\text{and}\;\;\mathbf M_v:=(M_v, \hat{p}_1M_v,\cdots, \prod_{j=1}^{n-1}\hat{p}_jM_v).
		\end{equation}
		\end{enumerate}
	\end{theorem}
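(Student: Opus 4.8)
The plan is to regard \eqref{abstract} as a semilinear parabolic equation driven by the sectorial operator $-\mathcal A$ of Lemma \ref{semigroup}, solve it locally by the standard variation‑of‑constants scheme, upgrade the abstract solution to a classical one by parabolic smoothing, and then invoke the comparison principle of Theorem \ref{appcp} with explicit constant super‑solutions to obtain the a priori bounds, which in turn force global existence.

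First I would check that, for $\alpha\in(3/4,1)$, the Nemytskii map $F$ in \eqref{auxic9} is well defined and locally Lipschitz from $X_u^{\alpha}\times X_v^{\alpha}$ into $X$: by the embedding \eqref{A9} every $\mathbf u\in X_u^{\alpha}$ (resp. $\mathbf v\in X_v^{\alpha}$) lies in $C(\overline\Omega_1)\times\cdots\times C(\overline\Omega_n)$, and since each $f_i,\hat f_i$ is a polynomial in $(u_i,v_i)$ (see \eqref{fhatf}), $F$ is bounded and Lipschitz on bounded sets. As $-\mathcal A$ generates an analytic semigroup (Lemma \ref{semigroup}), the abstract theory of semilinear parabolic problems (e.g. \cite[Theorem 3.3.3]{HenryD}) yields, for each nonnegative $(\mathbf u_0,\mathbf v_0)\in X_u^{\alpha}\times X_v^{\alpha}$, a unique strong solution $\mathbf w=(\mathbf u,\mathbf v)$ on a maximal interval $[0,T_{\max})$ with $\mathbf w\in C([0,T_{\max}),X^{\alpha})\cap C^{1}((0,T_{\max}),X)$, $\mathbf w(t)\in\mathcal D(\mathcal A)$ for $t>0$, and $t\mapsto F(\mathbf w(t))$ locally H\"older continuous; moreover either $T_{\max}=\infty$ or $\|\mathbf w(t)\|_{X^{\alpha}}\to\infty$ as $t\uparrow T_{\max}$.

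Next, for $t>0$ one has $\mathbf w(t)\in\mathcal D(\mathcal A)\subset\mathcal H^2$, and since $t\mapsto F(\mathbf w(t))$ is locally H\"older continuous, patchwise interior and boundary parabolic (Schauder) estimates together with the interface and flux conditions built into $\mathcal D(\mathcal A)$ and the embeddings \eqref{A6}, \eqref{A9} bootstrap $\mathbf w$ to a classical solution: $u_i,v_i\in C^{2,1}(\overline\Omega_i\times(0,T_{\max}))\cap C(\overline\Omega_i\times[0,T_{\max}))$ satisfying all identities of \eqref{m1} pointwise, with continuity up to $t=0$ coming from $\mathbf w\in C([0,T_{\max}),X^{\alpha})$ and $X^{\alpha}\hookrightarrow C(\overline\Omega_i)$. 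Nonnegativity then follows from Theorem \ref{appcp}: since $f_i(0,\cdot)=\hat f_i(\cdot,0)=0$, comparing $(\mathbf u,\mathbf v)$ against the trivial pair ($\underline{\mathbf u}=\mathbf 0$, $\underline{\mathbf v}=\mathbf 0$) and using $\mathbf u_0,\mathbf v_0\ge\mathbf 0$ gives $\mathbf u,\mathbf v\ge\mathbf 0$ on $[0,T_{\max})$. For the upper bounds, using $v_i\ge0$ one has $(u_i)_t-d_i(u_i)_{xx}-f_i(u_i,v_i)\ge(u_i)_t-d_i(u_i)_{xx}-r_iu_i(1-u_i/k_i)$, so I would look for a spatially constant super‑solution: the interface conditions $\overline u_{i+1}=p_i\overline u_i$ force $\overline u_i=(\prod_{j=1}^{i-1}p_j)M_u$, i.e. the vector $\mathbf M_u$ of \eqref{MuMv}, the flux and no‑flux conditions hold trivially, and $-r_i\overline u_i(1-\overline u_i/k_i)\ge0$ holds as soon as $(\prod_{j=1}^{i-1}p_j)M_u\ge k_i$ for every $i$. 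Choosing $M_u$ (and symmetrically $M_v$) large enough to also dominate $\|u_{i,0}\|_{C(\overline\Omega_i)}$ for all $i$, Theorem \ref{appcp}, applied with $(\overline{\mathbf u},\underline{\mathbf v})=(\mathbf M_u,\mathbf 0)$ versus $(\mathbf u,\mathbf v)$ and with $(\mathbf u,\mathbf v)$ versus $(\mathbf 0,\mathbf M_v)$, yields $\mathbf 0\le\mathbf u\le\mathbf M_u$ and $\mathbf 0\le\mathbf v\le\mathbf M_v$ on the whole interval of existence. Finally, the uniform $C^0$ bound makes $\|F(\mathbf w(t))\|_X$ bounded, and inserting this into the variation‑of‑constants formula with the smoothing estimate $\|\mathcal A^{\alpha}e^{-t\mathcal A}\|\le C_{\alpha}t^{-\alpha}$ (which is integrable near $0$ since $\alpha<1$) shows $\sup_{t\in[0,T_{\max})}\|\mathbf w(t)\|_{X^{\alpha}}<\infty$; by the blow‑up alternative $T_{\max}=\infty$, and the bounds in (ii) hold for all $t\ge0$.

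The step I expect to be the main obstacle is the passage from the abstract strong solution in $\mathcal D(\mathcal A)$ — where the matching conditions $u_{i+1}(x_i^+)=p_iu_i(x_i^-)$ and the flux continuity at the interfaces are only encoded in a trace sense — to a genuinely classical, patchwise $C^{2,1}$ solution for which all identities in \eqref{m1} hold pointwise, with enough regularity near the interfaces $x_i$ and the endpoints $x=0,L$ to legitimately invoke the comparison principle. This requires careful patchwise parabolic regularity theory and a bootstrap argument exploiting the H\"older continuity of $t\mapsto F(\mathbf w(t))$ and the interpolation embeddings of the preceding lemma; the remaining steps are the standard semigroup machinery together with the elementary construction of the constant super‑solutions $\mathbf M_u,\mathbf M_v$.
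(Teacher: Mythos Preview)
Your proposal is correct and follows essentially the same route as the paper: local existence via \cite[Theorem 3.3.3]{HenryD} for the sectorial operator $\mathcal A$, upgrade to a classical solution, comparison with the constant vectors $(\mathbf M_u,\mathbf 0)$ and $(\mathbf 0,\mathbf M_v)$ satisfying the interface conditions, and then the variation-of-constants estimate with $\|\mathcal A^{\alpha}e^{-t\mathcal A}\|\le C_\alpha t^{-\alpha}e^{-\delta t}$ to bound the $X^\alpha$-norm and conclude $T_{\max}=\infty$. The only notable difference concerns the step you flag as the main obstacle: rather than invoking patchwise parabolic Schauder theory, the paper exploits the one-dimensional structure directly. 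Since $\alpha>3/4$ gives $X_u^\alpha\hookrightarrow C^1(\overline\Omega_1)\times\cdots\times C^1(\overline\Omega_n)$ and the strong solution is $C^1$ in $t$ with values in $X_u^\alpha$, one has $u_i\in C^{1,1}(\overline\Omega_i\times(0,T_{\max}))$; then for each fixed $t>0$ the equation itself gives $d_i(u_i)_{xx}=(u_i)_t+u_i-f_i(u_i,v_i)-u_i\in C^1(\overline\Omega_i)$, so $\mathbf u(\cdot,t)\in\mathcal H_u^3\hookrightarrow C^2(\overline\Omega_1)\times\cdots\times C^2(\overline\Omega_n)$, which is enough for pointwise satisfaction of all identities in \eqref{m1} including the interface conditions. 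This elementary bootstrap sidesteps any delicate Schauder theory near the interfaces.
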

\begin{proof}
The existence of a classical solution is inspired by the approach in \cite[Theorems C.1 and C.2]{shen}, which studied a chemotaxis model. The details are provided below.
The uniqueness follows directly from Theorem \ref{appcp} and Remark \ref{strict}.
	
For given $3/4<\alpha<1$, the embedding \eqref{A9} implies:
\begin{equation}\label{imu}
	X_u^{\alpha}, \, X_v^{\alpha} \xhookrightarrow{c} C^1(\overline{\Omega}_1) \times \cdots \times C^1(\overline{\Omega}_n). \quad
\end{equation}
As a consequence, $F:X_u^\alpha\times X_v^\alpha\to X$, defined by \eqref{auxic9}, is locally Lipschitz continuous. By \cite[Theorem 3.3.3 and Theorem 3.5.2]{HenryD}, for  any initial value $(\mathbf{u}_0, \mathbf{v}_0) \in X^{\alpha}_u \times X^{\alpha}_v$,
 there exists $T_{\max}=T_{\max}(\mathbf{u}_0, \mathbf{v}_0) > 0$ such that \eqref{abstract} admits a unique strong solution  in the sense of Definition \ref{defstrong}, and
	 \begin{equation}\label{strongest}
\begin{split}
	& (\mathbf{u}(t), \mathbf{v}(t))\in C([0, T_{\max}), X_u^{\alpha})\times C([0, T_{\max}), X_v^{\alpha}),\\
& (\mathbf{u}(t), \mathbf{v}(t)) \in C^1((0, T_{\max}), X_u^{\alpha}) \times C^1((0, T_{\max}), X_v^{\alpha}),
\end{split}
	 \end{equation}
where $3/4<\alpha<1$.

	
Let
	\begin{equation*}
	\mathbf{u}(x,t)=(u_1,\cdots,u_n)=\mathbf u(t)(x)\;\;\text{and}\;\;\mathbf{v}(x,t):=(v_1,\cdots,v_n)=\mathbf v(t)(x)\;\;\text{for}\;\;x\in [0,L].
	\end{equation*}
	We now show that the strong solution is indeed a classical solution of the original model \eqref{m1}.
It follows from \eqref{imu} and \eqref{strongest} that
\begin{equation*}
	{u_i}(x, t), \, {v_i}(x, t) \in C^{1,0}(\overline{\Omega}_i\times [0, T_{\max})) \cap C^{1,1}(\overline{\Omega}_i \times (0, T_{\max})) \text{  for all  }i=1,\cdots,n.
\end{equation*}
Thus, $f_i({u}_i(x,t),{v}_i(x,t)),\,\hat f_i({u}_i(x,t),{v}_i(x,t))\in C^{1,1}(\overline{\Omega}_i\times(0, T_{\max}))$.
Now, for fixed $t > 0$, we rewrite \eqref{m1-1} and \eqref{m1-2} as the following form:
	\begin{equation*}
	\begin{aligned}
	&d_i( u_i)_{xx}(x,t)=u_i(x,t)+(u_i)_t(x,t)-f_i(u_i(x,t),v_i(x,t))\in C^1(\overline{\Omega}_i),\\
	&\hat{d}_i( v_i)_{xx}(x,t)=u_i(x,t)+(v_i)_t(x,t)-\hat{f}_i(u_i(x,t),v_i(x,t))\in C^1(\overline{\Omega}_i).
	\end{aligned}
	\end{equation*}
This implies that $(\mathbf{u}(\cdot,t), \mathbf{v}(\cdot,t))\in \mathcal{H}^3=\mathcal{H}_u^3\times\mathcal{H}_v^3$ for fixed $t>0$. By \eqref{A6}, we have 
$$\mathcal{H}_u^3,\mathcal{H}_v^3\xhookrightarrow{c} C^2(\overline{\Omega}_1)\times\cdots\times C^2(\overline{\Omega}_n),$$ which implies that 
	\begin{equation}\label{smooth}
	\begin{aligned}
	u_i(\cdot,\cdot), v_i(\cdot,\cdot) \in  C^{2,1}(\overline{\Omega}_i\times(  0, T_{\max})).
	\end{aligned}
	\end{equation}
Consequently, $(\mathbf{u}(x,t), \mathbf{v}(x,t))$  satisfies \eqref{m1-1}-\eqref{m1-2} pointwise. Since $(\mathbf{u}(x,t), \mathbf{v}(x,t))\in \mathcal{D}(\mathcal A)$ and given the regularity in \eqref{smooth}, it follows that $(\mathbf{u}(x,t), \mathbf{v}(x,t))$ satisfies
\eqref{m1-3}-\eqref{m1-5} pointwise. Therefore, $(\mathbf u(x,t),\mathbf v(x,t))$ is a classical solution of of the original model \eqref{m1}.
	
    Denote $\mathbf{w}(x,t):=(\mathbf u(x,t),\mathbf v(x,t))$ for simplicity. Now we show that $\{\mathbf{w}(\cdot,t)\,|\,t>0\}$ is bounded in $X_u^\alpha\times X_v^\alpha$.
    Choose two positive constants $M_u,M_v$ such that $(\mathbf M_u, \mathbf 0)\succeq( \mathbf u_0,\mathbf v_0)\succeq(\mathbf 0, \mathbf M_v)$ and ensure that the the pairs $(\mathbf M_u, \mathbf 0)$ and $(\mathbf 0, \mathbf M_v)$ (defined in \eqref{MuMv}) satisfy \eqref{B2} and \eqref{compr}. Here $\succeq$ is defined in \eqref{order}.
    Then it follows from Theorem \ref{appcp} that
    \begin{equation*}
   (\mathbf M_u, \mathbf 0)\succeq( \mathbf u(x,t),\mathbf v(x,t))\succeq(\mathbf 0, \mathbf M_v)\;\;\text{for}\;\;(x,t)\in[0,L]\times (0,T_{\max}).
    \end{equation*}
This implies that there exists a positive constant $\mathcal M_1$ such that
    \begin{equation}\label{eglobal}
	\begin{aligned}
	\|F(\mathbf{w}(x,t))\|_X\le\mathcal{M}_1 \;\;\text{for all}\;\;t>0.
	\end{aligned}
  \end{equation}
   According to \cite[Lemma 3.3.2]{HenryD}, $\mathbf{w}(\cdot, t)$ satisfies the following integral equation
   \begin{equation}\label{milde}
   	\mathbf{w}(\cdot, t) = e^{-t\mathcal{A}} \mathbf{w}_0 + \int_{0}^{t} e^{-(t-s)\mathcal{A}} F(\mathbf{w}(\cdot,s)) \, \mathrm{d}s.
   \end{equation}
 By Lemma \ref{semigroup} and its proof, we see that $\mathcal A$ has a compact resolvent, and $0$ belongs to the   resolvent set of $\mathcal A$. It follows from \cite[Chapter I, Section 1.4]{HenryD} that, for any given $0\le\beta\le1$,  the operator $\mathcal A^{\beta} e^{-t\mathcal A}$ is bounded on $X$ (i.e., $\mathcal A^{\beta} e^{-t\mathcal A}\in L(X)$), and
 \begin{equation}\label{change}
 \mathcal A^{\beta} e^{-t\mathcal A}\widetilde {\mathbf W}= e^{-t\mathcal A}\mathcal A^{\beta}\widetilde {\mathbf W}\;\;\text{for all}\;\;\widetilde {\mathbf W}\in \mathcal D(\mathcal A^\beta)=X_u^\beta\times X_v^\beta.
 \end{equation}
Moreover, there exists a positive constant $\widetilde{\mathcal M}_{\beta} $ such that
 \begin{equation}\label{bbe}
  \|\mathcal A^{\beta} e^{-t\mathcal A}\|_{L(X)}\le \widetilde{\mathcal M}_{\beta} t^{-\beta}e^{-\delta t},
  \end{equation}
  where $\|\cdot\|_{L(X)}$ is the operator norm in $L(X)$. 
Then, for all $t>0$, we see from \eqref{milde} that
     \begin{equation}\label{bound}
     \begin{aligned}
   	\|\mathbf{w}(\cdot, t)\|_{X^{\alpha}_u\times X^{\alpha}_v} &\le \|e^{-t\mathcal{A}}\|\cdot\| \mathbf{w}(\cdot,0)\|_{X^{\alpha}_u\times X^{\alpha}_v} + \int_{0}^{t} \|\mathcal{A}^{\alpha}e^{-(t-s)\mathcal{A}}\|_{L(X)}\| F(\mathbf{w}(\cdot,s))\|_{X} \, \mathrm{d}s\\
   	&\le\widetilde{\mathcal{M}}_0\| \mathbf{w}(\cdot,0)\|_{X^{\alpha}_u\times X^{\alpha}_v}+ \int_{0}^{t} \mathcal M_1\widetilde{\mathcal{M}}_\alpha(t-s)^{-\alpha}e^{-\delta (t-s)} \, \mathrm{d}s\\
   &\le\widetilde{\mathcal{M}}_0\| \mathbf{w}(\cdot,0)\|_{X^{\alpha}_u\times X^{\alpha}_v}+ \int_{0}^{\infty} \mathcal M_1\widetilde{\mathcal{M}}_\alpha(t-s)^{-\alpha}e^{-\delta (t-s)} \, \mathrm{d}s<\infty.
   	\end{aligned}
   \end{equation}
The first inequality uses \eqref{change}, while the second follows from \eqref{bbe} with $\beta = 0$ and $\beta = \alpha$, respectively.
  By \eqref{bound}, we see that $\{\mathbf{w}(\cdot,t)\,|\,t>0\}$ is bounded in $X_u^\alpha\times X_v^\alpha$.  By \eqref{imu} again, we see that, for every closed bounded set $\mathcal B$ in $X^\alpha_u\times X^\alpha_v$, the image $F(\mathcal B)$ is bounded in $X$. Then
   it follows from   \cite[Theorem 3.3.4]{HenryD} that $T_{\max}=\infty$. This completes the proof.
	\end{proof}
\begin{remark}\label{linearstability}
It follows from Lemma \ref{semigroup} and the proof of Theorem \ref{defstrong} that $-\mathcal{A}$ generates an analytic semigroup and that $F:X_u^\alpha\times X_v^\alpha\to X$, defined by \eqref{auxic9}, is locally Lipschitz continuous. Then, by \cite[Theorem 5.1.1]{HenryD}, a steady state of the abstract evolution equation \eqref{abstract} (or the original model \eqref{m1}) is asymptotically stable (resp., unstable) if
if it is linearly stable (resp., linearly unstable).
\end{remark}
 Note that $X^{\alpha}_u$ and $X^{\alpha}_v$ are ordered Banach spaces. Their positive cones, $P_{u}$ and $P_{v}$, both have nonempty interior.
 The cone $K=P_u\times (-P_v)$ induces the order $\ge_k$ as follows:
\begin{equation}\label{kine}
 (u_1,v_1)\ge_k (u_2,v_2)\;\; \text{if and only if} \;\;(u_1-u_2,v_1-v_2)\in K.
 \end{equation}
Define the continuous semiflow generated by model \eqref{abstract} as follows:
 \begin{equation}\label{ttt}
 	T_t: (\mathbf u_0,\mathbf v_0)\in X^{\alpha}_u\times X^{\alpha}_v\mapsto	 (\mathbf u(\cdot,t), \mathbf v(\cdot,t))\in X^{\alpha}_u\times X^{\alpha}_v.
 \end{equation}

\begin{theorem}\label{C4}
	Let the semiflow $T_t$ be defined as in \eqref{ttt}. Then the following statements hold:
\begin{enumerate}[itemsep=1.25pt, parsep=1.25pt, topsep=1.25pt]
\item [$\rm{(i)}$] $T_t$ is strongly order-preserving with respect to
  $\ge_k$ for all $t>0$, where $\ge_k$ is defined in \eqref{kine};
\item [$\rm{(ii)}$] $T_t$ is order compact in $X^{\alpha}_u\times X^{\alpha}_v$ for all $t>0$.
\end{enumerate}
\end{theorem}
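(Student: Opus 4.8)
The plan is to derive both assertions from the comparison principle (Theorem~\ref{appcp} and Remark~\ref{strict}) together with the smoothing estimates for the analytic semigroup $e^{-t\mathcal A}$ recorded in the proof of Theorem~\ref{c33}.

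For (i), observe first that on $X^{\alpha}_u\times X^{\alpha}_v$ the order $\ge_k$ induced by $K=P_u\times(-P_v)$ is the restriction of the order $\succeq$ of \eqref{order}: $(\mathbf u^1,\mathbf v^1)\ge_k(\mathbf u^2,\mathbf v^2)$ iff $\mathbf u^1\ge\mathbf u^2$ and $\mathbf v^1\le\mathbf v^2$ componentwise. Let $(\mathbf u^1,\mathbf v^1)$ and $(\mathbf u^2,\mathbf v^2)$ be the global classical solutions furnished by Theorem~\ref{c33} with $(\mathbf u^1(\cdot,0),\mathbf v^1(\cdot,0))\ge_k(\mathbf u^2(\cdot,0),\mathbf v^2(\cdot,0))$. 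Taking $(\overline{\mathbf u},\underline{\mathbf v})=(\mathbf u^1,\mathbf v^1)$ and $(\underline{\mathbf u},\overline{\mathbf v})=(\mathbf u^2,\mathbf v^2)$, the conditions \eqref{B2}--\eqref{compr} hold with equalities throughout, so Theorem~\ref{appcp} gives $T_t(\mathbf u^1_0,\mathbf v^1_0)\ge_k T_t(\mathbf u^2_0,\mathbf v^2_0)$ for every $t>0$; hence $T_t$ is monotone. If the initial data are moreover distinct, Remark~\ref{strict} yields $\mathbf u^1(x,t)\gg\mathbf u^2(x,t)$ and $\mathbf v^2(x,t)\gg\mathbf v^1(x,t)$ for all $x\in[0,L]$ and $t>0$. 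Since $X^{\alpha}_u,X^{\alpha}_v\hookrightarrow C^1(\overline{\Omega}_1)\times\cdots\times C^1(\overline{\Omega}_n)$ by \eqref{imu}, and $\mathrm{int}\,P_u$ (resp. $\mathrm{int}\,P_v$) consists exactly of the functions that are strictly positive on every $\overline{\Omega}_i$, these strict pointwise inequalities place $T_t(\mathbf u^1_0,\mathbf v^1_0)-T_t(\mathbf u^2_0,\mathbf v^2_0)$ in $\mathrm{int}\,K$. Thus $T_t$ is strongly monotone for every $t>0$, and the strongly order-preserving property then follows by continuity: given $x>_k y$ and any $t_0\in(0,t)$ we have $T_{t_0}x-T_{t_0}y\in\mathrm{int}\,K$, and since $(x',y')\mapsto T_{t_0}x'-T_{t_0}y'$ is continuous and $\mathrm{int}\,K$ open, there are neighbourhoods $U\ni x$, $V\ni y$ with $T_{t_0}U$ lying above $T_{t_0}V$ in the order $\ge_k$.

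For (ii), fix $t>0$ and an order interval $[\mathbf a_0,\mathbf b_0]_{\ge_k}\subset X^{\alpha}_u\times X^{\alpha}_v$. This set need not be bounded in the $X^{\alpha}$-norm, but each of its elements is pointwise pinched between the fixed $C^1$-functions $\mathbf a_0$ and $\mathbf b_0$, hence bounded in $L^\infty$ and so in $X=X_u\times X_v$. For $\mathbf w_0$ in the order interval, Theorem~\ref{c33}(ii) (or comparison, via Theorem~\ref{appcp}, with the solutions issuing from $\mathbf a_0$ and $\mathbf b_0$) keeps the solution $\mathbf w(\cdot,s)$ pointwise bounded by a constant depending only on $\mathbf a_0,\mathbf b_0,\mathbf k,\mathbf p,\hat{\mathbf p}$, uniformly in $s\ge0$; hence $\|F(\mathbf w(\cdot,s))\|_X\le C$ uniformly in $s\ge0$ and over the order interval, with $F$ as in \eqref{auxic9}. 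Fixing $\beta\in(\alpha,1)$ and proceeding from the mild formulation \eqref{milde} and the estimate \eqref{bbe} exactly as in the derivation of \eqref{bound}, one obtains
\begin{equation*}
\|\mathbf w(\cdot,t)\|_{X^{\beta}_u\times X^{\beta}_v}\le\widetilde{\mathcal M}_\beta\,t^{-\beta}e^{-\delta t}\|\mathbf w_0\|_X+C\,\widetilde{\mathcal M}_\beta\int_0^t(t-s)^{-\beta}e^{-\delta(t-s)}\,{\rm d}s,
\end{equation*}
which is bounded uniformly over $[\mathbf a_0,\mathbf b_0]_{\ge_k}$, since $\|\mathbf w_0\|_X$ is bounded there and the integral converges (as $\beta<1$). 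Therefore $T_t([\mathbf a_0,\mathbf b_0]_{\ge_k})$ is bounded in $X^{\beta}_u\times X^{\beta}_v$, and the compact embedding $X^{\beta}_u\times X^{\beta}_v\xhookrightarrow{c}X^{\alpha}_u\times X^{\alpha}_v$ of \eqref{eq:compact_embedding} shows it is precompact in $X^{\alpha}_u\times X^{\alpha}_v$; that is, $T_t$ is order compact.

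The semigroup estimates here are routine, being transcribed from the proof of Theorem~\ref{c33}. The step I expect to be the main obstacle is the bookkeeping in (i): confirming that the strict pointwise inequalities supplied by Remark~\ref{strict}, in the presence of the interface relations $u_{i+1}(x_i^+)=p_iu_i(x_i^-)$, genuinely locate the difference of the two orbits in the interior of $K$ inside the fractional power space $X^{\alpha}_u\times X^{\alpha}_v$ --- i.e. identifying $\mathrm{int}\,P_u$ and $\mathrm{int}\,P_v$ correctly in this discontinuous-density setting --- and then extracting from strong monotonicity the precise strongly order-preserving statement that the monotone-dynamical-systems theory of \cite{41-Hess-1991,46-Hsu-1996,47-Lam-2016,48-Smith-1995} requires.
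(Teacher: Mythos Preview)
Your proof is correct and follows the paper's approach: part (i) via Theorem~\ref{appcp} and Remark~\ref{strict}, part (ii) via comparison to obtain a uniform $L^\infty$ bound on the orbit, the variation-of-constants formula \eqref{milde}, and the compact embedding \eqref{eq:compact_embedding}. One small difference worth noting: in (ii) you estimate the linear term by $\widetilde{\mathcal M}_\beta\,t^{-\beta}e^{-\delta t}\|\mathbf w_0\|_X$, whereas the paper uses $\widetilde{\mathcal M}_{\beta-\alpha}\,t^{-(\beta-\alpha)}e^{-\delta t}\|\mathbf w_0\|_\alpha$ --- your version is actually the more robust choice, since elements of the order interval are pointwise (hence $X$-) bounded but need not be uniformly bounded in the $X^\alpha$-norm.
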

\begin{proof}
Statement (i) is a direct consequence of Theorem \ref{appcp}, Remark \ref{strict} and Theorem \ref{c33}. Next, we prove statement (ii).
Note that $T_t$ is order compact if it maps every order interval in $P_{u}\times P_{v} \subset X^{\alpha}_u \times X^{\alpha}_v$ into a relatively compact set.
Consider the following order interval with respect to $\ge_k$:
	\begin{equation*}
		{I} := \left\{ (\mathbf{u}_0, \mathbf{v}_0) \in P_{u}\times P_{v} : (\overline{\mathbf{u}}_0, \mathbf 0) \ge_k (\mathbf{u}_0, \mathbf{v}_0) \ge_k (\mathbf 0, \overline{\mathbf{v}}_0) \right\}.
	\end{equation*}
Let $(\overline{\mathbf{u}}(\cdot,t),\mathbf 0)$, $(\mathbf 0, \overline{\mathbf{v}}(\cdot,t))$ and $(\mathbf{u}(\cdot,t), \mathbf{v}(\cdot,t))$ be the solutions of \eqref{abstract} with the initial values $(\overline{\mathbf{u}}_0,\mathbf 0)$, $(\mathbf 0, \overline{\mathbf{v}}_0)$, and $(\mathbf{u}_0, \mathbf{v}_0)$, respectively.
By Theorem \ref{c33}, these are all classical solutions to the original model \eqref{m1}. Then Theorem \ref{appcp} implies that
    \begin{equation*}
  (\overline{\mathbf{u}}(\cdot,t), \mathbf 0)\ge_k( \mathbf u(\cdot,t),\mathbf v(\cdot,t))\ge_k(\mathbf 0, \overline{\mathbf{v}}(\cdot,t))\;\;\text{for all}\;\;t>0.
    \end{equation*}
This combined with Theorem \ref{c33} (ii) implies that the set $$T_t(I)=\left\{(\mathbf u(\cdot,t),\mathbf v(\cdot,t))\,\big|\,(\mathbf u(\cdot,0),\mathbf v(\cdot,0))\in I\right\}$$ is bounded in $(C(\overline \Omega_1)\times \cdots \times C(\overline \Omega_n))^2$.
This  implies that there exists a positive constant $\mathcal M_1$ such that, for any fixed $t>0$,
    \begin{equation}\label{eglobal2}
	\begin{aligned}
\|F(\mathbf w(\cdot,t))\|_X\le \mathcal M_1 \;\;\text{whenever}\;\; \mathbf w(\cdot,0)\in I.
	\end{aligned}
  \end{equation}
Here $\mathbf{w}(\cdot,t) := (\mathbf{u}(\cdot,t), \mathbf{v}(\cdot,t))$, and  $F$ is defined by \eqref{auxic9}.

Then using arguments similar to those in the proof of Theorem \ref{c33}, we see from \eqref{eglobal2} that, for any initial value $\mathbf w(\cdot,0)\in I$ and any fixed $t > 0$,
	\begin{align*}
		\|\mathbf{w}(\cdot,t)\|_{\beta}
		&\le \|\mathcal{A}^{\beta - \alpha} e^{-t\mathcal{A}}\| \cdot \|\mathbf{w}_0\|_{\alpha} + \int_{0}^{t} \|\mathcal{A}^{\beta} e^{-(t - s)\mathcal{A}}\|_{L(X)} \cdot \|F( \mathbf{w}(\cdot,s))\|_X  \, \mathrm{d}s \\
		&\le\widetilde {\mathcal M}_{\beta-\alpha} \, t^{-(\beta - \alpha)} e^{-\delta t} \|\mathbf{w}_0\|_{\alpha} +\widetilde {\mathcal M}_{\beta}\mathcal M_1 \int_{0}^{t} (t - s)^{-\beta} e^{-\delta(t - s)}  \, \mathrm{d}s<\infty.
	\end{align*}
Here $\alpha<\beta<1$, and $\widetilde M_{\beta},\, \widetilde M_{\beta-\alpha},\,\delta$ are defined in the proof of  Theorem \ref{c33}.
	Consequently, $T_t(I)$
is bounded in $X_u^\beta\times X_v^\beta $ for any fixed $t>0$. This combined with the compact embedding \eqref{eq:compact_embedding} implies that the $T_t(I)$ is relatively compact in $X_u^\alpha\times X_v^\alpha$. This complete the proof.
\end{proof}

Finally, we consider the following eigenvalue problem
	\begin{subequations}\label{am3}
		\begin{empheq}[left=\empheqlbrace]{align}
			&\ds -d_i (\phi_i)_{xx}-c_i(x) \phi_i=\lambda\phi_i(x),\;\; \text{in } \Omega_i,\;\; i=1,2,\cdots,n, \label{am3-a} \\
			&\phi_{i+1}(x_i^{+})= p_i\phi_i(x_i^{-}), \quad d_{i+1}(\phi_{i+1})_{x}(x_i^{+})= d_i (\phi_{i})_{x}(x_i^{-}), \label{am3-b} \\
			&(\phi_{1})_{x}(0^{+})=(\phi_{n})_{x}(L^{-})=0. \label{am3-c}
		\end{empheq}
	\end{subequations}
\begin{proposition}\label{prin-exist}
	Assume that $c_i(x)\in C^1(\overline{\Omega}_i)$ for all $i=1,\cdots,n$, and let $\mathbf d=(d_1,\cdots,d_n)$. Then the eigenvalue problem \eqref{am3} admits a simple eigenvalue $\lambda_{1}(\mathbf d)$ with a corresponding positive eigenfunction $\Phi^*$, and any other eigenvalue $\lambda \neq \lambda_1(\mathbf d)$ of \eqref{am3} satisfies
	$\operatorname{Re}(\lambda) > \lambda_{1}$. Moreover, $\la_1(\mathbf d)$ is continuous with respect to $\mathbf d$.
\end{proposition}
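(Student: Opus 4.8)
The plan is to realize \eqref{am3} as the eigenvalue problem of a self-adjoint operator with compact resolvent, and then to combine the Krein--Rutman theorem with the maximum principle of Theorem \ref{parabolic} and the variational description of the bottom of the spectrum. The only genuine subtlety is the pair of interface conditions in \eqref{am3-b} --- the density jump $\phi_{i+1}(x_i^{+})=p_i\phi_i(x_i^{-})$ and the flux balance $d_{i+1}(\phi_{i+1})_x(x_i^{+})=d_i(\phi_i)_x(x_i^{-})$ --- which destroy self-adjointness in the ordinary $L^2$ inner product and require a maximum principle valid across interfaces; both are already handled in the excerpt, by Lemma \ref{semigroup} (self-adjointness in the $\mathbf p$-weighted space $X_u$, together with the compact resolvent) and by Theorem \ref{parabolic}. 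Concretely, take $X_u,\mathcal H_u^1,\mathcal A_u,\mathcal D(\mathcal A_u)$ as in \eqref{spacex}, \eqref{hu1}, \eqref{AuAv}, \eqref{Au}, with the weights $1/\prod_{j=1}^{i-1}p_j$ and the flux condition built from the coefficients $p_i,d_i$ of \eqref{am3}, and let $\mathcal C$ be multiplication by $\mathbf c=(c_1,\cdots,c_n)$, which is bounded and self-adjoint on $X_u$ since each $c_i\in C^1(\overline\Omega_i)$. Because \eqref{am3-b}--\eqref{am3-c} are precisely the conditions defining $\mathcal D(\mathcal A_u)$, problem \eqref{am3} is equivalent to $\mathcal B\Phi=\lambda\Phi$ with $\mathcal B:=\mathcal A_u-(I+\mathcal C)$; being a bounded self-adjoint perturbation of $\mathcal A_u$, the operator $\mathcal B$ is self-adjoint on $X_u$ with the same domain $\mathcal D(\mathcal A_u)$, has compact resolvent, and has quadratic-form domain $\mathcal H_u^1$ (cf.\ \eqref{con}). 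Hence $\operatorname{spec}(\mathcal B)$ consists of real eigenvalues of finite multiplicity, bounded below and accumulating only at $+\infty$; in particular every eigenvalue of \eqref{am3} is real, and we put $\lambda_1=\lambda_1(\mathbf d):=\min\operatorname{spec}(\mathcal B)$.

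To obtain simplicity, a positive eigenfunction, and the strict gap $\operatorname{Re}\lambda>\lambda_1$ for all other eigenvalues, fix $\mu$ sufficiently large that $\mathcal B+\mu I$ is boundedly invertible and $\mu>\max_i\max_{x\in\overline\Omega_i}c_i(x)$, and set $R_\mu:=(\mathcal B+\mu I)^{-1}$. Because $\mathcal B$ has domain $\mathcal D(\mathcal A_u)$ and compact resolvent, $R_\mu$ maps $X_u$ boundedly into $\mathcal D(\mathcal A_u)\subset\prod_{i=1}^{n}C^1(\overline\Omega_i)$ with a uniform $\mathcal H_u^2$-estimate (Lemma \ref{semigroup} and its proof); therefore $R_\mu$ restricts to a compact operator on the ordered Banach space $E:=\prod_{i=1}^{n}C(\overline\Omega_i)$, whose positive cone has nonempty interior. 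Moreover, for $\mathbf g\in E$ the function $\Phi=R_\mu\mathbf g$ lies in $\prod_{i=1}^{n}C^2(\overline\Omega_i)$ (bootstrapping the equation) and solves $-d_i(\phi_i)_{xx}+(\mu-c_i)\phi_i=g_i$ in $\Omega_i$ together with \eqref{am3-b} and $(\phi_1)_x(0^{+})=(\phi_n)_x(L^{-})=0$; since $\mu-c_i>0$, Theorem \ref{parabolic} forces $\phi_i>0$ on $\overline\Omega_i$ for all $i$ whenever $\mathbf g>\mathbf 0$, so $R_\mu$ is strongly positive on $E$. The strong form of the Krein--Rutman theorem then gives that the spectral radius $r(R_\mu)>0$ is a simple eigenvalue of $R_\mu$ with eigenfunction $\Phi^*\gg\mathbf 0$, that it is the only eigenvalue of $R_\mu$ possessing a nonnegative nonzero eigenfunction, and that $|\sigma|<r(R_\mu)$ for every other eigenvalue $\sigma$. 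Transferring back through the bijection $\sigma\mapsto\lambda=\sigma^{-1}-\mu$ between the nonzero spectrum of $R_\mu$ and $\operatorname{spec}(\mathcal B)\subset(-\mu,\infty)$ (valid since $R_\mu$ is injective), we conclude that $\lambda_1=r(R_\mu)^{-1}-\mu$ is a simple eigenvalue of \eqref{am3} with positive eigenfunction $\Phi^*$, while any other eigenvalue $\lambda$ arises from some $\sigma\in(0,r(R_\mu))$ and hence satisfies $\operatorname{Re}\lambda=\lambda=\sigma^{-1}-\mu>\lambda_1$.

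Finally, for the continuity of $\lambda_1(\mathbf d)$ I would use the min--max principle. The form domain $\mathcal H_u^1$ of $\mathcal B=\mathcal B(\mathbf d)$ does not depend on $\mathbf d$ --- the density-jump constraint defining $\mathcal H_u^1$ involves the fixed $p_i$ only --- so
\begin{equation*}
\lambda_1(\mathbf d)=\inf_{\mathbf 0\neq\Phi\in\mathcal H_u^1}\frac{\displaystyle\sum_{i=1}^{n}\frac{1}{\prod_{j=1}^{i-1}p_j}\int_{\Omega_i}\bigl(d_i(\phi_i)_x^2-c_i\phi_i^2\bigr)\,{\rm d}x}{\displaystyle\sum_{i=1}^{n}\frac{1}{\prod_{j=1}^{i-1}p_j}\int_{\Omega_i}\phi_i^2\,{\rm d}x},
\end{equation*}
the infimum being attained at a principal eigenfunction (compact resolvent). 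Testing with the $\mathbf d$-independent $\Phi$ given by $\phi_i\equiv\prod_{j=1}^{i-1}p_j\in\mathcal H_u^1$ gives a uniform upper bound $\lambda_1(\mathbf d)\le\Lambda_0$; for $\mathbf d$ ranging over a compact $K\subset(0,\infty)^n$, coercivity of the numerator then bounds $\sum_{i=1}^{n}(\prod_{j=1}^{i-1}p_j)^{-1}\int_{\Omega_i}(\phi_i)_x^2\,{\rm d}x$ for the normalized minimizer uniformly; and since the numerator is affine in $\mathbf d$ with those controlled coefficients, comparing Rayleigh quotients at $\mathbf d$ and $\mathbf d'$ gives $|\lambda_1(\mathbf d)-\lambda_1(\mathbf d')|\le C_K\max_i|d_i-d_i'|$ for $\mathbf d,\mathbf d'\in K$, whence $\lambda_1$ is locally Lipschitz and in particular continuous in $\mathbf d$. (An equivalent route to the whole proposition is to first remove the density jumps via the change of variables $(\xi,w_i)$ of Theorem \ref{apt1}, turning \eqref{am3} into a classical divergence-form Sturm--Liouville problem on the fixed interval $[0,\xi_n]$ with Neumann conditions and with coefficients $D_i=d_i(\prod_{j=1}^{i-1}p_j)^2$ that depend continuously on $\mathbf d$, after which every assertion follows from standard one-dimensional spectral theory.) In a from-scratch proof the step demanding genuine care would be the strong positivity of $R_\mu$ across the interfaces --- which is exactly what Theorem \ref{parabolic} supplies.
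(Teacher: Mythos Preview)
Your proof is correct. The Krein--Rutman portion is essentially the paper's argument: both shift by a large constant so that the zero-order coefficient becomes positive, invert, invoke Theorem \ref{parabolic} for strong positivity of the resolvent on an ordered Banach space with solid cone, and transfer back. The paper works on the space $G$ of \eqref{spaceg} rather than your $E=\prod_i C(\overline\Omega_i)$, but this is cosmetic.

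The genuinely different part is continuity in $\mathbf d$. The paper shows that the resolvent $T(\mathbf d)$ is norm-continuous in $L(X_u)$: it first observes that the $T(\mathbf d^k)$ are collectively compact and self-adjoint, so (by Anselone's theory) norm convergence is equivalent to strong convergence; it then verifies strong convergence by an ODE shooting argument on the resolvent equation, and finally appeals to Kato's perturbation theory for isolated simple eigenvalues. Your route is shorter and sharper: you exploit that the form domain $\mathcal H_u^1$ carries only the density-jump conditions and is therefore independent of $\mathbf d$, so the Rayleigh quotient for $\lambda_1(\mathbf d)$ is an infimum over a fixed set of test functions with numerator affine in $\mathbf d$; testing with a piecewise constant gives a uniform upper bound, coercivity then controls the gradient of the normalized minimizer uniformly on compacta, and comparing Rayleigh quotients yields local Lipschitz continuity. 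This avoids collectively compact operators, shooting, and Kato entirely, and gives a quantitative modulus of continuity the paper does not obtain. The price is that your approach leans on self-adjointness (the min--max principle), whereas the paper's operator-convergence argument would adapt to non-symmetric perturbations.
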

\begin{proof}
Define $$\Lambda:=\ds\max_{1\le i\le n}\left \{\|c_i(x)\|_{C(\overline{\Omega}_i)}+1\right \}.$$ For $\mathbf g\in G$, where $G$ is defined in \eqref{spaceg}, we consider the following problem
	\begin{subequations}\label{am4}
		\begin{empheq}[left=\empheqlbrace]{align}
			&\ds -d_i (\phi_i)_{xx}+[\Lambda-c_i] \phi_i=g_i,\;\; \text{in}\;\; \Omega_i,\;\; i=1,2,\cdots,n, \label{am4-a} \\
			&\phi_{i+1}(x_i^{+})= p_i\phi_i(x_i^{-}), \quad d_{i+1}(\phi_{i+1})_{x}(x_i^{+})= d_i (\phi_{i})_{x}(x_i^{-}), \label{am4-b} \\
			&(\phi_{1})_{x}(0^{+})=(\phi_{n})_{x}(L^{-})=0. \label{am4-c}
		\end{empheq}
	\end{subequations}
Define
	\begin{equation}\label{Au-last}
		\widetilde{\mathcal{A}}_u \mathbf u := (-d_1(u_1)_{xx}+(\Lambda-c_1(x))u_1, \cdots, -d_n(u_n)_{xx}+(\Lambda-c_n(x))u_n),
	\end{equation}
and the domain of $\widetilde{\mathcal{A}}_u$ is given by:
\begin{equation}\label{tAu-last}
			\begin{aligned}
				\mathcal{D}(\widetilde{\mathcal{A}}_u) :=\, \{ &\mathbf u \in \mathcal{H}_u^2\cap \mathcal{H}_u^1:
				d_{i+1} (u_{i+1})_{x}(x_i^{+}) = d_i (u_i)_{x}(x_i^{-}),\, i=1,\cdots,n-1,\\
				&(u_1)_x(0^{+}) = (u_n)_x(L^{-}) = 0 \},
			\end{aligned}
		\end{equation}
Then using arguments similar to those in the proof of Lemma \ref{semigroup}, we conclude that $\widetilde{\mathcal{A}}_u$ is symmetric and that \eqref{am4} admits a unique solution $\Phi \in 	\mathcal{D}(\widetilde{\mathcal{A}}_u)$. Moreover,  $\Phi $ satisfies
 \begin{equation}\label{esti2}
 \| \Phi \|_{\mathcal{H}_u^{2}} \leq \mathcal{C}_1\| \mathbf g \|_{X_u}.
 \end{equation}
Here the constant $\mathcal{C}_1>0$ depends on $d_i$, $p_i$, $1/d_i$ and $1/p_i$ ($i=1,\cdots,n$).
Since $\mathcal{H}_u^2\xhookrightarrow{c} C(\overline{\Omega}_1)\times\cdots\times C(\overline{\Omega}_n)$  and  $G\xhookrightarrow {c}X_u$, it follows from  \eqref{esti2} that
\begin{equation*}
	\max_{1\le i\le n}\| \phi_i\|_{C^1(\overline{\Omega}_i)}\leq \mathcal{C}_{2} \| \mathbf g \|_{X_u}\leq \mathcal{C}_{3}\| \mathbf g \|_{G},
\end{equation*}
where the constants $\mathcal{C}_{2}, \,\mathcal{C}_{3} > 0$ depends on $d_i$, $p_i$, $1/d_i$ and $1/p_i$ ($i=1,\cdots,n$).  This combined with \eqref{am4-a} implies that $\Phi\in \mathcal{H}^3_{u}$ and
\begin{equation}\label{lastes}
	 \| \Phi \|_{\mathcal{H}_u^{3}} \le \mathcal{C}_4\| \mathbf g \|_{X_u},
\end{equation}
where the constant $\mathcal{C}_{4}>0$ depends on $d_i$, $p_i$, $1/d_i$ and $1/p_i$ ($i=1,\cdots,n$).
The embedding
$\mathcal{H}_u^3\xhookrightarrow{c} C^2(\overline{\Omega}_1)\times\cdots\times C^2(\overline{\Omega}_n)$ combined with \eqref{lastes} implies that
\begin{equation}\label{boundphi}
	\max_{1\le i\le n}\|\phi_i\|_{C^2(\overline{\Omega}_i)}\le \mathcal{C}_{5}\| \mathbf g \|_{G},
\end{equation}
where the constant $\mathcal{C}_{5}$ depends on $d_i$, $p_i$, $1/d_i$ and $1/p_i$ ($i=1,\cdots,n$).

For fixed $\mathbf d \gg\bm 0$, define the linear operator $T(\mathbf d): G \to G$ by $T(\mathbf d)\mathbf g = \Phi$, where $\Phi$ is the solution of \eqref{am4} corresponding to $\mathbf g$. Since $\Phi \in C^2(\overline{\Omega}_1)\times\cdots\times C^2(\overline{\Omega}_n) $ and  $C^2(\overline{\Omega}_1)\times\cdots\times C^2(\overline{\Omega}_n)  \xhookrightarrow{c}G$, we can deduce that $T(\mathbf d)$ is compact. Let $$\mathcal{K}:=\{\mathbf w \in G:w_i(x)\ge 0\;\;\text{in}\;\;\Omega_i\}$$ be the positive cone in $G$. This cone has nonempty interior $\mathring{\mathcal{K}}$ and satisfies $\mathcal{K} \cap (-\mathcal{K}) = \{ \mathbf 0 \}$. By Theorem \ref{parabolic}, we obtain that, if $\mathbf g \in \mathcal{K} \backslash \{ \mathbf 0 \}$, then $\Phi \in\mathring{\mathcal{K}}$, which implies that  $T(\mathbf d)(\mathcal{K} \backslash \{ \mathbf 0 \})\subset\mathring{\mathcal{K}}$. Then it follows from the Krein-Rutman theorem (see e.g., \cite[Theorem 1.2]{Du}) that the spectral radius $r (\mathbf d)$  is a simple eigenvalue of $T (\mathbf d)$  with a corresponding positive eigenfunction $\mathbf g^{*}$.
This implies that
\begin{equation*}
	-d_i (\phi_i^*)_{xx} + [\Lambda - c_i] \phi_i^* = \frac{\phi_i^*}{r(\mathbf d)} \quad \text{in } \Omega_i, \quad i = 1, 2, \cdots, n,
\end{equation*}
where $\mathbf\Phi^*=(\phi_1^*,\cdots,\phi_n^*)=T\mathbf g^{*}$ is positive.
Consequently, $\lambda_{1} (\mathbf d)= 1/r(\mathbf d) - \Lambda$ is a simple eigenvalue of \eqref{am3} with a positive eigenfunction $\Phi^{*}$.
Then, using arguments similar to those in the proof of \cite[Theorem 1.4]{Du}, we deduce that any eigenvalue $\lambda \neq \lambda_1$ of \eqref{am3} satisfies
$\operatorname{Re}(\lambda) > \lambda_{1}$.

Finally, we will show that $\la_1(\mathbf d)$ is continuous with respect to $\mathbf d$. By \eqref{esti2}, the operator $T(\mathbf d)$ is also bounded and compact on $X_u$.
We first show that that $T(\mathbf d)$ is continuous in $L(X_u)$, where $L(X_u)$ denotes the set of bounded linear operators on $X_u$. That is, for any sequence $\left\{\mathbf d^k\right\}_{k=1}^\infty$ satisfying
$\ds\lim_{k\to \infty}\mathbf d^k=\mathbf d$, we have
\begin{equation}\label{strong}
\ds\lim_{k\to \infty}\|T(\mathbf d^k)-T(\mathbf d)\|_{L(X_u)}=0,
\end{equation}
 where $\|\cdot\|_{L(X_u)}$ is the operator norm in $L(X_u)$.
Since the constant $\mathcal C_4$ in \eqref{esti2} depends only on $d_i$ and $1/d_i$ ($i=1,\cdots,n$), we see from \eqref{esti2} that
$$\bigcup_{k=1}^\infty\left\{T(\mathbf d^k)\mathbf g:\|\mathbf g\|_{X_u}\le1\right\}$$ is compact in $X_u$, and $\left\{\|T(\mathbf d^k)\|_{L(X_u)}\right\}_{k=1}^\infty$ is bounded. Consequently, $\{T(\mathbf d^k)\}_{k=1}^\infty$ is collectively compact. Since $\widetilde{\mathcal{A}}_u$ is symmetric, it follows that $T(\mathbf d^k)$ is self-adjoint. Then it follow from
\cite[Lemma 5.2 and Corollary 5.7]{1967Collectively} that \eqref{strong} holds if and only if
\begin{equation}\label{kin-2}
	\lim_{k\to\infty} T(\mathbf d^k)\mathbf g=T(\mathbf d)\mathbf g\;\;\text{in}\;\;X_u\;\;\text{for any fixed}\;\;\mathbf g\in X_u.
\end{equation}
Note that $\left\{\|T(\mathbf d^k)\|_{L(X_u)}\right\}_{k=1}^\infty$ is bounded, and $G$ is
dense in $X_u$. Therefore, to prove  \eqref{kin-2}, it suffices to show that
\begin{equation}\label{kin}
	\lim_{k\to\infty} T(\mathbf d^k)\mathbf g=T(\mathbf d)\mathbf g\;\;\text{in}\;\;G\;\;\text{for any fixed}\;\;\mathbf g\in G.
\end{equation}

 Now we construct an auxiliary function for further application. Let $\mathbf g=(g_1,\cdots,g_n)\in G$. For any given $a>0$, we inductively
solve the following ODEs. For $i=1$:
\begin{equation}\label{phi1}
\begin{cases}
	\ds\frac{{\rm d} \phi_1}{{\rm d }x}=\psi_1, &x\in(0,x_1),\\[10pt]
	\ds\frac{{\rm d} \psi_1}{{\rm d }x}=\dfrac{1}{d_1}\left[(\Lambda-c_1) \phi_1-g_1\right],&x\in(0,x_1),\\[10pt]
\phi_{1}(0^{+})= a, \;\;\psi_1(0^{+})=0.
\end{cases}
\end{equation}
For $i=2,\cdots,n$:
\begin{equation}\label{phii}
	\begin{cases}
		\ds\frac{{\rm d} \phi_i}{{\rm d }x}=\psi_i,&x\in (x_{i-1},x_i),\\[10pt]
		\ds\frac{{\rm d} \psi_i}{{\rm d }x}=\dfrac{1}{d_i}\left[(\Lambda-c_i) \phi_i-g_i\right],&x\in (x_{i-1},x_i),\\[10pt]
		\phi_{i}(x_{i-1}^{+})=p_{i-1} \phi_{i-1}(x_{i-1}^{-}),\;\;\psi_i(x_{i-1}^{+})=\dfrac{d_{i-1}}{d_i}\psi_{i-1}(x_{i-1}^{-}).
	\end{cases}
\end{equation}
Define the auxiliary  function  $\mathcal T:(0,\infty)\times \mathbb R^n \to \mathbb R$ by $\mathcal T(a,\mathbf d):=\psi_n(L)$.

By the continuous dependence of solutions to ODEs on initial values and parameters, we see that $\mathcal T(a,\mathbf d)$ is continuous with respect to $(a,\mathbf d)$.
Since \eqref{am4} has a unique solution $\Phi=(\phi_1,\cdots,\phi_n)\in G$ , it follows that $\mathcal T(a,\mathbf d)=0$ admits a unique solution $a(\mathbf d)$ for fixed $\mathbf d$. We now prove that
\begin{equation}\label{adp}
\ds\lim_{k\to\infty} a(\mathbf d^k)=a(\mathbf d).
\end{equation}
Since the constant $\mathcal C_5$ in \eqref{boundphi} depends only on $d_i$ and $1/d_i$($i=1,\cdots,n$), it follows from \eqref{boundphi} that $\left\{a(\mathbf{d}^k)\right\}_{k=1}^{\infty}$ is bounded. Consider any convergent subsequence $\{a(\mathbf{d}^{k_j})\}_{j=1}^{\infty}$ with limit $a^*$. Since $\mathcal{T}(a(\mathbf{d}^{k_j}), \mathbf{d}^{k_j}) = 0$ for all $j$ and $\mathcal{T}(a,\mathbf d)$ is continuous with respect to $(a,\mathbf d)$, taking the limit as $j \to \infty$ yields $\mathcal{T}(a^*, \mathbf{d}) = 0$. The uniqueness of the solution to $\mathcal{T}(a, \mathbf{d}) = 0$ then implies that $a^* = a(\mathbf{d})$. Thus, \eqref{adp} holds. Note that
\begin{equation*}
\left(T(\mathbf d)\mathbf g, \ds\frac{{\rm d}  T(\mathbf d)\mathbf g}{{\rm d} x}\right)
\end{equation*}
is the solution of \eqref{phi1}-\eqref{phii} for $a=a(\mathbf{d})$.
Similarly,
\begin{equation*}
\left(T(\mathbf d^k)\mathbf g, \ds\frac{{\rm d}T(\mathbf d^k) \mathbf g}{{\rm d} x}\right )
\end{equation*}
is the solution of \eqref{phi1}-\eqref{phii} for $\mathbf d=\mathbf d^k$ and  $a=a(\mathbf{d}^k)$.
By the continuous dependence of solutions to ODEs on initial values and parameters, we see from \eqref{adp} that \eqref{kin} holds. Consequently, $T(\mathbf d)$ is continuous in $L(X_u)$ with respect to $\mathbf d$.

Note from \cite[Chapter III, Section 3.5]{43-Kato-1966} that a simple and isolated eigenvalue depends continuously on the operator.
Since the spectral radius $r(\mathbf{d})$ is a simple and isolated eigenvalue of $T(\mathbf d)$ on $G$ (and hence $X_u$), it follows that
 $r(\mathbf{d})$ is continuous with respect to $\mathbf{d}$, and consequently
$\la_1(\mathbf{d})$ is continuous with respect to $\mathbf{d}$.  This completes the proof.
\end{proof}

\end{appendices}
\section*{Acknowledgments}

This work was supported by National Natural Science Foundation of China (No. 12171117), Taishan Scholars Program of Shandong Province (No. tsqn 202306137) and Shandong Provincial Natural Science Foundation of China (No.
ZR2020YQ01).

\end{document}